\definecolor{mycolor}{HTML}{163680}
\newcommand{\obar}[1]{\mkern 1.5mu\overline{\mkern-1.5mu#1\mkern-1.5mu}\mkern 1.5mu}
\newcommand{\otild}[1]{\mkern 1.5mu\tilde{\mkern-1.5mu#1\mkern-1.5mu}\mkern 1.5mu}
\newcommand{\sources}{S} 
\newcommand{\sinks}{D} 
\newcommand{\sink}{d}
\newcommand{\dur}{T} 
\newcommand{\csupplies}{\varsigma} 
\newcommand{\dfuns}{\eta} 
\newcommand{\demands}{\kappa} 
\newcommand{\TIntervals}{\varrho} 
\newcommand{\prodcpus}{\psi} 
\newcommand{\supplies}{\sigma} 
\newcommand{\caps}{u} 
\newcommand{\outfuns}{\gamma} 
\newcommand{\InArcs}[1]{\delta_{#1}^{-}} 
\newcommand{\OutArcs}[1]{\delta_{#1}^{+}} 
\newcommand{\vin}[1]{y_{\InArcs{#1}}} 
\newcommand{\vout}[1]{x_{\OutArcs{#1}}} 
\newcommand{\val}[2][]{\nu_{#1}(#2)} 
\newcommand{\OPT}[1][]{p^*_{#1}} 
\newcommand{\bits}[1]{\text{bits}[#1]} 
\newcommand{\InsMap}{\mathcal{A}} 
\newcommand{\SolMap}{\mathcal{B}} 
\newcommand{\ssource}{{s^*}} 
\newcommand{\costs}{c} 
\newcommand{\clin}{\alpha} 
\newcommand{\cquad}{\beta} 
\newcommand{\outscaler}{\mu} 
\newcommand{\MaxGradNorm}{\Gamma}
\newcommand{\MaxPointNorm}{R}
\newcommand{\InitialGuarantor}{Q}
\newcommand{\IneqPerturb}{\omega}
\newcommand{\EqPerturb}{\rho}
\newcommand{\given}{\ \big| \ }
\newcommand{\FeasSetHardCP}{\mathcal{F}_3}
\newcommand{\FeasSetCP}{\mathcal{F}_4}
\newcommand{\OptSetHardCP}{\mathcal{O}_3}
\newcommand{\OptSetCP}{\mathcal{O}_4}
\newcommand{\CumPurger}{H}
\newcommand{\MinTIntervalLen}{\abs{\TIntervals_{min}}}
\newcommand{\vini}[1]{y_{\InArcs{#1}i}}
\newcommand{\vouti}[1]{x_{\OutArcs{#1}i}}
\newcommand{\MinGrad}{\iota}
\newcommand{\MaxDemandsSlope}{\Omega}
\newcommand{\PieceLengths}{l}
\newcommand{\slack}{^*s}
\newcommand{\AuxSpace}{B_\varepsilon^+}
\newcommand{\AuxFunI}{f_\varepsilon}
\newcommand{\AuxFunII}{f_\varepsilon^+}
\newcommand{\AuxFunIII}{F_\varepsilon^+}
\newcommand{\AuxConst}{\Omega(\varepsilon)}
\newcommand{\FunctionsBound}{W}
\newcommand{\FunOPT}{\tilde{p}^*} 
\newcommand{\DeltaOPT}[1]{\Delta_{#1}} 
\newcommand\at[2]{\left.#1\right|_{#2}}
\DeclarePairedDelimiter\ceil{\lceil}{\rceil}
\DeclareMathOperator*{\argmax}{arg\,max}
\DeclareMathOperator*{\argmin}{arg\,min}
\newcommand{\BigOh}{\mathcal{O}}
\renewcommand{\a}{\text{ and }}
\newcommand{\Id}{\text{Id}} 
\newcommand{\domain}[1]{\text{domain}(#1)} 
\newcommand{\image}[1]{\text{Im}(#1)} 
\newcommand{\interior}[1]{\text{int}(#1)} 
\newcommand{\Bern}[1]{\text{Bern}(#1)} 
\newcommand{\Bin}[1]{\text{Bin}(#1)} 
\newcommand{\Unif}[1]{\text{Unif}(#1)} 
\newcommand{\Beta}[1]{\text{Beta}(#1)} 
\newcommand{\BetaConstant}[1]{\text{B}(#1)} 
\newcommand{\N}{\mathbb{N}} 
\newcommand{\R}{\mathbb{R}} 
\newcommand{\extR}{\obar{\R}} 
\DeclarePairedDelimiter\abs{\lvert}{\rvert} 
\DeclarePairedDelimiter\norm{\lVert}{\rVert} 
\newtheorem{theorem}{Theorem}
\newtheorem{lemma}{Lemma}
\newtheorem{proposition}{Proposition}
\newtheorem{corollary}{Corollary}
\newtheorem{definition}{Definition}
\newtheorem{problem}{Problem}
\newtheorem{program}{Program}
\newtheorem{example}{Example}
\crefname{problem}{Problem}{Problems}
\crefname{program}{Program}{Programs}
\tikzset{
    source/.style={circle, draw=green!60, fill=green!5, very thick, minimum size=7mm},
    sink/.style={circle, draw=red!60, fill=red!5, very thick, minimum size=7mm},
    trans/.style={rectangle, draw=gray!60, fill=gray!5, very thick, minimum size=5mm},
    bus/.style={circle, draw=blue!60, fill=blue!5, very thick, minimum size=7mm},
}
\begin{document}

\pagenumbering{roman}

\begin{titlepage}
    \noindent\ETHlogo[2in]\\
    \vspace{0pt plus 0.7fill}
        \begin{center}
            {\LARGE \textbf{Solving Min-Cost Concave Generalized Dynamic Flows and Approximating Dynamic Optimal Power Flows}}
            
            \vspace{2.5cm}
            Master's Thesis\\
            \vspace{0.2cm}
            \href{https://www.linkedin.com/in/jacob-h-rothschild/}{Jacob H. Rothschild}\\
            \vspace{0.2cm}
            September 2023
                    
        \end{center}
        \vfill\par
        \begin{flushright}
            Spring Semester 2023\\
            \vspace{0.2cm}
            Advisor: Dr. Laura Vargas Koch\\
            \vspace{0.2cm}
            Professor: Prof. Dr. Rico Zenklusen\\
            \vspace{0.2cm}
            Institute for Operations Research\\
            \vspace{0.2cm}
            Department of Mathematics, ETH Z\"urich
        \end{flushright}
\end{titlepage}

\acresetall
\begin{abstract}        
    Assuming power travels instantaneously, can be steered by us, and is lost quadratically in each power line, the \acl{DOPF} problem simplifies to a \ac{MCDGFWQL} problem. As this is a special case of the \ac{MCCDGF} problem, we derive both general results for the \ac{MCCDGF} problem and stronger results for the \ac{MCDGFWQL} problem.
    
    Our main contribution is a \acl{FPTAS} for a mild restriction of the \ac{MCDGFWQL} problem. We also implement and benchmark a slight modification of this algorithm, finding it to be efficient in practice, with a slightly superlinear and possibly subquadratic dependence of execution time on the edge count of the input graph.
    
    Our secondary contributions are the derivation of a reduction from the \ac{MCCDGF} problem to a \acl{CP} and a corresponding sensitivity analysis of the \ac{MCCDGF} problem. We also provide a brief comparison of our \acl{DOPF} simplification and the classic direct current simplification and point out some interesting directions for future research.
   \end{abstract}
\acresetall

\chapter*{Acknowledgements}
I would like to express my sincerest gratitude to Dr.\ Laura Vargas Koch for excellent supervision throughout this thesis and thank her for all of our interesting discussions.

\hypersetup{linkcolor=black}
\tableofcontents
\listoftables
\begingroup
    \let\clearpage\relax
    \listofalgorithms
\endgroup
\pagebreak
\hypersetup{linkcolor=mycolor}

\pagenumbering{arabic}

\acresetall
\chapter{Introduction}\label{Ch:Introduction}
    \pagenumbering{arabic}
    To properly function, substantial parts of modern society require electricity to be instantaneously available at all times \citep{meier2006electric}. It is hence critical that we never fail to satisfy all electricity demands. As our total electricity production leads to significant direct and indirect costs \citep{nrc2010hidden, iea2023co2}, we however want to not just satisfy these demands but to also minimize the cost with which we do so. This is the problem of \ac{DOPF} \citep{gill2014dynamic}. As electrical power must abide by the laws of physics, the \ac{DOPF} problem is a non-convex optimization problem, and it is in general strongly NP-hard \citep{bienstock2019strong}. It is therefore worthwhile to look at ways to simplify it.

One such simplification, as used in \citep{hernandez2023pollution}, is to assume power travels instantaneously, incurs a quadratic loss in each power line, and can be freely steered through the power grid. The \ac{DOPF} problem then becomes a min-cost dynamic generalized flow problem with quadratic losses. To the best of our knowledge, such flow problems have not been studied.

Hence, the main contribution of this thesis is to, under a set of mild assumptions, develop a practically efficient \acf{FPTAS} for such flow problems. More specifically, \cref{Ch:Solution} develops this \ac{FPTAS}, \cref{Ch:Sensitivity} studies the sensitivity of the flow problem, \cref{Ch:Implementation} implements and benchmarks the \ac{FPTAS} in practice, while \cref{Ch:Extensions} looks at some generalizations.
    \section{Background}\label{S:Background} We will in this section give a quick summary of the relevant definitions leading up to dynamic generalized flows. For a more thorough survey, the interested reader is referred to \cite{aronson1989survey}.

In the context of flows, the most basic construct is the static $\sources$-$\sinks$ flow, where flow is produced in $\sources$, consumed in $\sinks$, and preserved everywhere else.

\begin{definition}[Static flow]
    Given a directed graph $G=(V,A)$ with non-negative arc capacities $\caps \in \R_{\geq 0}^{A}$ and $\sources,\sinks \subseteq V$, a static $\sources$-$\sinks$ flow is a set $x \in \R_{\geq 0}^{A}$ such that
    \begin{equation*}
    \begin{array}{r@{\;}ll}
        x_a &\leq \caps_a, \quad &\forall{a \in A}, \\
        \vout{s} - x_{\InArcs{s}} &\geq 0, \quad &\forall{s \in \sources}, \\
        \vout{\sink} - x_{\InArcs{\sink}} &\leq 0, \quad &\forall{\sink \in \sinks}, \\
        \vout{v} - x_{\InArcs{v}} &= 0, \quad &\forall{v \in V \setminus (\sources \cup \sinks)}.
    \end{array}
    \end{equation*}
\end{definition}

If $\sources=\{s\}$ and $\sinks=\{\sink\}$, then this is also called a static $s$-$d$ flow, while if $\sources$ and $\sinks$ are clear from context it is simply called a static flow. If, moreover, every source $s \in \sources$ has some non-negative capacity $\supplies_s$, every sink $\sink \in \sinks$ has some non-negative demand $\demands_s$, and we are given some cost function $\prodcpus$, then we can define the min-cost static $\sources$-$\sinks$ problem.

\begin{problem}[Min-cost static flow]
    Given: A directed graph $G=(V,A)$ with arc capacities $\caps \in \R_{\geq 0}^{A}$, cost function $\prodcpus: \R_{\geq 0}^{A} \to \R$, source supplies $\supplies \in \R_{\geq 0}^{\sources}$, and sink demands $\demands \in \R_{\geq 0}^{\sinks}$.

    Task: Find a static $\sources$-$\sinks$ flow $x \in \R_{\geq 0}^{A}$ of minimum $\prodcpus(x)$ that satisfies
    \begin{equation*}
    \begin{array}{r@{\;}ll}
        \vout{s} - x_{\InArcs{s}} &\leq \supplies_s, \quad &\forall{s \in \sources}, \\
        \vout{\sink} - x_{\InArcs{\sink}} &= \demands_\sink, \quad &\forall{\sink \in \sinks}.
    \end{array}
    \end{equation*}
\end{problem}

In general, if $f_0$ is the objective function of some problem instance $(f)$ and $x$ is some point in $\domain{f_0}$, then we denote
\begin{equation}
\begin{split}
    \val[(f)]{x} &:= f_0(x), \\
    \OPT[(f)] &:= \min\{\val[(f)]{x'} \given x' \text{ satisfies all the constraints of } (f)\}.
\end{split}
\end{equation}
Alternatively, if the instance is clear from context, we will sometimes drop the subscript.

In a generalized flow, the flow coming out of an arc $a$ need not equal the flow going into $a$, but is instead determined by a function $\outfuns_a$.

\begin{definition}[Generalized flow]
    Given a directed graph $G=(V,A)$ with non-negative arc capacities $\caps \in \R_{\geq 0}^{A}$, out-flow functions $\outfuns: (\R_{\geq 0} \to \R_{\geq 0})^{A}$ and $\sources,\sinks \subseteq V$, a (static) generalized $\sources$-$\sinks$ flow is a tuple $z=(x,y)$ with $x,y \in \R_{\geq 0}^{A}$ such that
    \begin{equation*}
    \begin{array}{r@{\;}ll}
        y_a &= \outfuns_a(x_a), \quad &\forall{a \in A}, \\
        x_a,y_a &\leq \caps_a, \quad &\forall{a \in A}, \\
        \vout{s} - \vin{s} &\geq 0, \quad &\forall{s \in \sources}, \\
        \vout{\sink} - \vin{\sink} &\leq 0, \quad &\forall{\sink \in \sinks}, \\
        \vout{v} - \vin{v} &= 0, \quad &\forall{v \in V \setminus (\sources \cup \sinks)}.
    \end{array}
    \end{equation*}
\end{definition}

This allows us to trivially generalize the min-cost static flow problem to the min-cost (static) generalized flow problem.

\begin{problem}[Min-cost generalized flow]
    Given: A directed graph $G=(V,A)$ with arc capacities $\caps \in \R_{\geq 0}^{A}$, out-flow functions $\outfuns: (\R_{\geq 0} \to \R_{\geq 0})^{A}$, cost function $\prodcpus: \R_{\geq 0}^{A} \to \R$, source supplies $\supplies \in \R_{\geq 0}^{\sources}$, and sink demands $\demands \in \R_{\geq 0}^{\sinks}$.

    Task: Find a (static) generalized $\sources$-$\sinks$ flow $z=(x,y)$ with $x,y \in \R_{\geq 0}^{A}$ of minimum $\prodcpus(z)$ that satisfies
    \begin{equation*}
    \begin{array}{r@{\;}ll}
        \vout{s} - \vin{s} &\leq \supplies_s, \quad &\forall{s \in \sources}, \\
        \vout{\sink} - \vin{\sink} &= \demands_\sink, \quad &\forall{\sink \in \sinks}.
    \end{array}
    \end{equation*}
\end{problem}

So far we have looked at static flows, which can be thought of as either capturing an instant in time or some infinite-horizon stable state. If we have cumulative capacities and/or time-dependent network parameters, then the temporal component becomes non-trivial. Hence, it then makes sense to make our flows functions of time. To avoid measure-theoretic details, we demand that these functions are piecewise uniformly continuous with non-zero piece lengths. To avoid unnecessary complexity, we will here limit ourselves to only allowing the demand to depend on time and to the only cumulative capacities being on flow generation at sources. Generalizing generalized flows in this way then gives us dynamic generalized flows.

\begin{definition}[Dynamic generalized flow]
    Given a duration $\dur$ and a directed graph $G=(V,A)$ with non-negative arc capacities $\caps \in \R_{\geq 0}^{A}$, out-flow functions $\outfuns: (\R_{\geq 0} \to \R_{\geq 0})^{A}$, $\sources,\sinks \subseteq V$, and cumulative source capacities $\csupplies \in \R_{\geq 0})^{\sources}$, a dynamic generalized $\sources$-$\sinks$ flow is a tuple $z=(x,y)$ where $x,y \in ([0,\dur] \to \R_{\geq 0})^{A}$ are piecewise uniformly continuous with non-zero piece lengths and satisfy
    \begin{equation*}
    \begin{array}{r@{\;}ll}
        y_a(t) &= \outfuns_a(x_a(t)), \quad &\forall{(a,t) \in A \times [0,\dur]}, \\
        x_a(t),y_a(t) &\leq \caps_a, \quad &\forall{(a,t) \in A \times [0,\dur]}, \\
        \int_0^{\dur}\vout{s}(t) - \vin{s}(t) dt &\leq \csupplies_s, \quad &\forall{s \in \sources}, \\
        \vout{s}(t) - \vin{s}(t) &\geq 0, \quad &\forall{(s,t) \in \sources \times [0,\dur]}, \\
        \vout{\sink}(t) - \vin{\sink}(t) &\leq 0, \quad &\forall{(\sink,t) \in \sinks \times [0,\dur]}, \\
        \vout{v}(t) - \vin{v}(t) &= 0, \quad &\forall{(v,t) \in (V \setminus (\sources \cup \sinks)) \times [0,\dur]}.
    \end{array}
    \end{equation*}
\end{definition}

This then allows further generalizing the min-cost generalized flow problem to the min-cost dynamic generalized flow problem.

\begin{problem}[Min-cost dynamic generalized flow]
    Given: A duration $\dur$ and a directed graph $G=(V,A)$ with arc capacities $\caps \in \R_{\geq 0}^{A}$, out-flow functions $\outfuns: (\R_{\geq 0} \to \R_{\geq 0})^{A}$, cost functions $\costs: ([0,\dur] \to \R_{\geq 0})^{A} \to \R$, production rate capacities $\supplies \in \R_{\geq 0}^{\sources}$, cumulative production capacities $\csupplies \in \R_{\geq 0})^{\sources}$, and sink demand functions $\dfuns \in ([0,\dur] \to \R_{\geq 0})^{\sinks}$.

    Task: Find a dynamic generalized $\sources$-$\sinks$ flow $z=(x,y)$ of minimum $\costs(z)$ that satisfies
    \begin{equation*}
    \begin{array}{r@{\;}ll}
        \vout{s}(t) - \vin{s}(t) &\leq \supplies_s, \quad &\forall{(s,t) \in \sources \times [0,\dur]}, \\
        \vout{\sink}(t) - \vin{\sink}(t) &= \demands_\sink, \quad &\forall{(\sink,t) \in \sinks \times [0,\dur]}.
    \end{array}
    \end{equation*}
\end{problem}

Given a similar graph and temporal aspect, we can also define a production schedule.

\begin{definition}[Production schedule]
    Given a duration $\dur$ and a directed graph $G=(V,A)$ with $\sources,\sinks \subseteq V$, production rate capacities $\supplies \in \R_{\geq 0}^{\sources}$, and cumulative production capacities $\csupplies \in \R_{\geq 0})^{\sources}$, a production schedule is a set of functions $p \in ([0,\dur] \to \R_{\geq 0})^{\sources}$ that satisfies
    \begin{equation*}
    \begin{array}{r@{\;}ll}
        p_s(t) &\leq \supplies_s, \quad &\forall{(s,t) \in \sources \times [0,\dur]}, \\
        \int_0^{\dur}p_s(t)dt &\leq \csupplies_s, \quad &\forall{s \in \sources}.
    \end{array}
    \end{equation*}
\end{definition}

Finally, if $G$ is instead an undirected graph $(V,E)$, then a flow should be free to choose its direction on each edge, but not to flow in both directions at once. To capture this, we hence define a flow on an undirected graph $G=(V,E)$ as a flow on its directed version $G'=(V,A)$ that does not utilize any two antiparallel arcs of $G'$ at once. By $G'$ being the direction version of $G$, we mean that $A$ contains two antiparallel arcs for every edge in $E$.

    \section{Problem statements}\label{S:Problem} Real-world power grids are highly complex systems, but can for our purposes be modelled as an undirected simple graph $G=(V,E)$, where every node has some supply and demand and an edge $\{u,v\} \in E$ represents all the power lines directly connecting nodes $u$ and $v$ \citep{meier2006electric}. As previously mentioned, we make the simplifying assumptions that power travels instantaneously and with quadratic losses and that we can control how the outflow of a node is distributed among its incident edges. As $\sources=\sinks=V$, we can then model the \ac{DOPF} problem as a min-cost dynamic generalized $V$-$V$ flow problem with quadratic losses. We capture these quadratic losses by the set of parameters $r$, assumed to be constant over time:
\begin{equation}
    y_e(t) = \outfuns_e(x_e(t)) := x_e(t) - r_e x_e(t)^2, \ \forall t.
\end{equation}

In real power grids, every transmission line has a thermal rating upper bounding how much current can safely pass through it for an extended period of time without the line overheating. To capture this with our model, every edge $e \in E$ is given some capacity $\caps_e$, which we assume to be constant over time and small enough that $\outfuns_e$ is strictly increasing on $[0,\caps_e]$. To capture production costs and capacities, we assume that every node $v \in V$ either has a limit $\supplies_v$ on its production rate or a limit $\csupplies_v$ on its cumulative production, but not both. Such a node $v$ then has a marginal production cost function $\prodcpus_v$, which is positive, non-decreasing, constant over time, and piecewise constant with finitely many pieces. If $v$ has a limit on its production rate, then we assume that its marginal production cost also depends on its production rate, with $\domain{\prodcpus_v}=[0,\supplies_v]$. If $v$ instead has a limit on its cumulative production, then we correspondingly assume its marginal production cost at some time $t$ depends on its cumulative production in $[0,t]$ and that $\domain{\prodcpus_v}=[0,\csupplies_v]$. Finally, every node $v \in V$ is assumed to have some demand function $\dfuns_v$. $\dfuns$ is a function of time, and is assumed to be piecewise constant with non-zero piece lengths. This then allows us to define what we will call an \ac{UPGG} as a
\begin{equation}
    G = (V,E,r,\caps,\supplies,\csupplies,\dfuns,\prodcpus)
\end{equation}
where all parameters follow the assumptions just described. By relying on this definition, \cref{P:ConstantOP} captures our problem statement.

\begin{problem}[Simplified \ac{DOPF} with piecewise constant demands]\label{P:ConstantOP}
    Given: a duration $\dur\in\R_+$, and an \ac{UPGG}
    \begin{equation*}
        G=(V,E,r,\caps,\supplies,\csupplies,\dfuns,\prodcpus),
    \end{equation*}
    with
    \begin{equation*}
        \dfuns_\sink: [0,\dur] \ni t \mapsto
        \begin{cases}
            \demands_{\sink 1}, &\text{if } t \in \TIntervals_1 \subseteq [0,\dur]\\
            \vdots &\\
            \demands_{\sink k}, &\text{if } t \in \TIntervals_k \subseteq [0,\dur]\\
        \end{cases}
        \qquad \forall{\sink \in \sinks}.
    \end{equation*}
    
    Task: find a production schedule and a corresponding generalized dynamic $V$-$V$ flow over $[0,\dur]$ such that all demands are satisfied and the total production cost is minimized.
\end{problem}

\subsection{Simplifying this problem}

While \cref{P:ConstantOP} captures our problem and assumptions well, it can be cumbersome to analyze. Hence, we will instead define and analyze \acp{SPGG}. Compared to an \ac{UPGG}, an \ac{SPGG}
\begin{equation}
    G = (V,A,r,\caps,\csupplies,\dfuns,\prodcpus)
\end{equation}
has the undirected edges $E$ replaced by directed arcs $A$, all supplies and marginal production costs in terms of cumulative production, and $\sources,\sinks \subsetneq V$ with $\sources \cap \sinks = \emptyset$. Moreover, without loss of generality, we for the \ac{SPGG} assume that every node is reachable from $\sources$ and that every source has no in-arcs and every sink has no out-arcs. For our analysis we then essentially redefine \cref{P:ConstantOP} on \acp{SPGG}, arriving at \cref{P:SimpleConstant}.

\begin{problem}[Min-cost dynamic generalized $\sources$-$\sinks$ flow with quadratic losses]\label{P:SimpleConstant}
    Given: a duration $\dur\in\R_+$, and an \ac{SPGG}
    \begin{equation*}
        G=(V,A,r,\caps,\csupplies,\dfuns,\prodcpus),
    \end{equation*}
    with
    \begin{equation*}
        \dfuns_\sink: [0,\dur] \ni t \mapsto
        \begin{cases}
            \demands_{\sink 1}, &\text{if } t \in \TIntervals_1 \subseteq [0,\dur]\\
            \vdots &\\
            \demands_{\sink k}, &\text{if } t \in \TIntervals_k \subseteq [0,\dur]\\
        \end{cases}
        \qquad \forall{\sink \in \sinks}.
    \end{equation*}
    
    Task: find a production schedule and a corresponding generalized dynamic $\sources$-$\sinks$ flow over $[0,\dur]$ such that all demands are satisfied and the total production cost is minimized.
\end{problem}

As \cref{P:ConstantOP} and \cref{P:SimpleConstant} are on fundamentally different graphs, analyzing \cref{P:SimpleConstant} instead of \cref{P:ConstantOP} may at first seem like a strange choice. As we will see in \cref{SS:SolvingOriginal}, the results we obtain for \cref{P:SimpleConstant} however turn out to be highly applicable to \cref{P:ConstantOP}.

\subsection{Generalizing this simplification}

To achieve a higher generality in our analysis, much of it will be performed on a more general graph which we will call a \ac{GPGG}. Compared to an \ac{SPGG}, a \ac{GPGG}
\begin{equation}
    G = (V,A,\outfuns,\caps,\csupplies,\dfuns,\prodcpus)
\end{equation}
only requires that $\prodcpus$ is non-decreasing and piecewise twice differentiable with finitely many such pieces and that $\outfuns_a$ is concave, non-negative, strictly increasing and upper bounded by $\Id$. \cref{P:GeneralizedConstant} captures the naive generalization of \cref{P:SimpleConstant} to \acp{GPGG}.

\begin{problem}[Min-cost concave dynamic generalized flow] \label{P:GeneralizedConstant}
    Given: a duration $\dur\in\R_+$, and a \ac{GPGG}
    \begin{equation*}
        G=(V,A,\outfuns,\caps,\csupplies,\dfuns,\prodcpus),
    \end{equation*}
    with
    \begin{equation*}
        \dfuns_\sink: [0,\dur] \ni t \mapsto
        \begin{cases}
            \demands_{\sink 1}, &\text{if } t \in \TIntervals_1 \subseteq [0,\dur]\\
            \vdots &\\
            \demands_{\sink k}, &\text{if } t \in \TIntervals_k \subseteq [0,\dur]\\
        \end{cases}
        \qquad \forall{\sink \in \sinks}.
    \end{equation*}
    
    Task: find a production schedule and a corresponding generalized dynamic $\sources$-$\sinks$ flow over $[0,\dur]$ such that all demands are satisfied and the total production cost is minimized.
\end{problem}

Please note that the assumptions used everywhere that only $\dfuns$ is time-dependent is for notational convenience rather than necessity. All the results, reductions and arguments of this thesis trivially extend to the case where also $\outfuns$ and $\caps$ are piecewise constant over time with non-zero piece lengths. However, such a generalization would make our notation more cumbersome, which is why we make the stronger assumption of constancy over time.

\subsection{Formulating Problem \ref{P:GeneralizedConstant} as a mathematical program}

We denote by $x: [0,\dur] \to \R_{\geq0}^A$ and $y: [0,\dur] \to \R_{\geq0}^A$ the in-flow and out-flow rates of all arcs respectively, and seek to formalize \cref{P:GeneralizedConstant} in terms of a mathematical program on $x$ and $y$. From the definition of generalized flows, we directly get the constraint
\begin{equation}\label{E:InOutRelationship}
    y_a(t)=\outfuns_a(x_a(t)),\ \forall{(a,t) \in A \times [0,\dur]}.
\end{equation}

Similarly, strong flow conservation is equivalent to
\begin{equation}
    \vin{v}=\vout{v},\ \forall{v \in V \setminus (S \cup \sinks)},
\end{equation}
where
\begin{align}
    \vin{v}&:=\sum_{a\in\InArcs{v}}{y_a} \\
    \vout{v}&:=\sum_{a\in\OutArcs{v}}{x_a}.
\end{align}

Thus, if we additionally assume that
\begin{equation}\label{E:xyDomain}
    \begin{cases}
        0 \leq x_a(t) \leq \caps_a\\
        0 \leq y_a(t) \leq \caps_a
    \end{cases} \quad
    \forall{(a,t) \in A \times [0,\dur]},
\end{equation}
and that every $x_a$ and $y_a$ is piecewise uniformly continuous on $[0,\dur]$ with non-zero piece lengths, then $x,y$ is a generalized flow on $G$. As
\begin{equation*}
    \outfuns_a \leq \Id,\ \forall{a \in A},
\end{equation*}
we know by \eqref{E:InOutRelationship} that
\begin{equation*}
    y_a \leq x_a,\ \forall{a \in A},
\end{equation*}
thus that \eqref{E:xyDomain} is equivalent to
\begin{equation}
    \begin{cases}
        y_a(t) \geq 0 \\
        x_a(t) \leq \caps_a
    \end{cases} \quad
    \forall{(a,t) \in A \times [0,\dur]}.
\end{equation}

Finally, satisfying the demands is equivalent to
\begin{equation}
    \vin{\sink}=\dfuns_\sink,\ \forall{\sink\in\sinks},
\end{equation}
while the objective function is the sum of cumulative costs incurred at all sources:
\begin{equation}
    \sum_{s \in \sources}\int_0^{\int_0^{\dur}{\vout{s}(t)dt}}{\prodcpus_s(\alpha)d\alpha}.
\end{equation}
For this to be well-defined, we need that
\begin{equation}
    \int_0^{\dur}{\vout{s}(t)dt} \leq \csupplies_s,\ \forall{s \in \sources}.
\end{equation}

We have thus arrived at our first mathematical formulation of \cref{P:GeneralizedConstant}, in the form of \cref{M:OP}.

\begin{program}[Min-cost concave dynamic generalized flow]\label{M:OP}
Given: $G$ as in \cref{P:GeneralizedConstant}.
\begin{mini*}{x,y}{\sum_{s \in \sources}\int_0^{\int_0^{\dur}{\vout{s}(t)dt}}{\prodcpus_s(\alpha)d\alpha}}{}{}
    \addConstraint{y_a}{=\outfuns_a(x_a),}{a \in A}
    \addConstraint{x_a}{\leq \caps_a,}{a \in A}
    \addConstraint{y_a}{\geq 0,}{a \in A}
    \addConstraint{\vin{\sink}}{=\dfuns_\sink,}{\sink\in\sinks}
    \addConstraint{\vin{v}}{=\vout{v},}{v \in V \setminus (\sources \cup \sinks)}
    \addConstraint{\int_0^{\dur}{\vout{s}(t)dt} }{\leq \csupplies_s,}{s \in \sources}
    \addConstraint{x_a,y_a\text{ p.w. unif. }}{\text{cont. with non-null pieces}, \quad}{a \in A}
\end{mini*}
\end{program}

Note how all but the last set of inequalities are between functions over $[0,\dur]$, and thus have to hold for every $t\in[0,\dur]$ individually.

\subsection{Outlook}

In the next two sections we will under a mild assumption derive a \acf{FPTAS} for \cref{P:ConstantOP}. We will do this in three steps:
\begin{outline}[enumerate]
    \1 First, we will show that \cref{M:OP}, hence \cref{P:GeneralizedConstant}, reduces to a \ac{CP}, and that this is a \ac{QCQP} in the special case of \cref{P:SimpleConstant}.
    \1 Then, we will show that this implies that \cref{P:ConstantOP} also reduces to a convex \ac{QCQP}.
    \1 Finally, we will finish the proof by combining \ac{QCQP} theory and flow-based sensitivity analysis to construct an \ac{FPTAS} for our \ac{QCQP}.
\end{outline}

To be exact, whenever we in this thesis write
\begin{center}
    Problem P reduces to Problem P',
\end{center}
this should be interpreted as
\begin{center}
    There exists a polynomial-time strict approximation-preserving\\
    reduction from Problem P to Problem P'.
\end{center}

\begin{definition}[Polynomial-time strict approximation-preserving reduction]\label{D:Reduction}
    Given minimization Problems P and P', a polynomial-time strict approximation-preserving reduction $(\InsMap, \SolMap)$ from Problem P to Problem P' is a pair of functions satisfying the following conditions for any feasible instance $(f)$ of Problem P:
    \begin{outline}
        \1 $(g):=\InsMap[(f)]$ is an instance of Problem P'.
        \1 The execution time of $\InsMap$ on $(f)$ and the bit count of the output $(g)$, denoted by $\bits{(g)}$, are polynomial in $\bits{(f)}$.
        \1 If $x'$ is $(g)$-feasible, then $\SolMap(x')$ is $(f)$-feasible and computed in time polynomial in $\bits{{(g)}}$, and
        \begin{equation*}
            \frac{\val[(f)]{\SolMap(x')}}{\OPT[(f)]} \leq \frac{\val[{(g)}]{x'}}{\OPT[{(g)}]}.
        \end{equation*}
    \end{outline}
\end{definition}

\chapter{Algorithm development}\label{Ch:Solution}
    \section{Reducing to a convex program}\label{S:Reducing} \Cref{M:OP} is a non-convex program with functions on $[0,\dur]$ as variables. Alternatively, one can view it as a program with uncountably infinitely many variables, by having a tuple $z_t=(x_t,y_t)$ for every $t\in[0,\dur]$. In this section we will prove that \cref{M:OP} reduces to a \ac{CP} with scalar variables and that \cref{P:ConstantOP,P:SimpleConstant} reduce to a convex \ac{QCQP}.

\subsection{Permitting waste}\label{SS:OPToFunCP}
We first relax \cref{M:OP} by allowing flow to be wasted in both arcs and nodes. More specifically,
\begin{equation}\label{E:RelaxOP}
\begin{array}{@{}r@{\;}l@{}l@{}l@{\qquad}l}
    y_a &= \outfuns_a(x_a) \quad &\longrightarrow \quad y_a &\leq \outfuns_a(x_a), &\forall{a \in A},\\
    \vin{v} &= \vout{v} \quad &\longrightarrow \quad \vin{v} &\geq \vout{v}, &\forall{v \in V \setminus (\sources \cup \sinks)},\\
    \vin{\sink} &= \dfuns_\sink \quad &\longrightarrow \quad \vin{\sink} &\geq \dfuns_\sink, &\forall{\sink \in \sinks}.
\end{array}
\end{equation}

We are then left with only convex constraints, as captured by \cref{M:FunCP}.

\begin{program}[Functional convex program]\label{M:FunCP}
Given: $G$ as in \cref{P:GeneralizedConstant}.
\begin{mini*}{x,y}{\sum_{s \in \sources}\int_0^{\int_0^{\dur}{\vout{s}(t)dt}}{\prodcpus_s(\alpha)d\alpha}}{}{}
    \addConstraint{y_a}{\leq \outfuns_a(x_a),}{a \in A}
    \addConstraint{x_a}{\leq \caps_a,}{a \in A}
    \addConstraint{y_a}{\geq 0,}{a \in A}
    \addConstraint{\vin{\sink}}{\geq \dfuns_\sink,}{\sink\in\sinks}
    \addConstraint{\vin{v}}{\geq \vout{v},}{v \in V \setminus (\sources \cup \sinks)}
    \addConstraint{\int_0^{\dur}{\vout{s}(t)dt} }{\leq \csupplies_s,}{s \in \sources}
    \addConstraint{x_a,y_a\text{ p.w. unif. }}{\text{cont. with non-null pieces}, \quad}{a \in A}
\end{mini*}
\end{program}

We will now prove some strong relationships between \cref{M:OP} and \cref{M:FunCP}.

\begin{definition}[Flow-carrying path]
    We call a path $P \subseteq A$ on a graph $G=(V,A)$ flow-carrying for the flow $z=(x,y)$ if
    \begin{equation*}
        x_a,y_a > 0, \ \forall{a \in P}.
    \end{equation*}

    If $z$ is clear from context, we simply call $P$ \textit{flow-carrying}.
\end{definition}

\begin{lemma}\label{L:WasteIsReachableFromSource}
    If x is feasible for some \cref{M:FunCP} instance with graph $G=(V,A)$, and there exists a $u \in V$ and a $t \in [0, \dur]$ such that
    \begin{equation*}
        \vin{u}(t) > \vout{t}
    \end{equation*}
    and/or
    \begin{equation*}
        \exists{a \in \OutArcs{u}}: y_a(t) < \outfuns_a(x_a(t)),
    \end{equation*}
    then there exists a flow-carrying $\sources$-$u$ path $P \subseteq A$.
\end{lemma}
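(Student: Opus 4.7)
My plan is to argue the contrapositive at the time $t$ given by the hypothesis: assume no $\sources$-$u$ path is flow-carrying at $t$ (meaning $x_a(t),y_a(t)>0$ on every arc), and derive a contradiction. To this end, define $U \subseteq V$ as the set of vertices from which $u$ is reachable at time $t$ via such arcs, so that $u \in U$ (via the empty path) and, by assumption, $U \cap \sources = \emptyset$.

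The first key observation is that every arc $a=(v_1,v_2)$ crossing into $U$ from outside (i.e., with $v_1 \notin U$ and $v_2 \in U$) must satisfy $y_a(t)=0$: otherwise $y_a(t) \leq \outfuns_a(x_a(t))$, combined with $\outfuns_a(0)=0$ (forced by $0 \leq \outfuns_a \leq \Id$) and the strict monotonicity of $\outfuns_a$, would give $x_a(t)>0$, making $a$ flow-carrying at $t$ and pulling $v_1$ into $U$.

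Next I would aggregate the flow-balance inequalities over $U$. By an arc-by-arc accounting of $\sum_{v \in U}(\vin{v}(t)-\vout{v}(t))$, arcs internal to $U$ contribute $y_a(t)-x_a(t) \leq 0$ (using $\outfuns_a \leq \Id$), arcs leaving $U$ contribute $-x_a(t)\leq 0$, and arcs entering $U$ contribute $y_a(t)=0$ by the previous observation; so the aggregate is nonpositive. On the other hand, since $U$ avoids $\sources$, every $v \in U$ satisfies $\vin{v}(t) \geq \vout{v}(t)$ — by the relaxed conservation inequality at interior nodes, and trivially at sinks, which have no out-arcs — so the aggregate is also nonnegative. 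It therefore equals zero, and each individual term above must vanish.

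The contradictions with the hypothesis now follow immediately. In Case (i), $\vin{u}(t)>\vout{u}(t)$ directly contradicts $\vin{u}(t)-\vout{u}(t)=0$. In Case (ii), if $y_a(t)<\outfuns_a(x_a(t))$ for some $a=(u,v)\in\OutArcs{u}$, then either $v \in U$, in which case $y_a(t)=x_a(t)$ sandwiched with $y_a(t) \leq \outfuns_a(x_a(t)) \leq x_a(t)$ forces $\outfuns_a(x_a(t))=x_a(t)=y_a(t)$, contradicting the strict inequality; or $v \notin U$, in which case $x_a(t)=0$ yields $\outfuns_a(x_a(t))=0>y_a(t)$, contradicting $y_a(t)\geq 0$. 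The main subtlety I anticipate is carefully justifying that arcs entering $U$ carry zero $y$-flow; everything else is a routine flow-balance exercise.
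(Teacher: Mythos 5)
Your proof is correct and is essentially the paper's argument in contrapositive form: your set $U$ is exactly the paper's set $R$ of nodes that can reach $u$ along flow-carrying arcs at time $t$, and both proofs rest on the same aggregated flow balance over this set together with the key observation that arcs entering it carry no flow. The only cosmetic difference is that the paper derives from the waste a node with strict net outflow, which must then be a source in $R$, whereas you assume $U$ contains no source and force every balance term to vanish, contradicting the waste hypothesis.
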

\begin{proof}
    For any $V' \subseteq V$, we know by definition that
    \begin{equation}\label{E:GlobalBalance}
        \vin{V'}(t) + \sum_{v \in V'}(\vout{v}(t)-\vin{v}(t)) = \vout{V'}(t) + \sum_{\mathclap{\substack{a \in A \\ u,v \in V'}}}(x_a(t)-y_a(t)) \geq 0, \ \forall{t \in [0,T]}.
    \end{equation}

    Assume that an arc or node has waste at some fixed time $t$. Denote by $R \subseteq V$ the set of all nodes that can reach $u$ along a flow-carrying path at time $t$. Then, by construction, $\vin{R}(t)=0$.

    Assume first that there is waste in $u$. Then $\vout{u}-\vin{u}<0$, so by \eqref{E:GlobalBalance} there must exist some other $s \in R$ such that $\vout{s}-\vin{s}>0$. By the constraints of \cref{M:FunCP} and the definition of $R$, we then know that $s$ is a source that can reach $u$ along a flow-carrying path at time $t$.

    Assume now instead that $u$ has an out-arc $a=(u,v)$ with waste. If $a$ is part of a cycle, then $v \in R$, hence
    \begin{equation*}
        \sum_{\mathclap{\substack{a \in A \\ u,v \in R}}}(x_a(t)-y_a(t)) > 0.
    \end{equation*}

    If $a$ is not part of a cycle, then $v \notin R$, hence $a \in \OutArcs{R}$ and
    \begin{equation*}
        \vout{R}(t) \geq x_a(t) > y_a(t) \geq 0.
    \end{equation*}
    
    Thus, in both cases the right hand side of \eqref{E:GlobalBalance} is positive, and so we must again have an $s \in R$ for which $\vout{s}-\vin{s}>0$, hence a flow-carrying $\sources$-$u$ path at time $t$.
\end{proof}

\begin{lemma}\label{L:OptIsFeasible}
    If $(f)$ is an instance of \cref{M:OP}, $(g)$ is the corresponding \cref{M:FunCP} instance, and $x$ is an optimal solution of $(g)$, then $x$ is a feasible point of $(f)$.
\end{lemma}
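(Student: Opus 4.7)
The plan is to proceed by contradiction. If $x$ is $(g)$-optimal but not $(f)$-feasible, then one of the three relaxations in \eqref{E:RelaxOP} must be strict at some time $t_0 \in [0,\dur]$: either an arc $a$ with $y_a(t_0) < \outfuns_a(x_a(t_0))$, an internal $v$ with $\vin{v}(t_0) > \vout{v}(t_0)$, or a sink $\sink$ with $\vin{\sink}(t_0) > \dfuns_\sink(t_0)$. In the sink case, because sinks have no out-arcs, $\vout{\sink}(t_0) = 0 \leq \dfuns_\sink(t_0) < \vin{\sink}(t_0)$, so this reduces to the node-waste setting of \cref{L:WasteIsReachableFromSource}. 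Applying that lemma yields a flow-carrying source-to-waste-point path $P = (\ssource = v_0, a_1, v_1, \dots, a_k, v_k = u)$ at time $t_0$. The goal is to build a $(g)$-feasible $x'$ with strictly smaller objective by shaving flow along $P$ on a short time interval, contradicting optimality.

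By piecewise uniform continuity with non-zero piece lengths, both the waste and the positivity of every $x_{a_i}, y_{a_i}$ persist on some open interval $I \ni t_0$ of positive length. On $I$, pick a small $\epsilon_1 > 0$, set $x'_{a_1} := x_{a_1} - \epsilon_1$, and propagate along $P$: at each hop, set $y'_{a_i} := \outfuns_{a_i}(x'_{a_i})$, which drops by some $\delta_i \geq 0$ relative to $y_{a_i}$, and then take $\epsilon_{i+1} := \delta_i$ and $x'_{a_{i+1}} := x_{a_{i+1}} - \epsilon_{i+1}$. The key structural fact is that every $\outfuns_a$ is $1$-Lipschitz on $\R_{\geq 0}$: non-negativity together with $\outfuns_a \leq \Id$ force $\outfuns_a(0) = 0$, after which concavity yields a non-increasing $\outfuns_a'$ with $\outfuns_a'(0) = \lim_{x \to 0^+} \outfuns_a(x)/x \leq 1$. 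Hence $\delta_i \leq \epsilon_i$, so the successive decrements are non-increasing along $P$ and the interior flow-conservation slacks only grow. At $u$, the residual drop $\delta_k$ in $\vin{u}$ is absorbed either by the original node/sink waste directly, or, in the arc-waste case, by additionally reducing $x_{a_0}$ on the wasteful out-arc by $\delta_k$ while holding $y_{a_0}$ fixed; the constraint $y_{a_0} \leq \outfuns_{a_0}(x_{a_0} - \delta_k)$ survives for $\epsilon_1$ small by the strict slack $y_{a_0} < \outfuns_{a_0}(x_{a_0})$ and continuity of $\outfuns_{a_0}$.

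I would then verify that $x'$ satisfies every $(g)$-constraint: capacity upper bounds hold since we only decrease $x$; $y'_a \geq 0$ holds because $\epsilon_1$ can be taken smaller than the flow-carrying positivity margin on $P$; the relations $y'_a \leq \outfuns_a(x'_a)$ hold with equality by construction, with the single exception of $a_0$, where the gap only widens; the cumulative source cap at $\ssource$ only tightens; and piecewise uniform continuity with non-zero pieces is preserved since the change is confined to the open interval $I$. Finally, the cumulative production at $\ssource$ strictly drops by $\epsilon_1 \cdot \lvert I \rvert > 0$, and by the (strict) positivity of $\prodcpus_{\ssource}$ on the relevant part of $\domain{\prodcpus_{\ssource}}$, the objective strictly decreases, contradicting $(g)$-optimality of $x$.

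The chief obstacle is the simultaneous preservation of flow conservation at every interior node of $P$: a priori, a decrement at the head of the path might fail to be absorbable later if $\outfuns_a$ could amplify signals. The $1$-Lipschitz property, derived directly from the structural hypotheses on $\outfuns$, is exactly what keeps the propagated decrements $\epsilon_i$ non-increasing, and hence compatible with the interior slacks. A smaller but still delicate point is uniformity in $\epsilon_1$: it must be small enough to preserve several strict inequalities simultaneously (waste at $u$, positivity of $y'$ on $P$, admissibility of the shifted cumulative production inside $\domain{\prodcpus_{\ssource}}$), but each condition provides a strictly positive upper bound, so a valid $\epsilon_1$ exists.
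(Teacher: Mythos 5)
Your overall strategy is the paper's: invoke \cref{L:WasteIsReachableFromSource} to get a flow-carrying source-to-waste path, modify the flow along it on a small time interval (using piecewise uniform continuity), and contradict optimality via a strict drop in source production. The only structural difference is the direction of bookkeeping: the paper backtracks from the waste point toward the source, applying $\outfuns^{-1}$ and "being conservative enough" to control the amplified decrements, while you push a decrement forward from the source, using the (correctly derived) $1$-Lipschitz property of $\outfuns_a$ to keep decrements from growing. Your explicit reduction of the sink case ($\vin{\sink} > \dfuns_\sink$) to node waste is a nice touch the paper leaves implicit.

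However, one step as written would fail. You set $y'_{a_i} := \outfuns_{a_i}(x'_{a_i})$ on every path arc and assert this "drops by some $\delta_i \geq 0$ relative to $y_{a_i}$". Nothing in \cref{L:WasteIsReachableFromSource} guarantees the path arcs themselves are waste-free: if $y_{a_i}(t) < \outfuns_{a_i}(x_{a_i}(t))$ on a path arc and the incoming decrement $\epsilon_i$ is smaller than that slack, then $\outfuns_{a_i}(x_{a_i}-\epsilon_i) > y_{a_i}$, so $\delta_i < 0$ and your rule $\epsilon_{i+1} := \delta_i$ \emph{increases} $x_{a_{i+1}}$; if a downstream arc is saturated ($x_{a_{i+1}} = \caps_{a_{i+1}}$), \cref{M:FunCP}-feasibility breaks, and your later justification "capacity upper bounds hold since we only decrease $x$" is then also false. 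The repair is small and in the spirit of your argument: take $y'_{a_i} := \min\{y_{a_i}, \outfuns_{a_i}(x'_{a_i})\}$, i.e.\ $\delta_i := \max\{0,\, y_{a_i} - \outfuns_{a_i}(x_{a_i}-\epsilon_i)\}$, which satisfies $0 \leq \delta_i \leq \epsilon_i$ by the Lipschitz bound and preserves $y'_{a_i} \leq \outfuns_{a_i}(x'_{a_i})$; alternatively, first argue that one may choose a waste point whose flow-carrying path is itself waste-free (replace the target by the first wasteful path arc and recurse on the strictly shorter prefix). With that patch, the remaining verifications you list go through and the lemma follows as in the paper.
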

\begin{proof}
    Assume $x$ is feasible for $(g)$ but not for $(f)$. Assume first that this is due to waste in some arc $a=(u,v)$ at some time $t \in [0,T]$:
    \begin{equation*}
        y_a(t) < \outfuns_a(x_a(t)).
    \end{equation*}
    
    By \cref{L:WasteIsReachableFromSource} we can for some $s \in \sources$ then find a flow-carrying $s$-$u$ path
    \begin{equation*}
        P=(a_1,a_2,...,a_k)=((s,w_1),(w_1,w_2),...,(w_{k-1},u).
    \end{equation*}
    Hence, we can backtrack along
    \begin{equation*}
        P':=(a_1,...,a_k,a)
    \end{equation*}
    by first reducing $x_a(t)$, then $y_{a_k}(t)$ accordingly, then $x_{a_k}(t)$ accordingly, then $y_{a_{k-1}}(t)$ accordingly, and so on, finally reducing $x_{a_1}$, thus the total production costs at that moment in time. As $t$ is part of a positive-length uniformly continuous component of $(x,y)$, we can, by being conservative enough in these reductions, make this modification for every $t'$ in some positive-length interval $I \subseteq [0,\dur]$ containing $t$. The result is then a solution $x'$ that is feasible for $(g)$ and has strictly lower cumulative production cost, thus objective cost, than $x$. Hence, $x$ is suboptimal for $(g)$. By similar arguments, $x$ is suboptimal if it has waste on some node $v \in V$ at some time $t \in [0,T]$. Hence, we conclude that any optimal solution of $(g)$ must be feasible for $(f)$.
\end{proof}

\begin{corollary}\label{C:OPAndFunCPShareOptima}
    If $(f)$ is an instance of \cref{M:OP} and $(g)$ is the corresponding \cref{M:FunCP} instance, then $(f)$ and $(g)$ have the same set of optimal solutions and $\OPT[(f)]=\OPT[(g)]$.
\end{corollary}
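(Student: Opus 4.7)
The plan is to exploit the fact that Program M:FunCP is by construction a relaxation of Program M:OP that shares its objective: the rewrites in \eqref{E:RelaxOP} only weaken the equality constraints on $y_a$, $\vin{v}$, and $\vin{\sink}$ into one-sided inequalities, and no new constraints are added. So first I would observe that every $(f)$-feasible point is also $(g)$-feasible, which, since the two programs have identical objective functions, immediately gives the inequality $\OPT[(g)] \leq \OPT[(f)]$.

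Next, I would invoke \cref{L:OptIsFeasible} to close the gap. Let $x^*$ be an optimal solution of $(g)$; the lemma tells us that $x^*$ is then also $(f)$-feasible. Combining this with the identity of the two objectives gives
\begin{equation*}
    \OPT[(f)] \leq \val[(f)]{x^*} = \val[(g)]{x^*} = \OPT[(g)],
\end{equation*}
so together with the opposite inequality from the first step we conclude $\OPT[(f)] = \OPT[(g)]$.

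Finally, I would argue equality of the optimal sets in both directions. If $x^*$ is $(g)$-optimal, the display above shows $\val[(f)]{x^*} = \OPT[(f)]$ and $x^*$ is $(f)$-feasible, so $x^*$ is $(f)$-optimal. Conversely, if $x$ is $(f)$-optimal, then $x$ is $(g)$-feasible and $\val[(g)]{x} = \val[(f)]{x} = \OPT[(f)] = \OPT[(g)]$, so $x$ is $(g)$-optimal. Hence the two optimal sets coincide.

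There is really no main obstacle here: all the substantive work was already done in \cref{L:WasteIsReachableFromSource} and \cref{L:OptIsFeasible}, which together rule out any strict slack in the relaxed constraints at an optimum of $(g)$. The corollary is a short bookkeeping step that packages those lemmas into the clean statement needed for the subsequent reduction to a convex program.
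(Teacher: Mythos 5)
Your argument is correct and is essentially the paper's own proof, which likewise notes that \cref{M:FunCP} is a relaxation of \cref{M:OP} and then invokes \cref{L:OptIsFeasible}; you have merely spelled out the bookkeeping (the two inequalities on the optimal values and the two inclusions between the optimal sets) that the paper leaves implicit. No gaps.
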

\begin{proof}
    \Cref{M:FunCP} is a relaxation of \cref{M:OP}, so the statement follows from \cref{L:OptIsFeasible}.
\end{proof}

\subsection{Constraining to piecewise constant flows}\label{SS:FunCPToHardCP}
\begin{lemma}\label{L:OptIsConstant}
    If $(g)$ is an instance of \cref{M:FunCP}, then $(g)$ has an optimal solution that is piecewise constant over $[0,\dur]$ and shares its breakpoints with the demands function $\dfuns$ of $(g)$.
\end{lemma}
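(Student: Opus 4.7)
The plan is to show that averaging any feasible flow of $(g)$ over each piece of $\dfuns$ produces a piecewise constant feasible flow of the same objective value. Given a feasible $(x,y)$, I would for each interval $\TIntervals_i$ and each $t \in \TIntervals_i$ define
\[
\bar x_a(t) := \frac{1}{\abs{\TIntervals_i}}\int_{\TIntervals_i} x_a(\tau)\, d\tau, \qquad \bar y_a(t) := \frac{1}{\abs{\TIntervals_i}}\int_{\TIntervals_i} y_a(\tau)\, d\tau,
\]
so $(\bar x, \bar y)$ is piecewise constant with the desired breakpoints by construction.

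Most constraints transfer immediately: the linear bounds on $x_a$ and $y_a$, the node-balance inequality $\vin{v} \geq \vout{v}$, and the cumulative production constraint $\int_0^{\dur}\vout{s}(t)\,dt \leq \csupplies_s$ are preserved because averaging commutes with integration, and the demand constraint $\vin{\sink}(t) \geq \dfuns_\sink(t)$ is preserved because $\dfuns_\sink$ is constant on each $\TIntervals_i$. The only non-routine step, which I expect to be the conceptual heart of the argument, is the concave relation $y_a \leq \outfuns_a(x_a)$: here Jensen's inequality applied to the concave $\outfuns_a$ gives
\[
\bar y_a \leq \frac{1}{\abs{\TIntervals_i}}\int_{\TIntervals_i}\outfuns_a(x_a(\tau))\, d\tau \leq \outfuns_a(\bar x_a),
\]
so this constraint is preserved as well. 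The objective is preserved because it depends on $(x,y)$ only through the cumulative source outflows $\int_0^{\dur}\vout{s}(t)\,dt$, and averaging does not change those totals. Hence $\val[(g)]{(\bar x, \bar y)} = \val[(g)]{(x,y)}$.

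To conclude, I would restrict $(g)$ to flows that are piecewise constant with breakpoints at the $\TIntervals_i$. This is a finite-dimensional problem over a compact convex set (compactness coming from the $\caps$ and $\csupplies$ bounds, convexity from concavity of each $\outfuns_a$) with continuous objective, so a minimizer exists on it. Since the averaging argument above shows that every feasible flow of $(g)$ is matched by a piecewise constant feasible one of equal cost, this restricted minimizer is in fact globally optimal for $(g)$. The proof would fail if $\outfuns_a$ were only assumed monotone rather than concave, because Jensen would then swing the wrong way; it is precisely the \ac{GPGG} concavity assumption that makes the averaging trick work.
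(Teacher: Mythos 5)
Your proposal is correct and rests on the same core mechanism as the paper's proof: average the flow over each maximal constant piece $\TIntervals_i$ of $\dfuns$, note that all linear constraints, the cumulative supply constraint, and the objective (which depends only on $\int_0^{\dur}\vout{s}(t)\,dt$) are preserved by averaging, and handle the constraint $y_a \leq \outfuns_a(x_a)$ via Jensen's inequality for the concave $\outfuns_a$. The only structural difference is in how optimality is concluded: the paper assumes an optimal solution $z$ of $(g)$ exists, invokes \cref{L:OptIsFeasible} to get $y_a = \outfuns_a(x_a)$ before applying Jensen, and then observes that the averaged $\tilde z$ is feasible of optimal value; you instead average an arbitrary feasible point (working directly with the inequality, so \cref{L:OptIsFeasible} is not needed) and then obtain existence of a minimizer by Weierstrass on the finite-dimensional restriction to piecewise constant flows, which is compact and has a continuous objective. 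Your route is slightly longer but more self-contained, since it also establishes that $(g)$ attains its optimum at all—something the paper's proof implicitly presupposes when it starts from "an optimal solution of $(g)$"; conversely, the paper's version is shorter given its earlier lemmas. One small remark: convexity of the restricted feasible set is not needed for your existence step—closedness and boundedness (from $0 \leq y_a \leq x_a \leq \caps_a$) plus continuity of the objective suffice.
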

\begin{proof}
    Assume $z:=(x,y)$ is an optimal solution of $(g)$, hence by \cref{L:OptIsFeasible} a feasible point of $(f)$.

    Recall that $\TIntervals$ denote the maximal constant pieces of $\dfuns$, and let $\tilde{z}$ be a function that on each such $\TIntervals_i$ maps every $t$ to the average of $z$ over that interval:
    \begin{equation}
        \tilde{z}: [0,\dur] \ni t \mapsto \left\{
        \begin{array}{@{}r@{\;}ll}
            \obar{z_1} &:= \frac{1}{\abs{\TIntervals_1}}\int_{\TIntervals_1}{z(\tau)d\tau}, &\text{if } t \in \TIntervals_{1}\\
            \vdots &&\\
            \obar{z_k} &:= \frac{1}{\abs{\TIntervals_k}}\int_{\TIntervals_k}{z(\tau)d\tau}, &\text{if } t \in \TIntervals_{k}
        \end{array} \right.
    \end{equation}
    
    By definition,
    \begin{equation}
        \int_0^{\int_0^{\dur}{\vout{s}(t)dt}}{\prodcpus_s(\alpha)d\alpha} = \int_0^{\int_0^{\dur}{\tilde{x}_{\OutArcs{s}}(t)dt}}{\prodcpus_s(\alpha)d\alpha} = \int_0^{\sum_{i=1}^{k}\abs{\TIntervals_{i}}\obar{\vout{s}}}{\prodcpus_s(\alpha)d\alpha}, \ \forall{s \in \sources},
    \end{equation}
    hence
    \begin{equation*}
        \val[(g)]{z} = \val[(g)]{\tilde{z}} = \OPT[(g)].
    \end{equation*}
    Moreover, since $z$ is feasible and the integral operator is linear, $\tilde{z}$ fulfills all the linear constraints of \cref{M:FunCP}. Finally, by $y=\outfuns(x)$, concavity of $\outfuns$, and Jensen's inequality:
    \begin{equation}
        \forall{(a,i) \in A \times [k]}: \obar{y}_{ai} = \frac{1}{\abs{\TIntervals_{i}}}\int_{\TIntervals_{i}}{\outfuns_a(x_{ai}(\tau))} d\tau \leq \outfuns_a(\obar{x}_{ai}), \quad \obar{z}=(\obar{x},\obar{y}).
    \end{equation}

    We hence conclude that $\tilde{z}$ is a feasible point of $(g)$ of optimal value, hence an optimal solution of $(g)$.
\end{proof}

As such a piecewise constant dynamic flow $\tilde{z}$ is entirely determined by the vector of static flows
\begin{equation}
    \obar{z}=(z_1,z_2,...,z_k),
\end{equation}
\cref{M:FunCP} reduces to finding an optimal such $\obar{z}$, captured by \cref{M:HardCP}.

\begin{program}[Hard convex program]\label{M:HardCP}
Given: $G$ as in \cref{P:GeneralizedConstant}.
\begin{mini*}{x,y}{\sum_{s \in \sources}{\int_0^{\sum_{i=1}^{k}\abs{\TIntervals_{i}}\vouti{s}}{\prodcpus_s(\alpha)d\alpha}}}{}{}
    \addConstraint{y_{ai}}{\leq\outfuns_a(x_{ai}), \quad}{(a,i) \in A \times [k]}
    \addConstraint{x_{ai}}{\leq \caps_a, \quad}{(a,i) \in A \times [k]}
    \addConstraint{y_{ai}}{\geq 0, \quad}{(a,i) \in A \times [k]}
    \addConstraint{\vini{v}}{\geq \vouti{v}, \quad}{(v,i) \in (V \setminus (S \cup \sinks)) \times [k]}
    \addConstraint{\vini{\sink}}{\geq \demands_{\sink i}, \quad}{(\sink,i) \in \sinks \times [k]}
    \addConstraint{\sum_{i=1}^{k}\abs{\TIntervals_{i}}\vouti{s}}{\leq \csupplies_s, \quad}{s \in \sources}
\end{mini*}
\end{program}
or equivalently
\begin{mini*}{x,y}{\sum_{s \in \sources}{\int_0^{\sum_{i=1}^{k}\abs{\TIntervals_{i}}\vouti{s}}{\prodcpus_s(\alpha)d\alpha}}}{}{}
    \addConstraint{y_a}{\leq\outfuns_a(x_a), \quad}{a \in A}
    \addConstraint{x_a}{\leq \caps_a, \quad}{a \in A}
    \addConstraint{y_a}{\geq 0, \quad}{a \in A}
    \addConstraint{\vin{v}}{\geq \vout{v}, \quad}{v \in V \setminus (S \cup \sinks)}
    \addConstraint{\vin{\sink}}{\geq \demands_\sink, \quad}{\sink\in\sinks}
    \addConstraint{\sum_{i=1}^{k}\abs{\TIntervals_{i}}\vouti{s}}{\leq \csupplies_s, \quad}{s \in \sources}
\end{mini*}

Given an instance $(g)$ of \cref{M:FunCP}, an optimal solution of $(g)$ can then be obtained by finding an optimal solution $z$ of its corresponding \cref{M:HardCP} instance, and then from $z$ constructing
\begin{equation}\label{E:PiecewiseConstantFromVec}
    [0,\dur] \ni t \mapsto
    \begin{cases}
        z_1, & \text{if } t \in \TIntervals_1\\
        \vdots &\\
        z_k, & \text{if } t \in \TIntervals_k.
    \end{cases}
\end{equation}

\begin{proposition}\label{Pr:FunCPReducesToHardCP}
    \cref{M:FunCP} reduces to \cref{M:HardCP}.
\end{proposition}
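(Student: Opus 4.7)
The plan is to verify the three conditions of \cref{D:Reduction} directly, with \cref{L:OptIsConstant} doing essentially all of the real work. Define the instance map $\InsMap$ to send an instance $(f)$ of \cref{M:FunCP} with data $(\dur, G=(V,A,\outfuns,\caps,\csupplies,\dfuns,\prodcpus))$ to the \cref{M:HardCP} instance carrying the same graph and function data, equipped with the piece lengths $\abs{\TIntervals_1},\dots,\abs{\TIntervals_k}$ and the piecewise-constant demand values $\demands_{\sink i}$ read off from $\dfuns$. Since $\dfuns$ is already supplied in piecewise-constant form, $\InsMap$ is essentially a relabeling; it runs in linear time in the input size and produces an output whose bit count is within a constant factor of $\bits{(f)}$.

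For the solution map, given a feasible $\obar z = (x_{ai}, y_{ai})_{a \in A, i \in [k]}$ of $(g):=\InsMap[(f)]$, let $\SolMap(\obar z)$ be the piecewise constant dynamic flow on $[0,\dur]$ built from $\obar z$ via \eqref{E:PiecewiseConstantFromVec}. Feasibility for $(f)$ is a direct verification: every inequality of \cref{M:FunCP} decouples across the intervals $\TIntervals_i$, and on $\TIntervals_i$ it reduces exactly to the corresponding $i$-th inequality of \cref{M:HardCP}, which holds because $\obar z$ is $(g)$-feasible; piecewise uniform continuity with non-zero piece lengths is immediate. Evaluating the \cref{M:FunCP} objective at $\SolMap(\obar z)$ and using $\int_0^{\dur}\vout{s}(t)dt = \sum_{i=1}^{k}\abs{\TIntervals_i}\vouti{s}$ yields $\val[(f)]{\SolMap(\obar z)} = \val[(g)]{\obar z}$. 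Construction of $\SolMap$ is clearly polynomial in $\bits{(g)}$.

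To close the ratio condition of \cref{D:Reduction} it then suffices to show $\OPT[(f)]=\OPT[(g)]$. By \cref{L:OptIsConstant}, $(f)$ admits an optimal solution that is piecewise constant with breakpoints aligned to the intervals $\TIntervals_i$; this solution lies in the image of $\SolMap$ and the preimage $\obar z$ is $(g)$-feasible with $\val[(g)]{\obar z}=\OPT[(f)]$, giving $\OPT[(g)]\leq\OPT[(f)]$. The reverse inequality follows by applying $\SolMap$ to any optimum of $(g)$ and using the value-preservation established above. Hence both the numerator and the denominator on the two sides of \cref{D:Reduction}'s ratio inequality agree, so the bound holds with equality. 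The only mildly delicate point in the whole argument is the objective correspondence: one must confirm that the inner integral in the \cref{M:FunCP} objective, when evaluated on a piecewise constant flow, produces exactly the upper limit of integration appearing in the \cref{M:HardCP} objective; once that is checked, the remainder is bookkeeping.
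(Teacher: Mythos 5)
Your proposal is correct and follows essentially the same route as the paper: the identity-style instance map, the piecewise-constant switch-statement solution map from \eqref{E:PiecewiseConstantFromVec}, and \cref{L:OptIsConstant} to equate the optimal values (which the paper leaves implicit after its preceding discussion). You merely spell out the $\OPT[(f)]=\OPT[(g)]$ step and the objective bookkeeping more explicitly than the paper does.
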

\begin{proof}
    Assume $(g)$ is an instance of \cref{M:FunCP} and $(h)$ is the corresponding instance of \cref{M:HardCP}. The encoding of $(g)$ requires at least one bit per breakpoint of the demands function $\dfuns$. Hence, the $\InsMap$ that maps $(g)$ to $(h)$ has its execution time and output encoding size $\bits{(h)}$ polynomial in
    \begin{equation*}
        \abs{A}+k,
    \end{equation*}
    thus in the number of bits encoding $(g)$, denoted by $\bits{(g)}$.
    
    For our $\SolMap$, we use
    \begin{equation*}
        (z_1,\ldots,z_k) \mapsto \left(t \mapsto
        \begin{cases}
            z_1, & \text{if } t \in \TIntervals_1\\
            \vdots &\\
            z_k, & \text{if } t \in \TIntervals_k
        \end{cases}\right).
    \end{equation*}
    By construction, this $\SolMap$ maps a feasible point of $(h)$ to a feasible point of $(g)$ with the same value. By implementing $\SolMap$ as one big switch statement, its execution time and $\bits{(g)}$ become polynomial in $\bits{(h)}$.
    
    $(\InsMap,\SolMap)$ is hence a reduction from \cref{M:FunCP} to \cref{M:HardCP}.
\end{proof}

\subsection{Reducing to a three times differentiable objective}\label{SS:HardCPToCP}
As $\prodcpus$ is non-decreasing, its integral is convex, so \cref{M:HardCP} is a \ac{CP}. However, to optimize it with performance guarantees, we want a three times differentiable objective function. To get that, we employ \cref{A:ToStatic} to map instance of \cref{M:HardCP} to instances of \cref{M:CP}.

\begin{program}[CP / min-cost concave generalized flow]\label{M:CP}
Given: A weakly connected digraph $G=(V,A,\outfuns,\costs,\caps,\demands)$, with $\sinks \subseteq V \ni \ssource$; non-decreasing and three times differentiable $\costs$, with $\costs(0)=0$; concave and strictly increasing $\outfuns_a$, with $\outfuns_a=0$ and $\outfuns_a \leq Id$.

\begin{mini*}{x,y}{\sum_{a \in A}{c_a(x_a)}}{}{}
    \addConstraint{y_a}{\leq\outfuns_a(x_a), \quad}{a \in A}
    \addConstraint{x_a}{\leq \caps_a, \quad}{a \in A}
    \addConstraint{y_a}{\geq 0, \quad}{a \in A}
    \addConstraint{\vin{v}}{\geq \vout{v}, \quad}{v \in V \setminus (\{\ssource\} \cup \sinks)}
    \addConstraint{\vin{\sink}}{\geq \demands_\sink, \quad}{\sink \in \sinks}
\end{mini*}
\end{program}

Note how \cref{M:CP} corresponds to a min-cost generalized $\ssource$-$\sinks$ flow problem permitting flow to be wasted. By \cref{L:OptIsFeasible} we moreover know that any optimal solution of \cref{M:CP} will have no waste. Hence, every optimal solution of \cref{M:CP} corresponds to a min-cost generalized $\ssource$-$\sinks$ flow.

\begin{algorithm}[htbp]
\caption{Simplify \acs{CP}}\label{A:ToStatic}
\KwData{\ac{GPGG} $G = (V,A,\outfuns,\caps,\csupplies,\dfuns,\prodcpus)$ as in \cref{P:GeneralizedConstant}}
\KwResult{$G' = (V',A',\outfuns,\caps,\costs)$ as in \cref{M:CP}}
    \begin{outline}[enumerate]
        \1 \label{I:duplicating} Make $k$ copies $G_1=(V_1,A_1),\ldots,G_k=(V_k,A_k)$ of $G=(V,A)$ and define $G'=(V',A')$ as their union (recall that $k$ is the number of pieces of $\dfuns$).
        \1 Add to $G'$ a super-source $\ssource$ with infinite supply.
        \1 For each $a' \in A'$: Define cost function $\costs_{a'}: x \mapsto 0$.
        \1 For each $s \in \sources$: For each (maximal) twice differentiable piece $I=[I^{min},I^{max})\subseteq[0,\csupplies_s]$ of $\prodcpus_s$:
            \2 Add to $V'$ a node $s_I$.
            \2 Add to $A'$ an arc $a_I=(\ssource,s_I)$ corresponding to cumulative production in $I$, but scaled to ensure $\outfuns \leq \Id$ on all arcs: capacity $\caps_{a_I}:=\abs{I}/\MinTIntervalLen$, cost function $\costs_{a_I}: x \mapsto \int_{I^{min}}^{I^{min} + \MinTIntervalLen x}\prodcpus_s(\alpha)d\alpha$, and $\outfuns_{a_I}:=\Id$.
            \2 For each $i \in [k]$: Add to $A'$ an arc $a_{Ii}:=(s_I,s_i)$, with $i$ being the copy of $s$ in graph $G_i$, corresponding to delegating cumulative production in $I$ to $\TIntervals_i$ and transforming it to production rate: capacity $\caps_{a_{Ii}}:=\abs{I}/\MinTIntervalLen$, cost function $\costs_{a_{Ii}}: x \mapsto 0$, and $\outfuns_{a_{Ii}}: x \mapsto \frac{x}{\abs{\TIntervals_i}/\MinTIntervalLen}$.
        \1 Return $G'$, with $\sources':=\{\ssource\}$ and $\sinks':=\cup_{i \in [k]}\sinks_i$, where $\sinks_i$ is the copy of $\sinks$ corresponding to graph copy $G_i$.
    \end{outline}
\end{algorithm}

\begin{figure}[htbp]
    \centering
    \begin{subfigure}[t]{0.2\linewidth}
        \centering
        \begin{tikzpicture}
            \node[source]      (s)                    {$s$};
            \node[sink]        (d)       [below=of s] {$\sink$};
            
            \draw[->] (s) -- (d);
        \end{tikzpicture}
        \caption{Input}
    \end{subfigure}
    \begin{subfigure}[t]{0.35\linewidth}
        \centering
        \begin{tikzpicture}
            \node[source]      (s*)                    {$\ssource$};
            \node[trans]        (s_{I})       [below left=of s*] {$s_{I}$};
            \node[trans]        (s_{I'})       [below right=of s*] {$s_{I'}$};
            \node[trans]        (s_1)       [below right=of s_{I}] {$s_1$};
            \node[sink]        (d_1)       [below=of s_1] {$\sink_1$};
            
            \draw[->] (s*) -- (s_{I});
            \draw[->] (s*) -- (s_{I'});
            \draw[->] (s_{I}) -- (s_1);
            \draw[->] (s_{I'}) -- (s_1);
            \draw[->] (s_1) -- (d_1);
        \end{tikzpicture}
        \caption{Output for $k=1$}
    \end{subfigure}
    \begin{subfigure}[t]{0.35\linewidth}
        \centering
        \begin{tikzpicture}
            \node[source]      (s*)                    {$\ssource$};
            \node[trans]        (s_{I})       [left=of s*] {$s_{I}$};
            \node[trans]        (s_{I'})       [right=of s*] {$s_{I'}$};
            \node[trans]        (s_1)       [below=of s*] {$s_1$};
            \node[sink]        (d_1)       [below=of s_1] {$\sink_1$};
            \node[trans]        (s_2)       [above=of s*] {$s_2$};
            \node[sink]        (d_2)       [above=of s_2] {$\sink_2$};
            
            \draw[->] (s*) -- (s_{I});
            \draw[->] (s*) -- (s_{I'});
            \draw[->] (s_{I}) -- (s_1);
            \draw[->] (s_{I'}) -- (s_1);
            \draw[->] (s_1) -- (d_1);
            \draw[->] (s_{I}) -- (s_2);
            \draw[->] (s_{I'}) -- (s_2);
            \draw[->] (s_2) -- (d_2);
        \end{tikzpicture}
        \caption{Output for $k=2$}
    \end{subfigure}
    \caption{Input and outputs of \cref{A:ToStatic} on a two-node instance of \cref{P:GeneralizedConstant}. In the instance, $\sources=\{s\}$, $\sinks=\{d\}$, $\prodcpus_s$ has one breakpoint, and $\dfuns$ has $k-1$ breakpoints.}
\end{figure}

\begin{definition}
Given a graph $G$ as in \cref{P:GeneralizedConstant} and a graph $G'$ corresponding to running \cref{A:ToStatic} on $G$:
\begin{itemize}
    \item $\FeasSetHardCP:=\{\text{feasible points of \cref{M:HardCP} on graph }G\}$
    \item $\FeasSetCP:=\{\text{feasible points of \cref{M:CP} on graph }G'\}$
    \item $\OptSetHardCP:=\{\text{optimal solutions of \cref{M:HardCP} on graph }G\}$
    \item $\OptSetCP:=\{\text{optimal solutions of \cref{M:CP} on graph }G'\}$
\end{itemize}
\end{definition}

\begin{definition}
    $\CumPurger: \OptSetCP \to \FeasSetHardCP$ is an orthogonal projection that for an input $z \in \OptSetCP$ removes those of its elements corresponding to arcs in $\delta_{\text{Neighbors}(\ssource)}$. $\otild{\CumPurger}$ is its extension to all of $\FeasSetCP$.
\end{definition}

\begin{lemma}\label{L:ToStaticPreservesValue}
    If $(f)$ is a \cref{M:HardCP} instance and $(g)$ is its corresponding \cref{M:CP} instance, then $\OPT[(f)]=\OPT[(g)]$.
\end{lemma}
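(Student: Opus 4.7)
The plan is to prove the equality by establishing the two inequalities $\OPT[(g)] \leq \OPT[(f)]$ and $\OPT[(f)] \leq \OPT[(g)]$ via explicit mappings between feasible points. Since the only arcs of $G'$ carrying nonzero cost are the super-source-adjacent arcs $a_I$, and the remainder of $G'$ consists of $k$ vertex-disjoint copies of $G$, the $(g)$-cost depends only on the flow through these $a_I$ arcs, and the rest of a $(g)$-solution essentially encodes a vector of $k$ static generalized flows on $G$.

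For $\OPT[(g)] \leq \OPT[(f)]$, I take an optimal $z = (z_1,\ldots,z_k) \in \OptSetHardCP$ and lift it to a $(g)$-feasible $z'$. On each copy $G_i$ inside $G'$, $z'$ agrees with $z_i$. For each source $s \in \sources$ with cumulative production $C_s := \sum_i \abs{\TIntervals_i}\vouti{s}$, I order the twice-differentiable pieces of $\prodcpus_s$ by increasing left endpoint and fill them canonically: letting $c_I$ be the mass of $C_s$ falling in piece $I$, set $x_{a_I} := c_I/\MinTIntervalLen$. On each $a_{Ii}$ I then put $x_{a_{Ii}} := x_{a_I}\abs{\TIntervals_i}\vouti{s}/C_s$ (or zero if $C_s = 0$); a short calculation using $\outfuns_{a_{Ii}}(x) = x\MinTIntervalLen/\abs{\TIntervals_i}$ shows that $\sum_I \outfuns_{a_{Ii}}(x_{a_{Ii}}) = \vouti{s}$ at $s_i$ and $\sum_i x_{a_{Ii}} = x_{a_I}$ at $s_I$, so flow conservation holds. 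Capacities are fine because $c_I \leq \abs{I}$ by canonical packing and $x_{a_{Ii}} \leq x_{a_I}$. Finally, $\costs_{a_I}(x_{a_I}) = \int_{I^{min}}^{I^{min}+c_I}\prodcpus_s(\alpha)d\alpha$, and because consecutive pieces of $\prodcpus_s$ abut and are filled in order, these integrals telescope to $\int_0^{C_s}\prodcpus_s(\alpha)d\alpha$, giving $\val[(g)]{z'} = \val[(f)]{z}$.

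For $\OPT[(f)] \leq \OPT[(g)]$, I take $z' \in \OptSetCP$ and set $z := \CumPurger(z')$. By \cref{L:OptIsFeasible} applied to $(g)$, $z'$ has no waste, so flow conservation is strict at every piece node $s_I$ and every source copy $s_i$. Unrolling this yields $\sum_i \abs{\TIntervals_i}\vouti{s} = \MinTIntervalLen\sum_I x_{a_I} \leq \MinTIntervalLen\sum_I \caps_{a_I} = \csupplies_s$ for each $s$, and the remaining $(f)$-constraints are inherited directly by restricting $z'$ to each $G_i$. The non-routine step is the cost inequality, which reduces per source to
\begin{equation*}
\int_0^{C_s}\prodcpus_s(\alpha)d\alpha \leq \sum_I \int_{I^{min}}^{I^{min}+\MinTIntervalLen x_{a_I}}\prodcpus_s(\alpha)d\alpha,
\end{equation*}
subject to $\MinTIntervalLen x_{a_I} \leq \abs{I}$ and $\sum_I \MinTIntervalLen x_{a_I} = C_s$. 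Since $\prodcpus_s$ is non-decreasing, any move of mass from a higher piece to a not-yet-full lower piece weakly decreases the right-hand side; iterating converges to the canonical packing, whose cost is exactly the left-hand side. The main obstacle in the proof is precisely this rearrangement argument; everything else is bookkeeping around \cref{A:ToStatic} and the scaling constants $\MinTIntervalLen$ and $\abs{\TIntervals_i}$.
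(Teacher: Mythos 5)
Your proof is correct and follows essentially the same route as the paper: one direction lifts an optimal \cref{M:HardCP} solution to a \cref{M:CP}-feasible point via the canonical packing of cumulative production into the pieces of $\prodcpus_s$ (with the telescoping cost identity), while the other direction applies $\CumPurger$ to an optimal \cref{M:CP} solution. The only difference is cosmetic: where the paper argues exact value preservation of $\CumPurger$ by noting that optimality forces the arcs $(\ssource,s_I)$ to be filled in order, you prove the one-sided rearrangement inequality valid for every feasible point---precisely the ``$\geq$'' version the paper itself invokes when extending $\CumPurger$ in \cref{Pr:HardCPReducesToCP}---which suffices (and is arguably cleaner when $\prodcpus_s$ takes equal values on consecutive pieces).
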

\begin{proof}
    We will prove this by showing that $\CumPurger$ is value-preserving and maps to $\OptSetHardCP \subseteq \FeasSetHardCP$. Assume $z=(x,y)\in\OptSetCP$ and $z'=(x',y'):=\CumPurger(z)$. Then $z' \in \FeasSetHardCP$ by Step \ref{I:duplicating} of \cref{A:ToStatic}, so $\CumPurger$ is well-defined. For each source $s \in \sources$, define $\mathcal{I}_s$ as the set of all (maximal) twice differentiable pieces $I=[I^{min},I^{max})\subseteq[0,\csupplies_s]$ of $\prodcpus_s$. Then,
    \begin{align}\label{E:CumPurgerPreservesValue}
    \begin{split}
        &\sum_{a \in A} \costs_a(x_a)\\
        &= \sum_{s \in \sources}\sum_{I \in \mathcal{I}_s} \costs_{(\ssource, s_I)}(x_{(\ssource, s_I)})\\
        &= \sum_{s \in \sources}\sum_{I \in \mathcal{I}_s} \costs_{(\ssource, s_I)}(
        \left\{
        \begin{array}{@{}ll}
            0, & \text{if } \sum_{J \in \mathcal{I}_s}x_{(s^*,s_J)} < \frac{I^{min}}{\MinTIntervalLen}\\
            \sum_{J \in \mathcal{I}_s}x_{(s^*,s_J)} - \frac{I^{min}}{\MinTIntervalLen}, & \text{if } \sum_{J \in \mathcal{I}_s}x_{(s^*,s_J)} \in I/\MinTIntervalLen\\
            \frac{\abs{I}}{\MinTIntervalLen}, & \text{if } \sum_{J \in \mathcal{I}_s}x_{(s^*,s_J)} \geq \frac{I^{max}}{\MinTIntervalLen}
        \end{array} \right\})\\
        &= \sum_{s \in \sources}\sum_{I \in \mathcal{I}_s} \costs_{(\ssource, s_I)}(\\
        &\quad\left\{
        \begin{array}{@{}ll}
            0, & \text{if } \sum_{(J,i) \in \mathcal{I}_s \times [k]}\outfuns_{(s_J,s_i)}^{-1}(y_{(s_J,s_i)}) < \frac{I^{min}}{\MinTIntervalLen}\\
            \sum_{(J,i) \in \mathcal{I}_s \times [k]}\outfuns_{(s_J,s_i)}^{-1}(y_{(s_J,s_i)}) - \frac{I^{min}}{\MinTIntervalLen}, & \text{if } \sum_{(J,i) \in \mathcal{I}_s \times [k]}\outfuns_{(s_J,s_i)}^{-1}(y_{(s_J,s_i)}) \in I/\MinTIntervalLen\\
            \frac{\abs{I}}{\MinTIntervalLen}, & \text{if } \sum_{(J,i) \in \mathcal{I}_s \times [k]}\outfuns_{(s_J,s_i)}^{-1}(y_{(s_J,s_i)}) \geq \frac{I^{max}}{\MinTIntervalLen}
        \end{array} \right\})\\
        &= \sum_{s \in \sources}\sum_{I \in \mathcal{I}_s} \costs_{(\ssource, s_I)}(
        \left\{
        \begin{array}{@{}ll}
            0, & \text{if } \sum_{i=1}^{k}\frac{\abs{\TIntervals_i}}{\MinTIntervalLen}\vout{s_i} < \frac{I^{min}}{\MinTIntervalLen}\\
            \sum_{i=1}^{k}\frac{\abs{\TIntervals_i}}{\MinTIntervalLen}\vout{s_i} - \frac{I^{min}}{\MinTIntervalLen}, & \text{if } \sum_{i=1}^{k}\frac{\abs{\TIntervals_i}}{\MinTIntervalLen}\vout{s_i} \in I/\MinTIntervalLen\\
            \frac{\abs{I}}{\MinTIntervalLen}, & \text{if } \sum_{i=1}^{k}\frac{\abs{\TIntervals_i}}{\MinTIntervalLen}\vout{s_i} \geq \frac{I^{max}}{\MinTIntervalLen}
        \end{array} \right\})\\
        &= \sum_{s \in \sources}\sum_{I \in \mathcal{I}_s} \costs_{(\ssource, s_I)}(\frac{1}{\MinTIntervalLen}
        \left\{
        \begin{array}{@{}ll}
            0, & \text{if } \sum_{i=1}^{k}\abs{\TIntervals_i}\vout{s_i} < I^{min}\\
            \sum_{i=1}^{k}\abs{\TIntervals_i}\vout{s_i} - I^{min}, & \text{if } \sum_{i=1}^{k}\abs{\TIntervals_i}\vout{s_i} \in I\\
            \abs{I}, & \text{if } \sum_{i=1}^{k}\abs{\TIntervals_i}\vout{s_i} \geq I^{max}
        \end{array} \right\})\\
        &= \sum_{s \in \sources}\sum_{I \in \mathcal{I}_s}
        \begin{cases}
            0, & \text{if } \sum_{i=1}^{k}\abs{\TIntervals_i}\vout{s_i} < I^{min}\\
            \int_{I^{min}}^{\sum_{i=1}^{k}\abs{\TIntervals_i}\vout{s_i}}\prodcpus_s(\alpha)d\alpha, & \text{if } \sum_{i=1}^{k}\abs{\TIntervals_i}\vout{s_i} \in I\\
            \int_{I^{min}}^{I^{max}}\prodcpus_s(\alpha)d\alpha, & \text{if } \sum_{i=1}^{k}\abs{\TIntervals_i}\vout{s_i} \geq I^{max}
        \end{cases}\\
        &=\sum_{s \in \sources}{\int_0^{\sum_{i=1}^{k}\abs{\TIntervals_{i}}\vout{s_i}}{\prodcpus_s(\alpha)d\alpha}}\\
        &=\sum_{s \in \sources}{\int_0^{\sum_{i=1}^{k}\abs{\TIntervals_{i}}\vouti{s}'}{\prodcpus_s(\alpha)d\alpha}}.
    \end{split}
    \end{align}
    Here, the second equality stems from the non-decreasing nature of all $\prodcpus_s$ and the optimality of $z$, as this means that $(\ssource, s_{I_{j}})$ must be fully utilized by $z$ before $x_{(\ssource, s_{I_{j+1}})}$ can become non-zero (where $I_{j}^{max}=I_{j+1}^{min}$). The other equalities in \eqref{E:CumPurgerPreservesValue} are just the result of plugging in the relationships between $x$ and $y$, the definitions of $\mathcal{I}_s$, and the definitions of $\outfuns_a$ on the arcs added by \cref{A:ToStatic}. From \eqref{E:CumPurgerPreservesValue}, we conclude that $\CumPurger$ is value-preserving.
    
    Given an optimal $z' \in \OptSetHardCP$, one can always construct a $z\in\FeasSetCP$ abiding by the second equality of \eqref{E:CumPurgerPreservesValue}, which then by \eqref{E:CumPurgerPreservesValue} will have the same value as $z'$. Thus, $z \in \OptSetCP$, as otherwise the value-preservation property of $\CumPurger$ would imply that $z' \notin \OptSetHardCP$, and so both program instances have the same optimal value.
\end{proof}

\begin{proposition}\label{Pr:HardCPReducesToCP}
    \cref{M:HardCP} reduces to \cref{M:CP}.
\end{proposition}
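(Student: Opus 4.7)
The plan is to take $\InsMap$ to be \cref{A:ToStatic} and $\SolMap$ to be $\otild{\CumPurger}$. The polynomial bit-size and runtime conditions of \cref{D:Reduction} are immediate from inspecting the algorithm: \cref{A:ToStatic} adds at most $\sum_{s \in \sources}\abs{\mathcal{I}_s}$ nodes and $\sum_{s \in \sources}\abs{\mathcal{I}_s}(k+1)$ source-gadget arcs to the $k$ copies of $G$, and each gadget cost $\costs_{a_I}$ can be encoded from the piece descriptor of $\prodcpus_s$ together with $\MinTIntervalLen$ and $I^{min}$, all of which are already present in the input encoding. The map $\otild{\CumPurger}$ is coordinate restriction and hence runs in time linear in $\bits{(g)}$.

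The first non-trivial step is showing $\otild{\CumPurger}$ sends feasible points of $(g)$ to feasible points of $(f)$. The capacity bounds, non-negativity of $y$, the relation $y_a \leq \outfuns_a(x_a)$, the conservation inequalities at internal nodes of each copy $G_i$, and the sink demand constraints are all inherited directly. The only condition requiring justification is the cumulative production cap $\sum_{i=1}^{k}\abs{\TIntervals_{i}}\vouti{s} \leq \csupplies_s$ at each source $s \in \sources$. This is obtained by chaining the conservation inequality at each copy $s_i$ with the out-flow relation $y_{(s_I,s_i)} = x_{(s_I,s_i)}\MinTIntervalLen/\abs{\TIntervals_i}$, then summing over $i$ and invoking both the conservation inequality $x_{(\ssource,s_I)} = y_{(\ssource,s_I)} \geq \sum_i x_{(s_I,s_i)}$ at $s_I$ and the gadget capacity $x_{(\ssource,s_I)} \leq \abs{I}/\MinTIntervalLen$; telescoping over $I$ and using $\sum_{I \in \mathcal{I}_s}\abs{I} = \csupplies_s$ yields the desired bound.

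The remaining step is the approximation-preservation inequality. By \cref{L:ToStaticPreservesValue} we have $\OPT[(f)] = \OPT[(g)]$, so it suffices to prove $\val[(f)]{\SolMap(z)} \leq \val[(g)]{z}$ for every $(g)$-feasible $z$. For each source $s$, the CP cost contribution rewrites as $\int_{E_s}\prodcpus_s(\alpha)d\alpha$ over the disjoint set $E_s := \bigcup_{I \in \mathcal{I}_s}[I^{min}, I^{min} + \MinTIntervalLen x_{(\ssource,s_I)}] \subseteq [0,\csupplies_s]$ of Lebesgue measure $T_s' := \MinTIntervalLen\sum_{I} x_{(\ssource,s_I)}$, while the HardCP cost contribution of $\SolMap(z)$ is $\int_0^{T_s}\prodcpus_s(\alpha)d\alpha$ with $T_s := \sum_{i}\abs{\TIntervals_i}\vouti{s}$ evaluated at $\SolMap(z)$; the feasibility argument of the previous paragraph moreover gives $T_s \leq T_s'$. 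Monotonicity and non-negativity of $\prodcpus_s$ then imply $\int_0^{T_s}\prodcpus_s \leq \int_0^{T_s'}\prodcpus_s \leq \int_{E_s}\prodcpus_s$, and summing over $s$ closes the argument.

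The hard part will be this last rearrangement step. The proof of \cref{L:ToStaticPreservesValue} obtained an equality by leveraging optimality of $z$, which forces the cheaper pieces of $\prodcpus_s$ to be saturated before any more expensive one is touched; for an arbitrary feasible $z$ no such ordering holds, and one must instead argue --- essentially a rearrangement observation for non-decreasing functions --- that any subset of $[0,\csupplies_s]$ of total measure $T_s'$ integrates $\prodcpus_s$ to at least as much as the left-most interval $[0,T_s']$. Combined with $T_s \leq T_s'$, this bridges the possibly strict gap between the CP objective and the HardCP objective of the projected flow, yielding the reduction.
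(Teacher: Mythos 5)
Your proposal is correct and takes essentially the same route as the paper: the same $\InsMap$ (\cref{A:ToStatic}) and $\SolMap$ ($\otild{\CumPurger}$), with the optimal-value equality supplied by \cref{L:ToStaticPreservesValue}. Where the paper merely remarks that the second equality of \eqref{E:CumPurgerPreservesValue} turns into a ``$\geq$'' for non-optimal feasible points, you justify this step explicitly via the rearrangement inequality for non-decreasing $\prodcpus_s$ and also verify the cumulative-supply constraint under the projection --- a more detailed write-up of the same argument, not a different one.
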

\begin{proof}
    If we extend the domain of $\CumPurger$ to all of $\FeasSetCP$, then the second equality of \eqref{E:CumPurgerPreservesValue} should be replaced by a $\geq$ while all its other equalities remain true. As the execution time and $\bits{out}$ of $\CumPurger$ is clearly polynomial in its $\bits{in}$, and by \cref{L:ToStaticPreservesValue}, we thus conclude that $\otild{\CumPurger}$ can be used as our $\SolMap$. As running \cref{A:ToStatic} and initializing a \cref{M:CP} instance from its output makes for a corresponding $\InsMap$, we conclude that \cref{M:HardCP} reduces to \cref{M:CP}.
\end{proof}

\subsubsection{Special case: quadratic losses and piecewise linear $\prodcpus$}

Running \cref{A:ToStatic} on an instance of \cref{M:HardCP} with quadratic line losses and piecewise constant production costs $\prodcpus$ generates a graph with quadratic arc costs and $\outfuns$. Hence, the corresponding \cref{M:CP} instance is then an instance of the more specific \cref{M:QCQP}, which we recognize to be a convex \acf{QCQP}.

\begin{program}[Convex QCQP / min-cost generalized flow with quadratic losses]\label{M:QCQP}
Given: $G$ as in \cref{M:CP}, but where
\begin{equation*}
    \forall{a \in A}:
    \begin{cases}
        \outfuns_a: x \mapsto \outscaler_a x - r_a x^2,
         \quad \outscaler_a \in [0,1] \a r_a \geq 0\\
         c_a: x \mapsto \clin_a x + \cquad_a x^2,
         \quad \clin_a,\cquad_a \geq 0.
    \end{cases}
\end{equation*}

\begin{mini*}{x,y}{\sum_{a \in A}{\clin_a x_a + \cquad_a x_a^2}}{}{}
    \addConstraint{r_a x_a^2 - \outscaler_a x_a + y_a}{\leq 0, \quad}{a \in A}
    \addConstraint{x_a}{\leq \caps_a, \quad}{a \in A}
    \addConstraint{y_a}{\geq 0, \quad}{a \in A}
    \addConstraint{\vin{v}}{\geq \vout{v}, \quad}{v \in V \setminus (\{\ssource\} \cup \sinks)}
    \addConstraint{\vin{\sink}}{\geq \demands_\sink, \quad}{\sink \in \sinks}
\end{mini*}
\end{program}

Note how the $\InsMap$ of \crefrange{SS:OPToFunCP}{SS:HardCPToCP} hence map instances of \cref{P:SimpleConstant} to instances of \cref{M:QCQP}.

\subsection{Reducing the generalized problem}\label{SS:SolvingGeneral}
Using everything we have seen so far, we now wish to construct a reduction from \cref{M:OP} to \cref{M:CP}, hence from \cref{P:GeneralizedConstant} to \cref{M:CP}. We know from \cref{C:OPAndFunCPShareOptima} that relaxing an instance of \cref{M:OP} to a corresponding instance of \cref{M:FunCP} does not change the set of optimal solutions nor the optimal value. Hence, a first instinct may be to find a reduction from \cref{M:OP} to \cref{M:FunCP}, and then compose this with the other reductions seen so far. This is however challenging without additional constraints on \cref{M:FunCP}, as its feasible points can be highly exotic functions that are hard to manipulate into feasible points of \cref{M:OP}. Hence, we will here instead reduce \cref{M:OP} directly to \cref{M:CP}, as we can then make the necessary manipulations on the vectors found for \cref{M:CP}, before functions are generated from these.

Mapping an instance of \cref{M:OP} to a corresponding instance of \cref{M:FunCP} is as simple as replacing the appropriate equalities with inequalities. By \crefrange{SS:OPToFunCP}{SS:HardCPToCP}, we hence find that \cref{A:GeneralizedConstantToCP} is an $\InsMap$ from \cref{M:OP} to \cref{M:CP}.

\begin{algorithm}[htbp]
\caption{Map \cref{M:OP} instances to \cref{M:CP} instances}\label{A:GeneralizedConstantToCP}
\KwData{\cref{P:GeneralizedConstant} instance $(f)$ with corresponding \ac{GPGG} $G_{(f)}$.}
\KwResult{\cref{M:CP} instance $(g)$}
    \begin{outline}[enumerate]
        \1 \label{I:FirstRedundantReduction} $(f') \longleftarrow$ \eqref{E:RelaxOP} applied to $(f)$
        \1 \label{I:SecondRedundantReduction} $(f'') \longleftarrow$ \cref{M:HardCP} instance corresponding to $G_{(f')}=G_{(f)}$
        \1 $G \longleftarrow$ \cref{A:ToStatic} on $G_{(f'')}=G_{(f')}=G_{(f)}$
        \1 $(g) \longleftarrow$ \cref{M:CP} instance corresponding to $G''$
    \end{outline}
\end{algorithm}

Note how Steps \ref{I:FirstRedundantReduction} and \ref{I:SecondRedundantReduction} of \cref{A:GeneralizedConstantToCP} do not modify the underlying graph, which is all the next steps rely on, hence are completely redundant and can be skipped.

Assume now that $(f)$ is some instance of \cref{P:GeneralizedConstant} and that $(f')$ and $(g)$ are derived from $(f)$ as in \cref{A:GeneralizedConstantToCP}. Repeating the arguments of \cref{Pr:FunCPReducesToHardCP,Pr:HardCPReducesToCP}, we find that the $\SolMap$ used for reducing \cref{M:FunCP} to \cref{M:CP} can be used for reducing \cref{M:OP} to \cref{M:EffOP}.

\begin{program}\label{M:EffOP}
Given: $G$ as in \cref{M:CP}.
\begin{mini*}{x,y}{\sum_{a \in A}{c_a(x_a)}}{}{}
    \addConstraint{y_a}{= \outfuns_a(x_a), \quad}{a \in A}
    \addConstraint{x_a}{\leq \caps_a, \quad}{a \in A}
    \addConstraint{y_a}{\geq 0, \quad}{a \in A}
    \addConstraint{\vin{v}}{=\vout{v}, \quad}{v \in V \setminus (\{\ssource\} \cup \sinks)}
    \addConstraint{\vin{\sink}}{=\demands_\sink, \quad}{\sink \in \sinks}
\end{mini*}
\end{program}

Hence, for our $\SolMap$ we just have to prepend a \textit{rounding} step that maps a $(g)$-feasible point to a feasible point of the corresponding \cref{M:EffOP} instance. \cref{A:Round} is that step, either decreasing arc flows or leaving them unchanged, hence either decreasing the total solution cost or leaving it unchanged. Specifically, it alternates between finding flow-carrying paths from $\ssource$ to arcs or nodes with waste, and between removing as much flow as possible from those paths without breaking \cref{M:CP} feasibility.

\begin{algorithm}[htbp]
\caption{Remove all waste}\label{A:Round}
\KwData{$G=(V,A,\outfuns,\caps,\costs)$ as in \cref{M:CP} and $x,y$ feasible on the corresponding \cref{M:CP} instance.}
\KwResult{$x,y$}
    \begin{outline}[enumerate]
        \1 \label{I:defs} Define:
            \2 $A':=\{a \in A: x_a > 0\})$
            \2 $G':=(V,A')$ ($\OutArcs{}$ will then be in terms of this)
            \2 $q:=0^{A'}$
            \2 $parent:=\text{Null}^{A'}$
            \2 $queue:=\text{list}(\OutArcs{\ssource})$
            \2 $expanded:=\{\}$
        \1 \label{I:bfs} While $\text{length}(queue)>0$
            \2 $a \leftarrow (u,v) \leftarrow queue.$leftpop()
            \2 If $v \in expanded$: continue
            \2 $expanded.$add($v$)
            \2 If $parent_a=$Null: $q_a \leftarrow x_a$
            \2 Else: $q_a \leftarrow \min\{\outfuns_{parent_a}(q_{parent_a}),x_a\}$
            \2 \label{I:OnArcWaste} If $y_a < \outfuns_a(x_a)$:
                \3 \label{I:initd} $d:=\min\{q_a, x_a - \gamma_{a}^{-1}(y_a)\}$
                \3 \label{I:EliminationLoop} While $parent_a \neq $Null:
                    \4 $x_a \leftarrow x_a - d$
                    \4 \label{I:updy} $y_{parent_a} \leftarrow y_{parent_a} - d$
                    \4 $a \leftarrow parent_a$
                    \4 \label{I:updd} $d \leftarrow \outfuns_{a}^{-1}(d)$
                \3 $x_a \leftarrow x_a - d$
                \3 Go to Step \ref{I:defs}
            \2 \label{I:OnNodeWaste} If $\vin{v} > \vout{v} + \demands_v$:
                \3 $d:=\min\{q_a, \vin{v} - \vout{v} - \demands_v\}$
                \3 $y_a \leftarrow y_a - d$
                \3 Go to Step \ref{I:EliminationLoop}
            \2 For all $a' \in \OutArcs{v}$:
                \3 $parent_{a'} \leftarrow a$
                \3 $queue.$append($a'$)
    \end{outline}
\end{algorithm}

\begin{lemma}\label{L:RoundDoesNoHarm}
    Given a \cref{M:CP} instance $(f)$, a corresponding \cref{M:EffOP} instance $(f')$, and an $(f)$-feasible $z=(x,y)$, \cref{A:Round} in $\BigOh(\abs{A}^2)$ time returns an $(f')$-feasible point $z' \leq z$ of at most the same value as $z$.
\end{lemma}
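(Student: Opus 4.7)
The plan is to split the lemma into three subclaims: that $z' \leq z$ componentwise (which, by the non-decreasingness of each $\costs_a$, immediately yields $\val[(f')]{z'} \leq \val[(f)]{z}$); that $z'$ is $(f')$-feasible; and that the execution time is $\BigOh(\abs{A}^2)$. The first is immediate from a syntactic inspection of \cref{A:Round}, since every update to an $x_a$ or $y_a$ subtracts a non-negative quantity.

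For the feasibility subclaim, I would argue by induction over the restarts of \cref{I:defs} that $(f)$-feasibility is preserved throughout execution. In the inductive step, a backtrack along a path $a_1,\ldots,a_k$ from $\ssource$ to the detected waste uses amounts $d_i = \outfuns_{a_i}^{-1}(d_{i+1})$, well-defined because each $\outfuns_{a_i}$ is strictly increasing. Flow conservation at every intermediate node is preserved since both the inflow (through $y_{a_i}$) and the outflow (through $x_{a_{i+1}}$) decrease by exactly $d_{i+1}$; non-negativity of $x_{a_i}$ follows from $d_i \leq q_{a_i} \leq x_{a_i}$ via the $q$-recursion; and the arc-loss constraint $y_{a_i} \leq \outfuns_{a_i}(x_{a_i})$ is preserved by subadditivity of $\outfuns_{a_i}$ (a consequence of concavity together with $\outfuns_{a_i}(0) = 0$), which gives $\outfuns_{a_i}(x_{a_i}) - \outfuns_{a_i}(d_i) \leq \outfuns_{a_i}(x_{a_i} - d_i)$. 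On termination, the BFS in \cref{I:bfs} has completed without triggering a restart, so no arc or node waste can be reached from $\ssource$ along a flow-carrying path; by \cref{L:WasteIsReachableFromSource} no waste can exist elsewhere either, so every inequality that \cref{M:CP} relaxes from \cref{M:EffOP} holds with equality and $z'$ is $(f')$-feasible.

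For the running time, one iteration of BFS plus backtrack costs $\BigOh(\abs{A})$, so it suffices to bound the number of restarts by $\BigOh(\abs{A})$. Each restart falls into one of two cases. If $d$ is limited by the $q_a$ bound, then tracing the recursion $q_a = \min\{\outfuns_{parent_a}(q_{parent_a}), x_a\}$ back to its tight index shows that some arc $a'$ on the path satisfies $q_{a'} = x_{a'}$ and is therefore reduced to zero, removing $a'$ from $A' := \{a : x_a > 0\}$; this case can occur at most $\abs{A}$ times. Otherwise $d$ is the waste-elimination bound, and the iteration exactly removes the detected waste. Although strict subadditivity of $\outfuns$ can reintroduce new arc waste at ancestors on the path, I expect an amortized argument pairing each waste-elimination restart with a subsequent arc-deletion restart (or bounding the depth of the BFS tree by $\abs{V}$) to give a total of $\BigOh(\abs{A})$ restarts, hence an $\BigOh(\abs{A}^2)$ overall bound.

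The hardest part of this plan is that last potential-function argument, precisely because the algorithm can temporarily reintroduce waste at upstream arcs while eliminating waste downstream, so naive counts of either $\abs{A'}$ or the waste cardinality are not monotone individually. The other two subclaims reduce to local invariants together with the subadditivity observation, and so should follow from routine induction.
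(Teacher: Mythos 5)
Your first two subclaims are sound and essentially parallel the paper: monotone updates give $z'\leq z$ and, with non-decreasing $\costs_a$, the value bound; and your preservation argument ($(f)$-feasibility via the $q$-recursion, exact conservation at interior path nodes, subadditivity of $\outfuns_a$ from concavity with $\outfuns_a(0)=0$), combined with \cref{L:WasteIsReachableFromSource} at termination, correctly yields $(f')$-feasibility of whatever the algorithm outputs.

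The genuine gap is the restart count, and you have named it yourself without closing it. Bounding the number of returns to Step \ref{I:defs} by $\BigOh(\abs{A})$ carries the whole lemma: it is what gives termination at all, not only the $\BigOh(\abs{A}^2)$ bound, yet your proposal only says you ``expect'' an amortized argument. Neither of the fixes you gesture at works as stated: a waste-elimination restart triggered deep in the BFS tree can, on your own account, re-open slack on every ancestor arc of the backtracked path while deleting no arc, so there is no injection pairing waste-elimination restarts with arc-deletion restarts; and the depth of the BFS tree bounds the length of a single backtrack, not the number of restarts, since each re-opened ancestor waste can trigger its own restart whose repair re-opens waste still further upstream, so naive counts cascade rather than telescope. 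The paper closes exactly this hole by a different argument: it shows that the $d$ chosen in Step \ref{I:OnArcWaste} is the optimal value of the auxiliary program \eqref{E:MaxElimination} and hence tight in one of its constraints, and from this concludes that every execution of Step \ref{I:OnArcWaste} or \ref{I:OnNodeWaste} forces, irreversibly and for the first time, either $x_{a'}=0$ for some arc or tightness of the loss/balance constraint being repaired, so these steps run at most $2\abs{A}+\abs{V}$ times in total; in other words, the paper asserts precisely the permanence (``no new waste on path arcs'') that your strict-concavity worry calls into question. To finish along your route you must either establish such a permanence statement (or otherwise control the re-introduced slack, e.g.\ by an explicit potential function with a proven decrease per restart) or replace the count altogether; as submitted, termination and the quadratic time bound are not established.
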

\begin{proof}
    We start by proving correctness. As our input $z$ is $(f)$-feasible and steps \ref{I:OnArcWaste} and \ref{I:OnNodeWaste} are the only ones modifying $z$, it is enough to show that these do not introduce any constraint violations of $(f')$, keep running until $z$ is $(f')$-feasible, reduce the cost of $z$, and bring $z$ "closer" to $(f')$-feasibility.
    
    So assume first that we have just reached Step \ref{I:OnArcWaste} for some arc $a=(u,v) \in A' \subseteq A$, hence that $y_a < \outfuns_a(x_a)$. As Step \ref{I:OnArcWaste} or \ref{I:OnNodeWaste} would otherwise have been triggered before reaching $a$, hence causing the \ac{BFS} to start over, we know that
    \begin{equation*}
        P=(a_1,...,a_l):=(parent_{parent_{..._{parent_a}}},...,parent_{parent_a},parent_a)
    \end{equation*}
    is a path-carrying $\ssource$-$u$ path satisfying the constraints of $(f')$.
    
    Our goal in Step \ref{I:OnArcWaste} is thus to reduce $x_a$ by as much as possible without introducing any constraint violations of $(f')$. Step \ref{I:OnArcWaste} reduces $x_a$ by $d$ and then iterates backwards through $P$ to ensure all arcs and nodes of it satisfy the constraints of $(f')$. Hence, $d$ is the solution of
    \begin{maxi}{d}{d}{\label{E:MaxElimination}}{}
        \addConstraint{x_a - \gamma_{a}^{-1}(y_a)}{\geq d}
        \addConstraint{\outfuns_{a_{l}}(x_{a_{l}})}{\geq d}
        \addConstraint{\outfuns_{a_{l}}(\outfuns_{a_{l-1}}(x_{a_{l-1}}))}{\geq d}
        \addConstraint{\outfuns_{a_{l}}(\outfuns_{a_{l-1}}(\outfuns_{a_{l-2}}(x_{a_{l-2}})))}{\geq d}
        \addConstraint{...}{}
        \addConstraint{\outfuns_{a_{l}}(...(\outfuns_{a_1}(x_{a_1})))}{\geq d}.
    \end{maxi}
    Here, the first constraint ensures $\outfuns_a(x_a) \geq y_a$, hence that no constraint violations of $(f)$ are introduced, while the $i$-th constraint ensures that we can satisfy the new $x_a$ and strong flow conservation without making $a_{l+2-i}$ waste flow, hence that no constraint violations of $(f')$ are introduced. We note that the solution of \eqref{E:MaxElimination} is positive, since $P$ is flow-carrying and $a$ has waste.
    
    Since all $\outfuns$ are strictly increasing, the whole set of inequalities of \eqref{E:MaxElimination} is equivalent to the single inequality
    \begin{equation}\label{E:CombinedInequality}
    \begin{split}
        d \leq &\min\{x_a - \outfuns_{a}^{-1}(y_a), \outfuns_{a_{l}}(\\
            &\quad\min\{x_{a_{l}}, \outfuns_{a_{l-1}}(\\
                &\qquad\min\{x_{a_{l-1}}, \outfuns_{a_{l-2}}(...(\\
                    &\quad\qquad\min\{x_{a_2}, \outfuns_{a_1}(x_{a_1})\}
                ))\}
            )\}
        )\}.
    \end{split}
    \end{equation}
    
    Upon reaching Step \ref{I:OnArcWaste}, we have
    \begin{equation*}
    \begin{split}
        q_{a_1} &= x_{a_1}\\
        q_{a_2} &= \min\{x_{a_2}, \outfuns_{a_1}(q_{a_1})\}\\
        q_{a_3} &= \min\{x_{a_3}, \outfuns_{a_2}(q_{a_2})\}\\
        \vdots&\\
        q_{a_{l}} &= \min\{x_{a_{l}}, \outfuns_{a_{l-1}}(q_{a_{l-1}})\}\\
        q_{a} &= \min\{x_{a}, \outfuns_{a_{l}}(q_{a_{l}})\}.
    \end{split}
    \end{equation*}
    Hence, 
    \begin{equation*}
        d := \min\{q_a, x_a - \gamma_{a}^{-1}(y_a)\}
    \end{equation*}
    equals the right-hand side of \eqref{E:CombinedInequality}. This then means that the $d$ of Step \ref{I:OnArcWaste} maximizes \eqref{E:MaxElimination} and is tight in one of its constraints. Hence, Step \ref{I:OnArcWaste} neither increases any $x_{a'}$ nor breaks any
    \begin{equation*}
        \outfuns_{a'}(x_{a'})=y_{a'},
    \end{equation*}
    yet reduces $\vout{\ssource}$ and causes
    \begin{equation*}
        x_{a_i} = 0 \text{ for some } i \in [l]
    \end{equation*}
    and/or
    \begin{equation*}
        \outfuns_a(x_a) = y_a.
    \end{equation*}
    
    Therefore, running Step \ref{I:OnArcWaste} on an $(f)$-feasible $z$ retains the $(f)$-feasibility of $z$ while lowering its cost and causing at least one arc $a'$ to irreversibly and for the first time satisfy $\outfuns_{a'}(x_{a'})=y_{a'}$ or $x_{a'}=0$. A similar analysis tells us that running Step \ref{I:OnNodeWaste} on an $(f)$-feasible $z$ retains the $(f)$-feasibility of $z$ while lowering its cost and causing one node to irreversibly and for the first time satisfy $\vin{v}=\vout{v}$ and/or at least one arc $a'$ to irreversibly and for the first time satisfy $x_{a'}=0$.
    
    Thus, steps \ref{I:OnArcWaste} and \ref{I:OnNodeWaste} must run a total of at most $2\abs{A}+\abs{V}$ times before
    \begin{equation*}
        \outfuns_a(x_a) = y_a, \ \forall{a \in A}.
    \end{equation*}
    
    The only remaining question is whether we can be in a situation where $x,y$ has waste, yet neither Step \ref{I:OnArcWaste} nor \ref{I:OnNodeWaste} is reached. However, since Step \ref{I:bfs} is a \ac{BFS} on the graph of flow-carrying arcs, we know from \cref{L:WasteIsReachableFromSource} that Step \ref{I:OnArcWaste} and/or \ref{I:OnNodeWaste} is reached in Step \ref{I:bfs} iff $z$ has waste. To conclude:
    
    \begin{itemize}
        \item We start with an $(f)$-feasible flow.
        \item If we have an $(f)$-feasible flow, then steps \ref{I:OnArcWaste} and \ref{I:OnNodeWaste} give an $(f)$-feasible flow of strictly lower cost.
        \item Steps \ref{I:OnArcWaste} and \ref{I:OnNodeWaste} will keep being reached as long as the flow is not $(f')$-feasible.
        \item Steps \ref{I:OnArcWaste} and \ref{I:OnNodeWaste} can in total run at most $2\abs{A}+\abs{V}$ times.
    \end{itemize}
    Thus, given $(f)$-feasible $z$, \cref{A:Round} returns a lower-cost $(f')$-feasible $z'$ in finite time.
    
    To prove that the algorithm converges in $\BigOh(\abs{A}^2)$ time, first note that steps \ref{I:defs}, \ref{I:OnArcWaste} and \ref{I:OnNodeWaste} each run in $\BigOh(\abs{A})$ time.
    
    Assume the values of $\vin{v}$ and $\vout{v}$ are stored in node $v$, rather than having to be recomputed upon use. Then each iteration of the loop in Step \ref{I:bfs} either runs in $\BigOh(1)$ time or triggers Step \ref{I:OnArcWaste} or \ref{I:OnNodeWaste}, both of which break out of the loop. As Step \ref{I:bfs} visits no arc twice, it performs at most $\abs{A}$ iterations. Thus, Step \ref{I:bfs} runs in $\BigOh(A+A)=\BigOh(A)$ time.
    
    As we already saw, steps \ref{I:OnArcWaste} and \ref{I:OnNodeWaste} can in total be executed at most $2\abs{A}$ times. As steps \ref{I:OnArcWaste} and \ref{I:OnNodeWaste} are the only that make the algorithm start over from Step \ref{I:defs}, we conclude that steps \ref{I:defs} and \ref{I:bfs} run $\BigOh(A)$ times, each time at a cost of $\BigOh(A)$. Thus, the total time complexity of \cref{A:Round} is $\BigOh(\abs{A}^2)$.

\end{proof}

We now have everything we need to define the $\SolMap$ for reducing from \cref{M:OP} to \cref{M:CP}, which we do in \cref{A:CPToGeneralizedConstant}.

\begin{algorithm}[htbp]
\caption{Map feasible points of \cref{M:CP} instances to feasible points of \cref{P:GeneralizedConstant} instances}\label{A:CPToGeneralizedConstant}
\KwData{\cref{M:CP} instance $(f)$ and $(f)$-feasible $z=(x,y)$.}
\KwResult{$\otild{z}$}
    \begin{outline}[enumerate]
        \1 $z' \longleftarrow$ \cref{A:Round} on $z'$ and $G_{(f)}$
        \1 \label{I:LightPurge} $z'' \longleftarrow \otild{\CumPurger}(z')$
        \1 $\otild{z} \longleftarrow$ \eqref{E:PiecewiseConstantFromVec} with $z''$
    \end{outline}
\end{algorithm}

\begin{proposition}\label{Pr:GeneralizedConstantReducesToCP}
    \cref{P:GeneralizedConstant} reduces to \cref{M:CP} through $(\text{\cref{A:GeneralizedConstantToCP}}, \text{\cref{A:CPToGeneralizedConstant}})$.
\end{proposition}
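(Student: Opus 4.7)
The plan is to verify the three conditions of \cref{D:Reduction} for $(\InsMap, \SolMap) := (\text{\cref{A:GeneralizedConstantToCP}}, \text{\cref{A:CPToGeneralizedConstant}})$; throughout, let $(f)$ be an instance of \cref{P:GeneralizedConstant}, let $(g) := \InsMap[(f)]$, and let $(f')$ and $(f'')$ denote the intermediate \cref{M:FunCP} and \cref{M:HardCP} instances on $G_{(f)}$. For the $\InsMap$ conditions, observe that steps \ref{I:FirstRedundantReduction} and \ref{I:SecondRedundantReduction} of \cref{A:GeneralizedConstantToCP} leave $G_{(f)}$ untouched, so the only substantive work is \cref{A:ToStatic}; the proof of \cref{Pr:HardCPReducesToCP} already establishes that this step takes time polynomial in $\bits{(f)}$ and produces an encoding of $(g)$ whose size is polynomial in $\bits{(f)}$.

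Next, I would walk through $\SolMap$ applied to a $(g)$-feasible $z$. The three successive outputs are $z'$ (from \cref{A:Round} applied to $z$ and $G_{(g)}$), $z'' := \otild{\CumPurger}(z')$, and $\otild{z}$ constructed via \eqref{E:PiecewiseConstantFromVec}. By \cref{L:RoundDoesNoHarm}, $z'$ is feasible for the corresponding \cref{M:EffOP} instance in $\BigOh(\abs{A}^2)$ time and satisfies $\val[(g)]{z'} \leq \val[(g)]{z}$. Since $z'$ is waste-free, strong flow conservation at each auxiliary node $s_I$ and $s_i$ combined with the arc capacity $x_{(\ssource, s_I)} \leq \abs{I}/\MinTIntervalLen$ yields $\sum_{i=1}^{k} \abs{\TIntervals_i} \vouti{s} \leq \csupplies_s$ for every $s \in \sources$; the remaining \cref{M:HardCP} constraints are directly inherited from \cref{M:CP} feasibility, so $z'' \in \FeasSetHardCP$. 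Applying \eqref{E:PiecewiseConstantFromVec} then yields an $(f)$-feasible $\otild{z}$, exactly as in the proof of \cref{L:OptIsConstant}, and all three steps run in polynomial time.

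Finally, I would prove the approximation inequality by separately establishing $\OPT[(f)] = \OPT[(g)]$ and $\val[(f)]{\otild{z}} \leq \val[(g)]{z}$. The optimality equality chains \cref{C:OPAndFunCPShareOptima}, \cref{Pr:FunCPReducesToHardCP} combined with \cref{L:OptIsConstant}, and \cref{L:ToStaticPreservesValue} to get $\OPT[(f)] = \OPT[(f')] = \OPT[(f'')] = \OPT[(g)]$. For the value inequality, \cref{L:RoundDoesNoHarm} gives $\val[(g)]{z'} \leq \val[(g)]{z}$, and \eqref{E:PiecewiseConstantFromVec} preserves the objective exactly, since each constant $\vouti{s}$ integrates to $\abs{\TIntervals_i}\vouti{s}$ over $\TIntervals_i$. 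The main obstacle is bounding $\val[(f'')]{z''} \leq \val[(g)]{z'}$: the second equality of \eqref{E:CumPurgerPreservesValue} relied on the optimality of its input to claim that the production arcs $(\ssource, s_I)$ are saturated in increasing order of cost, which for a merely feasible $z'$ need not hold. However, because every $\prodcpus_s$ is non-decreasing, any reordering of production to use cheaper intervals first can only decrease the total $\costs_{(\ssource, s_I)}$ contribution, so the second equality of \eqref{E:CumPurgerPreservesValue} becomes a $\geq$ while every other equality in that chain remains valid; this yields $\val[(g)]{z'} \geq \val[(f'')]{z''}$. Chaining these inequalities gives $\val[(f)]{\otild{z}} \leq \val[(g)]{z}$, and dividing by $\OPT[(f)] = \OPT[(g)]$ yields the required approximation inequality.
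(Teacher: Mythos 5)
Your proposal is correct and follows essentially the same route as the paper: it chains \cref{L:RoundDoesNoHarm}, the $\otild{\CumPurger}$ argument (with the second equality of \eqref{E:CumPurgerPreservesValue} weakened to $\geq$ for merely feasible points, exactly as in \cref{Pr:HardCPReducesToCP}), the construction \eqref{E:PiecewiseConstantFromVec}, and the optimal-value equality via \cref{C:OPAndFunCPShareOptima}, \cref{L:OptIsConstant}, and \cref{L:ToStaticPreservesValue}. You merely spell out details (e.g.\ the cumulative-supply constraint for $z''$ and the greedy-filling argument) that the paper's proof delegates to ``the arguments used in their proofs.''
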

\begin{proof}
    Assume $(f)$ is an instance of \cref{M:OP}, $(g)$ is the output of \cref{A:GeneralizedConstantToCP}, $z$ is a $(g)$-feasible point, and $z'$ is the output of \cref{A:CPToGeneralizedConstant} on $z$. By \crefrange{SS:OPToFunCP}{SS:HardCPToCP}, $(g)$ is an instance of \cref{M:CP}. Moreover, by \cref{Pr:FunCPReducesToHardCP,Pr:HardCPReducesToCP,L:RoundDoesNoHarm} and the arguments used in their proofs, $z'$ is an $(f)$-feasible point of at most the same $(f)$-value as the $(g)$-value of $z$. Finally, by \cref{L:ToStaticPreservesValue} and the constructions of \crefrange{M:OP}{M:HardCP}, we know that $(f)$ and $(g)$ have the same optimal value. We hence conclude that $(\text{\cref{A:GeneralizedConstantToCP}}, \text{\cref{A:CPToGeneralizedConstant}})$ is a reduction from \cref{M:OP} to \cref{M:CP}, thus from \cref{P:GeneralizedConstant} to \cref{M:CP}.
\end{proof}

\begin{corollary}\label{C:SimpleConstantReducesToQCQP}
    \cref{P:SimpleConstant} reduces to \cref{M:QCQP} through $(\text{\cref{A:GeneralizedConstantToCP}}, \text{\cref{A:CPToGeneralizedConstant}})$.
\end{corollary}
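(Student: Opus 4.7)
The plan is to derive this as a direct specialization of \cref{Pr:GeneralizedConstantReducesToCP}: the same pair $(\text{\cref{A:GeneralizedConstantToCP}}, \text{\cref{A:CPToGeneralizedConstant}})$ is re-used verbatim, and the only thing that must be re-checked is that on a \cref{P:SimpleConstant} input the $\InsMap$ lands in the stricter target class \cref{M:QCQP} rather than just in \cref{M:CP}. Once that is established, the polynomial-time bounds, the feasibility of the $\SolMap$-image, and the approximation-preserving inequality are all inherited for free, since any \cref{M:QCQP}-feasible point is automatically \cref{M:CP}-feasible on the same underlying graph and since \cref{P:SimpleConstant} is a genuine restriction of \cref{P:GeneralizedConstant}.

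Concretely, first I would view the given \cref{P:SimpleConstant} instance as a \cref{P:GeneralizedConstant} instance whose SPGG happens to satisfy the quadratic-loss form $\outfuns_a(x) = x - r_a x^2$ on every original arc and whose $\prodcpus_s$ are piecewise constant in the cumulative-production argument; both of these specialize (rather than weaken) the GPGG hypotheses, so \cref{Pr:GeneralizedConstantReducesToCP} applies without modification and produces a \cref{M:CP} instance $(g)$ together with the already-proved reduction guarantees.

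Next I would walk through the three arc classes that \cref{A:ToStatic} introduces in $G_{(g)}$ and certify that each one fits the \cref{M:QCQP} templates $\outfuns_a(x) = \outscaler_a x - r_a x^2$ and $\costs_a(x) = \clin_a x + \cquad_a x^2$. The copied original arcs inherit $\outscaler_a = 1$, $r_a \geq 0$ and $\costs_a \equiv 0$, so $\clin_a = \cquad_a = 0$. The rate arcs $(s_I, s_i)$ have the linear $\outfuns_{a_{Ii}}(x) = x/(\abs{\TIntervals_i}/\MinTIntervalLen)$, giving $\outscaler_{a_{Ii}} = \MinTIntervalLen/\abs{\TIntervals_i} \in [0,1]$, $r_{a_{Ii}} = 0$ and zero cost. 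The super-source arcs $(\ssource, s_I)$ have $\outfuns_{a_I} = \Id$, i.e.\ $\outscaler_{a_I} = 1$ and $r_{a_I} = 0$, and cost $\costs_{a_I}(x) = \int_{I^{min}}^{I^{min}+\MinTIntervalLen x}\prodcpus_s(\alpha)\,d\alpha$; since $I$ is a maximal twice-differentiable piece of the piecewise-constant $\prodcpus_s$ the integrand is constant on $I$, and the capacity $\caps_{a_I} = \abs{I}/\MinTIntervalLen$ forces the upper limit to stay inside $I$ for every feasible $x$, so $\costs_{a_I}$ is linear in $x$ with $\cquad_{a_I} = 0$ and $\clin_{a_I} \geq 0$.

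The main (and really only) obstacle in this chain is the last class of arcs: one has to be careful that the integral defining $\costs_{a_I}$ is linear on the \emph{whole} admissible range of $x$, not merely on initial segments that happen to lie inside one constant piece of $\prodcpus_s$. The capacity chosen by \cref{A:ToStatic} is precisely what closes this gap. With all three arc templates verified, $(g)$ is a bona fide \cref{M:QCQP} instance, and the reduction statement of \cref{Pr:GeneralizedConstantReducesToCP} specializes to the claimed reduction from \cref{P:SimpleConstant} to \cref{M:QCQP} through the same $(\text{\cref{A:GeneralizedConstantToCP}}, \text{\cref{A:CPToGeneralizedConstant}})$.
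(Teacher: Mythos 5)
Your proposal is correct and follows the same route the paper intends: the paper states this corollary without a separate proof, relying on \cref{Pr:GeneralizedConstantReducesToCP} together with the remark in the ``Special case: quadratic losses'' subsubsection that \cref{A:ToStatic} applied to such instances yields a graph whose arc costs and out-flow functions fit the \cref{M:QCQP} template, so the $\InsMap$ lands in \cref{M:QCQP}. Your arc-by-arc verification (original arcs, rate arcs $(s_I,s_i)$, and super-source arcs $(\ssource,s_I)$ with the capacity $\abs{I}/\MinTIntervalLen$ keeping the cost integral inside one constant piece of $\prodcpus_s$) simply spells out the detail the paper asserts, and the inherited reduction guarantees are exactly as you describe.
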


\subsection{Reducing the original problem}\label{SS:SolvingOriginal}
So far we have only looked at reductions of \cref{P:SimpleConstant,P:GeneralizedConstant}. It is however \cref{P:ConstantOP} that we are trying to solve, hence this is the problem that we actually want to reduce. The first instinct might be to seek a reduction from \cref{P:ConstantOP} to \cref{P:SimpleConstant}. However, as for reducing \cref{M:OP} to \cref{M:FunCP}, this is challenging without additional constraints on \cref{P:SimpleConstant}, and so we will here instead reduce \cref{P:ConstantOP} directly to \cref{M:QCQP}. As we already know how to reduce \cref{P:SimpleConstant} to \cref{M:QCQP}, we will for $\InsMap$ map from an instance $(f)$ of \cref{P:ConstantOP} to an instance $(g)$ of \cref{P:SimpleConstant}, and then employ \cref{A:GeneralizedConstantToCP} to map this $(g)$ to an instance $(h)$ of \cref{M:QCQP}. \cref{A:SimplifyConstant} performs that initial map from $(f)$ to $(g)$, or rather, to be exact, from the graph $G$ corresponding to $(f)$ to the graph $G'$ corresponding to $(g)$.

\begin{algorithm}[htbp]
\caption{Map \cref{P:ConstantOP} instances to \cref{P:SimpleConstant} instances}\label{A:SimplifyConstant}
\KwData{\ac{UPGG} $G = (V,E,r,\caps,\supplies,\csupplies,\dfuns,\prodcpus)$ as in \cref{P:ConstantOP}}
\KwResult{\ac{SPGG} $G' := (V',A,r,\caps,\csupplies,\dfuns,\prodcpus)$ as in \cref{P:SimpleConstant}}
\begin{outline}[enumerate]
    \1 Define $G'=(V',A')$ as the directed version of $G=(V,E)$:
    \begin{align*}
        V' &:= V, \\
        A &:= \cup_{\{u,v\} \in E}\{(u,v),(v,u)\}.
    \end{align*}
    \1 $\forall{v \in V}:$
        \2 If $\prodcpus_v$ depends on out-flow rate, rather than cumulative outflow: For each non-empty maximal constant piece $I \subseteq [0,\supplies_v]$ of $\prodcpus_v$:
            \3 Add a $v^I$ to $V'$ and $\sources$, with $\prodcpus_{v^I}: [0,\abs{I}] \to \prodcpus(I)$.
            \3 Add $a:=(v^I,v)$ to $A$, with capacity $\caps_a:=\abs{I}$ and resistance $r_a:=0$.
        \2 Else if $\csupplies_{v} > 0$:
            \3 Add a $v^s$ to $\sources$ and $V'$, with $\prodcpus_{v^s}:=\prodcpus_{v}$.
            \3 Add $a:=(v^s,v)$ to $A$, with $\caps_a:=\supplies_v$ and $r_a:=0$.
        \2 Add a $v^d$ to $\sinks$ and $V'$, with $\dfuns_{v^d}:=\dfuns_{v}$.
        \2 \label{I:NodePSinkArc} Add $a:=(v,v^d)$ to $A$, with $\caps_{a} := 2\max_{t \in [0,\dur]}\dfuns_{v}(t)$ and $r_{a}:=0$.
    \1 Remove from $G'$ all nodes and arcs that are not part of some $\sources$-$\sinks$ path.
\end{outline}
\end{algorithm}

\begin{figure}[htbp]
    \centering
    \begin{subfigure}[t]{0.45\linewidth}
        \centering
        \begin{tikzpicture}
            \node[bus]      (u)                    {$u$};
            \node[bus]      (v)     [right=of u]   {$v$};
            
            \draw[-] (u) -- (v);
        \end{tikzpicture}
        \caption{Example instance of \cref{P:ConstantOP}.}
    \end{subfigure}
    \begin{subfigure}[t]{0.45\linewidth}
        \centering
        \begin{tikzpicture}
            \node[trans]      (u)                    {$u$};
            \node[trans]      (v)     [right=of u]   {$v$};
            \node[source]      (u^s)   [above=of u]     {$u^s$};
            \node[sink]      (u^d)   [left=of u]     {$u^\sink$};
            \node[source]        (v^{I})       [above=of v] {$v^{I}$};
            \node[source]        (v^{I'})       [below=of v] {$v^{I'}$};
            \node[sink]      (v^d)   [right=of v]     {$v^\sink$};
            
            \draw[->] (u) to [out=50,in=120] (v);
            \draw[->] (v) to [out=-140,in=-30] (u);
            \draw[->] (u^s) -- (u);
            \draw[->] (u) -- (u^d);
            \draw[->] (v^{I}) -- (v);
            \draw[->] (v^{I'}) -- (v);
            \draw[->] (v) -- (v^d);
        \end{tikzpicture}
        \caption{Corresponding output of \cref{A:SimplifyConstant}.}
    \end{subfigure}
    \caption{Input and output of \cref{A:SimplifyConstant} on a two-node graph with $V=\{u,v\}$. In this graph, $\prodcpus_u$ is a function of cumulative production while $\prodcpus_v$ is a function of production rate with one breakpoint.}
\end{figure}
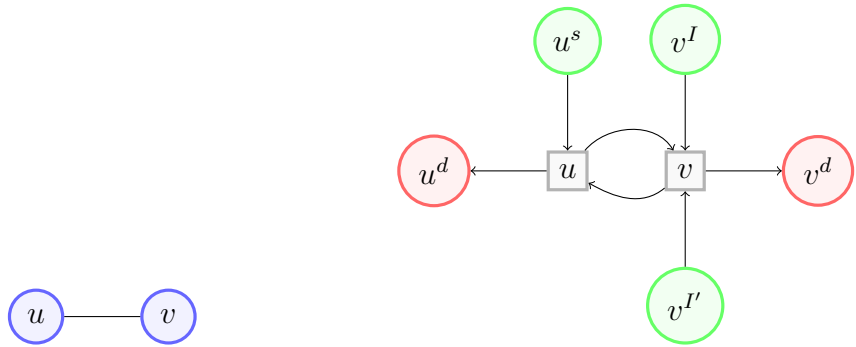

By similar arguments as in the proof of \cref{L:WasteIsReachableFromSource,Pr:HardCPReducesToCP}, \cref{A:SimplifyConstant} essentially acts as an $\InsMap$ from \cref{P:ConstantOP} to \cref{P:SimpleConstant}.

\begin{lemma}\label{L:SimplifyConstantIsInsMap}
    If $(f)$ is an instance of \cref{P:ConstantOP} with corresponding graph $G$, and $G'$ is the output of \cref{A:SimplifyConstant} on $G$. Then $G'$ corresponds to an instance $(g)$ of \cref{P:SimpleConstant}, and $\bits{(g)}$ and the execution time of obtaining this $(g)$ from $(f)$ is polynomial in $\bits{(f)}$.
\end{lemma}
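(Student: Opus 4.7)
The plan is to handle the two claims of the lemma separately: first, that $G'$ satisfies every structural requirement of an SPGG (and thus induces a valid instance $(g)$ of \cref{P:SimpleConstant}); second, that $\bits{(g)}$ and the running time of \cref{A:SimplifyConstant} are polynomial in $\bits{(f)}$.

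For the structural part, I would walk through the SPGG definition from \cref{S:Problem}. Directedness is immediate from Step 1. The sources $v^I$ and $v^s$ and sinks $v^d$ added in Step 2 are freshly created nodes, so $\sources \cap \sinks = \emptyset$ and $\sources, \sinks \subsetneq V'$ (every original $v \in V$ either survives as a non-source non-sink internal node or is pruned in Step 3). By inspecting Step 2, every source appears only as the tail of an arc and every sink only as the head, so sources have no in-arcs and sinks have no out-arcs. Reachability of every remaining node from $\sources$ is enforced by Step 3. The only non-trivial condition is that supplies and marginal production costs be expressed in terms of cumulative production: for a cumulative-capped node $v$ the algorithm simply inherits $\prodcpus_v$ for the new source $v^s$; for a rate-capped node, it splits $v$ into one source $v^I$ per maximal constant piece $I$ of $\prodcpus_v$, whose marginal cost is the constant $\prodcpus_v(I)$ (hence non-decreasing, positive, and piecewise constant with finitely many pieces), and whose rate limit is enforced via $\caps_{(v^I,v)} := \abs{I}$. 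These checks together show that $G'$ is an SPGG, so $(g)$ is well-posed.

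For the complexity claim, Step 2 processes each $v \in V$ exactly once, contributing at most $|\text{pieces of }\prodcpus_v| + 2$ new nodes and a constant-factor number of arcs. Since the number of pieces of each $\prodcpus_v$ is encoded explicitly in $(f)$, the totals $\abs{V'}$ and $\abs{A}$ are bounded by a polynomial in $\bits{(f)}$. Each parameter $r_a$, $\caps_a$, $\dfuns_{v^d}$, $\prodcpus_{v^I}$, $\prodcpus_{v^s}$ is either copied from $(f)$ or derived from a single entry of $(f)$ in $\BigOh(1)$ time, so $\bits{(g)}$ is polynomial in $\bits{(f)}$. Steps 1--2 then run in time linear in the output encoding, and Step 3 is a single $\sources$-rooted reachability search in $\BigOh(\abs{V'} + \abs{A})$ time, giving a total running time polynomial in $\bits{(f)}$.

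The only subtlety I anticipate is the case split in Step 2 between rate-capped and cumulative-capped nodes, which must be verified carefully to confirm that both cases yield SPGG-valid sources with non-decreasing piecewise-constant marginal cost expressed over cumulative production; beyond that, the proof is essentially a checklist against the SPGG definition together with elementary size accounting, which is why the original statement can afford to cite the arguments of \cref{L:WasteIsReachableFromSource,Pr:HardCPReducesToCP} rather than redo them.
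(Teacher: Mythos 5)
Your proof is correct, and it actually supplies more detail than the paper itself, which states \cref{L:SimplifyConstantIsInsMap} without a proof and merely remarks that it follows ``by similar arguments'' to \cref{L:WasteIsReachableFromSource,Pr:HardCPReducesToCP}; your checklist against the SPGG requirements plus the piece-counting/size accounting (which mirrors the bit-counting in \cref{Pr:HardCPReducesToCP}) is exactly the verification being gestured at. One small imprecision: Step 3 of \cref{A:SimplifyConstant} retains only nodes and arcs lying on some $\sources$-$\sinks$ path, so it is a forward search from $\sources$ combined with a backward search from $\sinks$ rather than a single $\sources$-rooted reachability sweep, which changes nothing about the structural conclusion or the linear running time.
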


As above, let $(f)$ be some instance of \cref{P:ConstantOP}, $(g)$ the corresponding instance of \cref{P:SimpleConstant} upon applying \cref{A:SimplifyConstant}, and $(h)$ the corresponding instance of \cref{M:QCQP} upon applying the whole $\InsMap$. To map a feasible point $z'$ of $(g)$ to a feasible point $z$ of $(f)$, we must merge all antiparallel arc flows, eliminate all the waste this introduces, and delete all entries of $z'$ corresponding to arcs added in \cref{A:SimplifyConstant}. As previously discussed, this can be hard to do for a feasible point of \cref{P:SimpleConstant}, yet it is highly doable for a feasible point $z''$ of $(h)$. In \cref{Pr:ConstantOPReducesToQCQP}, we will show that a valid $\SolMap$ for reducing \cref{P:ConstantOP} to \cref{M:QCQP} is to first call \cref{A:MergeAntiparallel}, then to call \cref{A:CPToGeneralizedConstant}, and finally to delete all set entries corresponding to arcs added in \cref{A:SimplifyConstant}.

\begin{algorithm}[htbp]
\caption{Merge antiparallel flows}\label{A:MergeAntiparallel}
\KwData{$G=(V,A)$ and $x,y \in \R_{\geq0}^{A}$}
\KwResult{$x,y$}
    \begin{outline}[enumerate]
        \1 $\forall{\text{antiparallel }a,a' \in A}$:
            \2 Define $\omega := \argmin_{q \in \{a,a'\}}y_q$ and $\Omega := \argmax_{q \in \{a,a'\}}y_q$ if $y_a \neq y_{a'}$ else $\omega=a$ and $\Omega=a'$
            \2 $x_\Omega \longleftarrow x_\Omega - y_\omega$
            \2 $y_\Omega \longleftarrow \outfuns_\Omega(x_\Omega)$
            \2 $x_\omega,y_\omega \longleftarrow 0$
    \end{outline}
\end{algorithm}

\begin{lemma}\label{L:MergingDoesNoHarm}
    If $(h)$ is an instance of \cref{M:QCQP} and $z=(x,y)$ is a feasible point of $(h)$, then applying \cref{A:MergeAntiparallel} to $z$ gives a feasible point $z'=(x',y')$ of $(h)$ with at most the same value as $z$.
\end{lemma}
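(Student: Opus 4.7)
The plan is to analyze one antiparallel pair at a time: since distinct antiparallel pairs processed by \cref{A:MergeAntiparallel} modify disjoint sets of variables, it suffices to show that a single iteration maps a feasible point of $(h)$ to another feasible point of no greater objective value. Fix such a pair $\{a, a'\}$ with $a=(u,v)$ and $a'=(v,u)$, and let $\omega, \Omega$ be as chosen by the algorithm.

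First I would dispose of the arc-level constraints. The new $x_\Omega := x_\Omega^{\text{old}} - y_\omega^{\text{old}}$ is non-negative because $y_\omega \leq y_\Omega \leq \outfuns_\Omega(x_\Omega^{\text{old}}) \leq x_\Omega^{\text{old}}$, where the last step uses $\outfuns_\Omega \leq \Id$; the capacity $x_\Omega \leq \caps_\Omega$ is preserved since $x_\Omega$ only decreases, and the quadratic arc constraint for $\Omega$ becomes an equality once $y_\Omega$ is reassigned to $\outfuns_\Omega(x_\Omega^{\text{new}})$. For $\omega$, setting $x_\omega = y_\omega = 0$ trivially satisfies every \cref{M:QCQP} constraint restricted to that arc.

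The hard part will be preserving the node inequalities $\vin{v'} \geq \vout{v'}$ and $\vin{\sink} \geq \demands_\sink$ at the two endpoints. At $u$, the in-flow loses $y_\omega^{\text{old}}$ (from the old flow on $\omega = (v,u)$) while the out-flow loses $y_\omega^{\text{old}}$ (since $x_\Omega$ drops by exactly that amount), so $\vin{u} - \vout{u}$ is unchanged. At $v$, $\vout{v}$ drops by the full $x_\omega^{\text{old}}$ and $\vin{v}$ changes by $\outfuns_\Omega(x_\Omega^{\text{new}}) - y_\Omega^{\text{old}}$, so the net change in $\vin{v} - \vout{v}$ is
\[
\outfuns_\Omega(x_\Omega^{\text{new}}) - y_\Omega^{\text{old}} + x_\omega^{\text{old}}.
\]
I plan to show this is non-negative by chaining three bounds: (i) $y_\Omega^{\text{old}} \leq \outfuns_\Omega(x_\Omega^{\text{old}})$ from feasibility of $z$; (ii) $\outfuns_\Omega(x_\Omega^{\text{old}}) - \outfuns_\Omega(x_\Omega^{\text{new}}) \leq y_\omega^{\text{old}}$, which follows from $\outfuns_\Omega$ being $1$-Lipschitz on $[0,\caps_\Omega]$ (immediate from the QCQP form $\outfuns_\Omega(x) = \outscaler_\Omega x - r_\Omega x^2$ with $\outscaler_\Omega \in [0,1]$ and $r_\Omega \geq 0$); and (iii) $y_\omega^{\text{old}} \leq x_\omega^{\text{old}}$, again from $\outfuns_\omega \leq \Id$. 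Together these preserve every \cref{M:QCQP} node inequality at $v$, whether $v$ is an interior node or a sink.

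Finally, since the objective depends only on $x$, only $x_\Omega$ (weakly) and $x_\omega$ (down to zero) decrease, and $\clin_a, \cquad_a \geq 0$, so the new objective is at most the old one, closing the argument.
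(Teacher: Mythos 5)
Your proposal is correct and takes essentially the same route as the paper's own proof: the paper likewise argues pair by pair (phrased as an induction over the loop), notes the exact cancellation of in- and out-flow changes at one endpoint, and at the other endpoint chains $y_\Omega \leq \outfuns_\Omega(x_\Omega)$, the sublinearity ($1$-Lipschitzness) of $\outfuns_\Omega$, and $y_\omega \leq x_\omega$, with the objective decrease following from the componentwise decrease of $x$ and the non-negative cost coefficients. The only cosmetic differences are your explicit check that $x_\Omega$ stays non-negative and your disjoint-variables remark in place of the paper's induction bookkeeping.
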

\begin{proof}
    We will prove the statement by induction on the loop in \cref{A:MergeAntiparallel}. Denote the state right after performing $i$ iterations of the loop by $z^{i}$. Trivially, the value of $z^0$ equals the value of $z$. Moreover, by assumption, $z^0$ is feasible for $(h)$.
    
    Assume now that $z^{i-1}$ is feasible for $(h)$ and define $\omega,\Omega \in A$ as in iteration $i$ of the loop in \cref{A:MergeAntiparallel}. Then,
    \begin{align*}
        x_\omega^{i} &= 0 \leq x_\omega^{i-1}\\
        x_\Omega^{i} &= x_\Omega^{i-1} - y_\omega^{i-1} \leq x_\Omega^{i-1}\\
        x_a^{i} &= x_a^{i-1}, \ \forall{a \in A \setminus \{\omega,\Omega\}},
    \end{align*}
    hence the cost of $z^{i}$ is no higher than the cost of $z^{i-1}$.
    
    Pick now $u,v \in V$ such that $\omega=(u,v)$ and $\Omega=(v,u)$. Then,
    \begin{align*}
        &\vout{u}^{i}-\vin{u}^{i} \\
        &= \left[\vout{u}^{i-1} + \left(x_\omega^{i} - x_\omega^{i-1}\right)\right] - \left[\vin{u}^{i-1} + \left(y_\Omega^{i} - y_\Omega^{i-1}\right)\right] \\
        &= \vout{u}^{i-1} - \vin{u}^{i-1} + \left(y_\Omega^{i-1} - y_\Omega^{i} - x_\omega^{i-1}\right) \\
        &= \vout{u}^{i-1} - \vin{u}^{i-1} + \left(y_\Omega^{i-1} - \outfuns_\Omega(x_\Omega^{i}) - x_\omega^{i-1}\right) \\
        &= \vout{u}^{i-1} - \vin{u}^{i-1} + \left(y_\Omega^{i-1} - \outfuns_\Omega(x_\Omega^{i-1} - y_\omega^{i-1}) - x_\omega^{i-1}\right) \\
        &\leq \vout{u}^{i-1} - \vin{u}^{i-1} + \left(\outfuns_\Omega(x_\Omega^{i-1}) - \outfuns_\Omega(x_\Omega^{i-1} - y_\omega^{i-1}) - x_\omega^{i-1}\right) \\
        &\leq \vout{u}^{i-1} - \vin{u}^{i-1} + \left(y_\omega^{i-1} - x_\omega^{i-1}\right) \\
        &\leq \vout{u}^{i-1} - \vin{u}^{i-1} \\
        &\leq 0.
    \end{align*}
    Here, the first inequality made use of the sublinearity of $\outfuns_a$. Moreover,
    \begin{align*}
        &\vout{v}^{i}-\vin{v}^{i} \\
        &= \left[\vout{v}^{i-1} + \left(x_\Omega^{i} - x_\Omega^{i-1}\right)\right] - \left[\vin{v}^{i-1} + \left(y_\omega^{i} - y_\omega^{i-1}\right)\right] \\
        &= \vout{v}^{i-1} - \vin{v}^{i-1} + \left(x_\Omega^{i} - x_\Omega^{i-1} + y_\omega^{i-1}\right) \\
        &= \vout{v}^{i-1} - \vin{v}^{i-1} \\
        &\leq 0.
    \end{align*}
    Hence, $z^{i}$ is feasible for $(h)$.
    
    By induction, we conclude that the statement holds.
\end{proof}

\begin{algorithm}[htbp]
\caption{Map \cref{P:ConstantOP} instances to \cref{M:QCQP} instances}\label{A:ConstantOPToQCQP}
\KwData{\cref{P:ConstantOP} instance $(f)$ with corresponding \ac{UPGG} $G_{(f)}$.}
\KwResult{\cref{M:QCQP} instance $(g)$.}
    \begin{outline}[enumerate]
        \1 \label{I:FirstReduction} $G \longleftarrow$ \cref{A:SimplifyConstant} on $G_{(f)}$
        \1 \label{I:SecondReduction} $G' \longleftarrow$ \cref{A:ToStatic} on $G$
        \1 \label{I:MakeInstance} $(g) \longleftarrow$ \cref{M:QCQP} instance corresponding to $G'$
        \1 Return $(g)$
    \end{outline}
\end{algorithm}

\begin{figure}[htbp]
    \centering
    \begin{subfigure}[t]{0.9\linewidth}
        \centering
        \begin{tikzpicture}
            \node[bus](u){$u$};
            \node[bus](v)[right=of u]{$v$};
            
            \draw[-] (u) -- (v);
        \end{tikzpicture}
        \caption{Input $G_{(f)}$.}
    \end{subfigure}
    \par\bigskip
    \begin{subfigure}[t]{0.9\linewidth}
        \centering
        \begin{tikzpicture}
            \node[trans] (u) {$u$};
            \node[trans] (v) [right=of u] {$v$};
            \node[source] (u^s) [above=of u] {$u^s$};
            \node[sink] (u^d) [left=of u] {$u^\sink$};
            \node[source] (v^{I}) [above=of v] {$v^{I}$};
            \node[source] (v^{I'}) [below=of v] {$v^{I'}$};
            \node[sink] (v^d) [right=of v] {$v^\sink$};
            
            \draw[->] (u) to [out=60,in=120] (v);
            \draw[->] (v) to [out=-150,in=-30] (u);
            \draw[->] (u^s) -- (u);
            \draw[->] (u) -- (u^d);
            \draw[->] (v^{I}) -- (v);
            \draw[->] (v^{I'}) -- (v);
            \draw[->] (v) -- (v^d);
        \end{tikzpicture}
        \caption{$G$ obtained in Step \ref{I:FirstReduction}.}
    \end{subfigure}
    \par\bigskip
    \begin{subfigure}[t]{0.9\linewidth}
        \centering
        \begin{tikzpicture}
            \node[source](s^*){$s^*$};
            \node[trans](u_{I'}^s)[below left=of s^*]{$u_{I'}^s$};
            \node[trans](u_{I}^s)[left=of u_{I'}^s]{$u_{I}^s$};
            \node[trans](u_1^s)[below=of u_{I}^s]{$u_1^s$};
            \node[trans](u_2^s)[below=of u_{I'}^s]{$u_2^s$};
            \node[trans](u_1)[below=of u_1^s]{$u_1$};
            \node[trans](u_2)[below=of u_2^s]{$u_2$};
            \node[sink](u_1^d)[below=of u_1]{$u_1^d$};
            \node[sink](u_2^d)[below=of u_2]{$u_2^d$};
            \node[trans](v_{I}^{I})[below right=of s^*]{$v_{I}^{I}$};
            \node[trans](v_{I'}^{I'})[right=of v_{I}^{I}]{$v_{I'}^{I'}$};
            \node[trans](v_1^{I})[below left=of v_{I}^{I}]{$v_1^{I}$};
            \node[trans](v_2^{I})[below=of v_{I}^{I}]{$v_2^{I}$};
            \node[trans](v_1^{I'})[below=of v_{I'}^{I'}]{$v_1^{I'}$};
            \node[trans](v_2^{I'})[below right=of v_{I'}^{I'}]{$v_2^{I'}$};
            \node[trans](v_1)[below=of v_2^{I}]{$v_1$};
            \node[trans](v_2)[below=of v_1^{I'}]{$v_2$};
            \node[sink](v_1^d)[below=of v_1]{$v_1^d$};
            \node[sink](v_2^d)[below=of v_2]{$v_2^d$};
            
            \draw[->] (s^*) -- (u_{I}^s);
            \draw[->] (s^*) -- (u_{I'}^s);
            \draw[->] (u_{I}^s) -- (u_1^s);
            \draw[->] (u_{I}^s) -- (u_2^s);
            \draw[->] (u_{I'}^s) -- (u_1^s);
            \draw[->] (u_{I'}^s) -- (u_2^s);
            \draw[->] (u_1^s) -- (u_1);
            \draw[->] (u_2^s) -- (u_2);
            \draw[->] (u_1) -- (u_1^d);
            \draw[->] (u_2) -- (u_2^d);
            \draw[->] (s^*) -- (v_{I}^{I});
            \draw[->] (s^*) -- (v_{I'}^{I'});
            \draw[->] (v_{I}^{I}) -- (v_1^{I});
            \draw[->] (v_{I}^{I}) -- (v_2^{I});
            \draw[->] (v_{I'}^{I'}) -- (v_1^{I'});
            \draw[->] (v_{I'}^{I'}) -- (v_2^{I'});
            \draw[->] (v_1^{I}) -- (v_1);
            \draw[->] (v_1^{I'}) -- (v_1);
            \draw[->] (v_2^{I}) -- (v_2);
            \draw[->] (v_2^{I'}) -- (v_2);
            \draw[->] (v_1) -- (v_1^d);
            \draw[->] (v_2) -- (v_2^d);
            \draw[->] (u_1) to [out=25,in=155] (v_1);
            \draw[<-] (u_1) to [out=-25,in=205] (v_1);
            \draw[->] (u_2) to [out=25,in=155] (v_2);
            \draw[<-] (u_2) to [out=-25,in=205] (v_2);
        \end{tikzpicture}
        \caption{$G'$ obtained in Step \ref{I:SecondReduction}.}
    \end{subfigure}
    \caption{Input and output of \cref{A:ConstantOPToQCQP} on a two-node graph with $V=\{u,v\}$ and one breakpoint in $\dfuns$. In this graph, $\prodcpus_u$ is a function of cumulative production with one breakpoint while $\prodcpus_v$ is a function of production rate with one breakpoint.}
\end{figure}
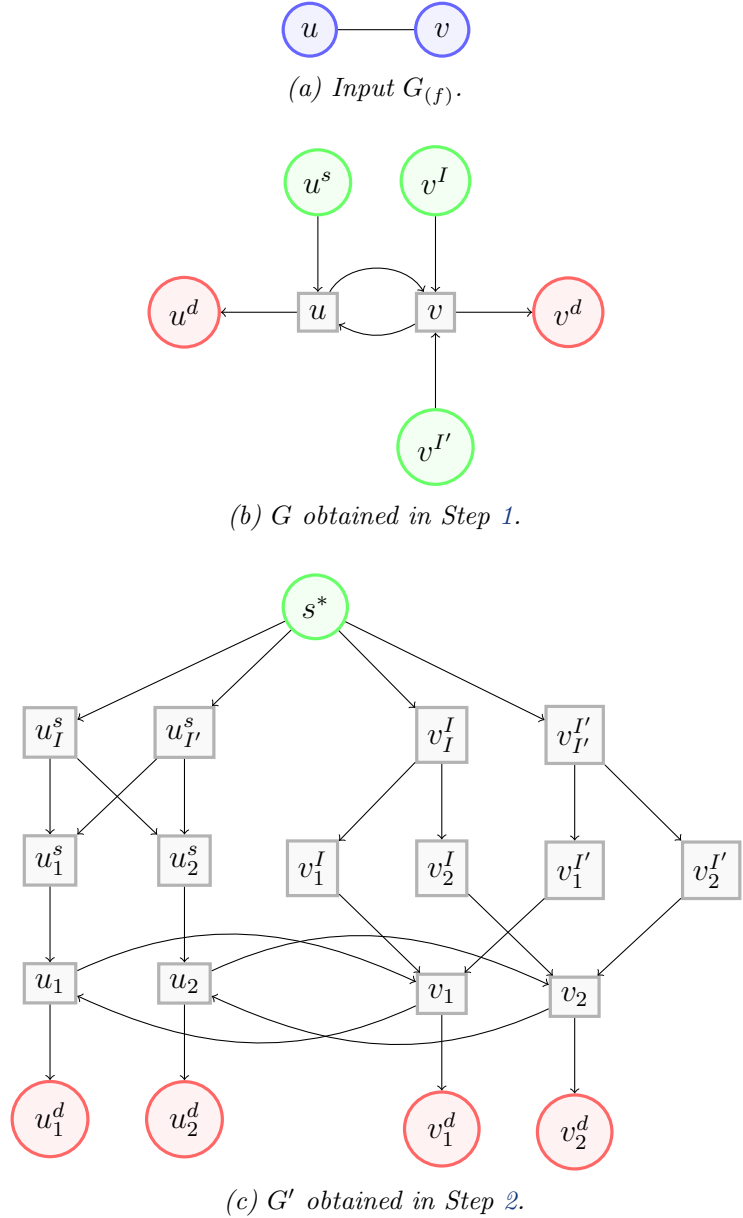

\begin{algorithm}[htbp]
\caption{Map feasible points of \cref{M:QCQP} instances to feasible points of \cref{P:ConstantOP} instances}\label{A:QCQPToConstantOP}
\KwData{\cref{M:QCQP} instance $(f)$ and $(f)$-feasible $z=(x,y)$.}
\KwResult{$\otild{z}$}
    \begin{outline}[enumerate]
        \1 \label{I:Merge} $z' \longleftarrow$ \cref{A:MergeAntiparallel} on $z$ and $G$
        \1 \label{I:Round} $z'' \longleftarrow$ \cref{A:Round} on $z'$ and $G$
        \1 \label{I:Purge} $z''' \longleftarrow$ $\forall v \in \text{Neighbors}(\text{Neighbors}(\ssource)) \cup \sinks: \forall a \in \delta_v:$ del $z''_a$
        \1 \label{I:VecToFunc} $\otild{z} \longleftarrow$ \eqref{E:PiecewiseConstantFromVec} with $z'''$
    \end{outline}
\end{algorithm}

\begin{proposition}\label{Pr:ConstantOPReducesToQCQP}
    \cref{P:ConstantOP} reduces to \cref{M:QCQP} through $(\text{\cref{A:ConstantOPToQCQP}}, \text{\cref{A:QCQPToConstantOP}})$.
\end{proposition}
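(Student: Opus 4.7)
The plan is to verify the three conditions of \cref{D:Reduction} for the pair $(\text{\cref{A:ConstantOPToQCQP}},\text{\cref{A:QCQPToConstantOP}})$. For condition 1, \cref{L:SimplifyConstantIsInsMap} already tells us that Step~\ref{I:FirstReduction} of \cref{A:ConstantOPToQCQP} produces an \ac{SPGG} underlying a valid \cref{P:SimpleConstant} instance; then, by reapplying the argument of \cref{Pr:HardCPReducesToCP} specialized to quadratic losses and piecewise-constant marginal costs (exactly as in the special-case paragraph leading to \cref{M:QCQP}), Steps~\ref{I:SecondReduction}--\ref{I:MakeInstance} produce a valid \cref{M:QCQP} instance $(g)$. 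Condition~2 (polynomial running time and polynomial output size of $\InsMap$) combines \cref{L:SimplifyConstantIsInsMap} with the polynomial-size analysis already performed in \cref{Pr:FunCPReducesToHardCP,Pr:HardCPReducesToCP}, so no new ideas are needed there.

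For condition~3 I would first argue that $\SolMap$ is polynomial-time: Step~\ref{I:Merge} of \cref{A:QCQPToConstantOP} runs in $\BigOh(\abs{A})$, Step~\ref{I:Round} in $\BigOh(\abs{A}^2)$ by \cref{L:RoundDoesNoHarm}, and Steps~\ref{I:Purge}--\ref{I:VecToFunc} are linear in $\abs{V}+\abs{A}$. Feasibility of the output would be established by chaining the three work-horses: \cref{L:MergingDoesNoHarm} shows that after Step~\ref{I:Merge} the point $z'$ is still $(g)$-feasible and costs no more than $z$; \cref{L:RoundDoesNoHarm} shows that $z''$ after Step~\ref{I:Round} is feasible for the equality-constrained \cref{M:EffOP} and still costs no more; and Steps~\ref{I:Purge}--\ref{I:VecToFunc} act as the inverse of the graph transformations \cref{A:ToStatic} and \cref{A:SimplifyConstant}. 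The purge removes exactly the arcs added by \cref{A:ToStatic} (acting as $\otild{\CumPurger}$, justified by \cref{L:ToStaticPreservesValue} and Eq.~\eqref{E:CumPurgerPreservesValue}) and the super-source/super-sink arcs added by \cref{A:SimplifyConstant}; from the retained bus-to-bus entries, Step~\ref{I:VecToFunc} then builds a piecewise-constant dynamic flow on the directed version of $G_{(f)}$. Because \cref{A:MergeAntiparallel} has already zeroed one arc of every antiparallel pair in each time copy, the reconstructed flow uses at most one direction per edge per piece, hence qualifies as a flow on the undirected graph $G_{(f)}$ in the sense of the last paragraph of \cref{S:Background}. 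The cumulative-flow bounds on super-source arcs translate back into the cumulative-production and rate caps of \cref{P:ConstantOP}, which together with demand satisfaction at the sink arcs yields $(f)$-feasibility.

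To finish, I would establish the value inequality
\begin{equation*}
    \frac{\val[(f)]{\SolMap(z)}}{\OPT[(f)]} \leq \frac{\val[(g)]{z}}{\OPT[(g)]}
\end{equation*}
by proving two things: $\val[(f)]{\SolMap(z)} \leq \val[(g)]{z}$ for every $(g)$-feasible $z$, and $\OPT[(f)] = \OPT[(g)]$. The first is immediate from the preceding paragraph, since every step of $\SolMap$ is value-non-increasing and the value-preservation computation \eqref{E:CumPurgerPreservesValue} matches the accumulated arc costs on super-source arcs with the production-cost integrals of \cref{P:ConstantOP}. For the optima equality, I would produce a matching lower bound by lifting any $(f)$-feasible schedule/flow to a $(g)$-feasible point of the same cost: fix any orientation per edge per piece to resolve the undirected-to-directed step of \cref{A:SimplifyConstant}, then piecewise-constantly embed into the graph copies of \cref{A:ToStatic} and push the total production of each source through the appropriate super-source arcs in the order dictated by the non-decreasing $\prodcpus_s$. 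Combined with $\val[(f)]{\SolMap(z)}\le\val[(g)]{z}$ at an optimal $z$, this gives $\OPT[(f)]=\OPT[(g)]$.

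The main obstacle I expect is the bookkeeping around the antiparallel-merge step: \cref{L:MergingDoesNoHarm} is stated statically per \cref{M:QCQP} instance, but we also need to ensure that after merging, applying \cref{A:Round} on top and then projecting down still yields something that can be read as an undirected flow on each time slice without reintroducing any antiparallel usage. I would handle this by observing that \cref{A:Round} only ever \emph{decreases} arc flows and thus cannot turn a zero antiparallel entry nonzero, so the invariant "at most one of each antiparallel pair is nonzero" is preserved through all subsequent steps. Once that invariant is in place, the reduction argument reduces to composing \cref{Pr:GeneralizedConstantReducesToCP} and \cref{L:SimplifyConstantIsInsMap}, and the proposition follows.
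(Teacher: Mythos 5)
Most of your argument tracks the paper's own proof: the $\InsMap$ is justified by composing \cref{L:SimplifyConstantIsInsMap} with the earlier reductions, and the $\SolMap$ analysis chains \cref{L:MergingDoesNoHarm}, \cref{L:RoundDoesNoHarm} and the $\otild{\CumPurger}$-style purge exactly as the paper does; your observation that \cref{A:Round} only decreases arc flows, so the ``at most one of each antiparallel pair is nonzero'' invariant survives, is precisely the point the paper relies on (implicitly, via \cref{L:MergingDoesNoHarm,L:RoundDoesNoHarm}) to read the projected point as a flow on the undirected graph. The value-monotonicity of each $\SolMap$ step, giving $\val[(f)]{\SolMap(z)} \leq \val[(g)]{z}$, also matches.

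The gap is in your proof of $\OPT[(f)]=\OPT[(g)]$. You propose to lift \emph{any} $(f)$-feasible production schedule and dynamic flow to a $(g)$-feasible point of equal cost by ``fixing an orientation per edge per piece'' and ``piecewise-constantly embedding'' it into the time-expanded graph. But an $(f)$-feasible dynamic flow is only required to be piecewise uniformly continuous: within a single demand piece $\TIntervals_i$ its value can vary and its direction on an edge can switch, so there is no single orientation per edge per piece and no constant vector to embed. The missing step is the averaging argument of \cref{L:OptIsConstant}: replace the flow on each $\TIntervals_i$ by its time average, use concavity of $\outfuns$ and Jensen's inequality to verify the relaxed arc constraints $y_a \leq \outfuns_a(x_a)$ for the averages, and note that the cost and cumulative caps depend only on $\int_0^\dur \vout{s}(t)\,dt$, which averaging preserves (the averaged point is feasible only for the waste-permitting program, which is why the target is \cref{M:QCQP} rather than an equality-constrained program). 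Without this step your lifting construction fails for non-piecewise-constant feasible flows. The paper sidesteps the issue entirely by citing the already established chain of optimal-value equalities (\cref{C:OPAndFunCPShareOptima}, \cref{Pr:FunCPReducesToHardCP} via \cref{L:OptIsConstant}, and \cref{L:ToStaticPreservesValue}, together with the constructions of \cref{P:ConstantOP,P:SimpleConstant}); either invoke that chain or insert the Jensen/averaging argument explicitly, and your proof goes through.
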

\begin{proof}
    As noted for \cref{A:GeneralizedConstantToCP}, we can skip the reductions that do not modify the underlying graph. \cref{A:ConstantOPToQCQP} is the result of prepending a (redundant) step to \cref{A:GeneralizedConstantToCP} that maps a \cref{P:ConstantOP} instance to a \cref{P:SimpleConstant} instance, and then skipping all redundant steps. Hence, by \cref{Pr:GeneralizedConstantReducesToCP}, we know that \cref{A:ConstantOPToQCQP} is a valid $\InsMap$ for reducing from \cref{P:ConstantOP} to \cref{M:QCQP}.

    Assume now that we are given some instance $(f)$ of \cref{P:ConstantOP} and that $(f'),(f'')$, and $(f''')$ are the \cref{M:OP,M:FunCP,M:HardCP} instances implicitly produced during execution of \cref{A:ConstantOPToQCQP}. Assume moreover that we are given an $(f''')$-feasible $z$. By \cref{L:MergingDoesNoHarm,L:RoundDoesNoHarm}, we know that Steps \ref{I:Merge} and \ref{I:Round} of \cref{A:QCQPToConstantOP} run in time polynomial in $\bits{(f)}$ and that $z''$ is $(f''')$-feasible with at most the same value as $z$. Hence, if Step \ref{I:Purge} of \cref{A:QCQPToConstantOP} is replaced by
    \begin{equation*}
        z''' \longleftarrow \ \forall v \in \text{Neighbors}(\ssource): \forall a \in \delta_v: \text{ del } z''_a,
    \end{equation*}
    which is equivalent to
    \begin{equation*}
        z''' \longleftarrow \ \otild{\CumPurger}(z''),
    \end{equation*}
    then $\otild{z}$ is $(f'')$-feasible, by \cref{Pr:FunCPReducesToHardCP,Pr:HardCPReducesToCP}. However, by \cref{L:RoundDoesNoHarm}, $\otild{z}$ is then also $(f')$-feasible. Moreover, by \cref{L:MergingDoesNoHarm,L:RoundDoesNoHarm}, $\otild{z}$ has no pair of antiparallel arcs with flow on both arcs and the $(f')$-value of $\otild{z}$ is no higher than the $(f''')$-value of $z$. As the extra entries deleted in the true Step \ref{I:Purge} of \cref{A:QCQPToConstantOP} correspond to those added in the $\InsMap$ from \cref{P:ConstantOP} to \cref{P:SimpleConstant}, we conclude that applying \cref{A:QCQPToConstantOP} to $z$ returns an $(f)$-feasible $\otild{z}$ of no higher $(f)$-value than the $(f''')$-value of $z$.

    Finally, by \cref{L:ToStaticPreservesValue} and the constructions of \cref{P:ConstantOP,P:SimpleConstant} and \crefrange{M:OP}{M:HardCP}, we know that $(f)$ and $(f''')$ have the same optimal value. We thus conclude that $(\text{\cref{A:ConstantOPToQCQP}}, \text{\cref{A:QCQPToConstantOP}})$ is a reduction from \cref{P:ConstantOP} to \cref{M:QCQP}.
\end{proof}

\subsection{Min-cost production distribution at a single point in time}
Another question that may be of interest is to optimize production to minimize instantaneous costs at a single point in time, rather than over time. In this case we have no cumulative supplies, and $\prodcpus$ is instead only based on the instantaneous production rates. As before, we lose no generality by assuming that $G$ is directed, $V$ is reachable from $S$, sources have no in-arcs, and sinks have no out-arcs.

\begin{problem}[Min-cost production distribution at a single point in time]\label{P:StaticOP}
    Given: a power grid graph $G=(V,A,r,\caps,\supplies,\demands,\prodcpus)$ at a single point in time and nondecreasing and piecewise constant $\prodcpus: [0,\supplies] \to \R_+$ with finitely many breakpoints, where every source $s \in S \subseteq V$ has no in-arcs, every $\sink \in \sinks \subseteq V$ has no out-arcs, and every $v \in V$ is reachable from $S$.
    
    Task: find a min-cost production distribution and generalized flow $x$ that satisfies demands $\demands$ and respects all constraints and capacities.
\end{problem}

At first thought one would maybe think that \cref{P:SimpleConstant} reduces to \cref{P:StaticOP}, by acting according to \cref{P:StaticOP} at every time-step $t\in[0,\dur]$. This is however not the case, as the cumulative constraints of \cref{P:SimpleConstant} make the instantaneous costs and constraints in later time-steps dependent on the decisions taken in all earlier time-steps, as a solution that in each time-step acts greedily according to \cref{P:StaticOP} can in later time-steps find itself in an optimization problem with no good solution, thus becoming highly suboptimal. \cref{Ex:GreedyIsSuboptimal} demonstrates a simple example of this situation.

\begin{example}\label{Ex:GreedyIsSuboptimal}
    Assume $\dur=2$ and $G$ is as in \cref{P:SimpleConstant}, with $V=\sources\cup\sinks=\{s_1,s_2\}\cup\{d\}$, $A=\{(s_1,d),(s_2,d)\}$, $\dfuns_d:t \mapsto 1$, $\caps_{(s_1,d)}=1$, $\caps_{(s_2,d)}=\frac{1}{2}$, $\forall{a \in A}: r_a = 0$ (so $\outfuns_a=\Id$),
    \begin{equation*}
        \prodcpus_{s_1}: [0,2] \ni \beta \mapsto
        \begin{cases}
            1, &\text{if } \beta<1,\\
            L, &\text{if } \beta \geq 1,
        \end{cases}
    \end{equation*}
    with $L>2$, and
    \begin{equation*}
        \prodcpus_{s_2}: [0,2] \ni \beta \mapsto 2.
    \end{equation*}
    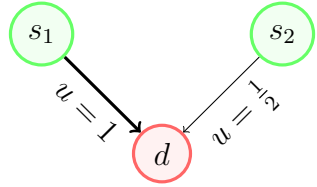
\begin{figure}[H]
        \centering
        \begin{tikzpicture}
            \node[sink]        (d)        {$d$};
            \node[source]      (s_1)     [above left=of d]   {$s_1$};
            \node[source]      (s_2)     [above right=of d]   {$s_2$};
            
            \draw[very thick, ->] (s_1) -- (d) node[pos=.5,sloped,below] {$\caps = 1$};
            \draw[->] (s_2) -- (d) node[pos=.5,sloped,below] {$\caps = \frac{1}{2}$};
        \end{tikzpicture}
        \caption{Graph of \cref{P:SimpleConstant} instance used in \cref{Ex:GreedyIsSuboptimal}. Arc thickness illustrates capacity.}
    \end{figure}
    
    Here $s_1$ is cheaper than $s_2$ until the cumulative supply of $s_1$ reaches $1$, at which point $s_2$ is cheaper than $s_1$ but has too little capacity on its out-arc to satisfy $d$ by itself. The solution that at every $t \in [0,\dur]$ acts greedily on \cref{P:StaticOP} will not look ahead, thus will output
    \begin{equation*}
        (x_{s_1}(t),x_{s_2}(t))=
        \begin{cases}
            (1,0), &\text{if } t<1,\\
            (\frac{1}{2},\frac{1}{2}), &\text{if } t \geq 1,
        \end{cases}
    \end{equation*}
    for a cumulative cost of
    \begin{equation*}
        1+(\frac{L}{2}+\frac{2}{2})=2+\frac{L}{2}.
    \end{equation*}
    
    The optimal solution will take into account that surpassing a cumulative supply of $1$ on $s_1$ is very expensive, and will thus act conservatively to ensure this never happens, e.g. through
    \begin{equation*}
        (x_{s_1}(t),x_{s_2}(t))=(\frac{1}{2},\frac{1}{2}),
    \end{equation*}    
    which has a cumulative cost of
    \begin{equation*}
        2\cdot(\frac{1}{2}+\frac{2}{2})=3.
    \end{equation*}
    
    We thus see that we can make the solution that in every time-step acts greedily on \cref{P:StaticOP} arbitrarily suboptimal for \cref{P:SimpleConstant} by making $L$ arbitrarily large.
\end{example}

The other direction however partially holds, with \cref{P:StaticOP} reducing to \cref{P:SimpleConstant} if we to the latter add the additional constraint that the solution must be constant, e.g. if we solve it by \cref{M:CP}. Such a reduction is achieved by starting at an instance $G$ of \cref{P:StaticOP} and creating an identical instance for \cref{P:SimpleConstant} with constant demands, $\dur:=1$, and $\csupplies:=\supplies$, as a constant optimal solution of \cref{P:SimpleConstant} will then solve \cref{P:StaticOP} at every $t \in [0,\dur]$.
    \section{Approximately solving Problem \ref{P:GeneralizedConstant}}\label{S:Boyd} Having successfully deduced a reduction from \cref{P:GeneralizedConstant} to \cref{M:CP}, we will now look at how \cref{M:CP} instances can be efficiently solved using the \textit{barrier method}. However, by the nature of the barrier method, we must introduce the additional requirement that these instances are \textit{strictly feasible}.

\begin{definition}[Strictly feasible]
    If $(f)$ is some program instance and $x$ is a feasible point of $(f)$, then we call $x$ a \textit{strictly feasible} point (of $(f)$) if it satisfies all inequalities constraints of $(f)$ strictly. If $(f)$ has a strictly feasible point, we say that $(f)$ is \textit{strictly feasible}. 
\end{definition}

In the case of \cref{M:CP}, a strictly feasible point corresponds to a generalized flow satisfying all demands and
\begin{equation}
\begin{array}{@{}r@{\;}ll}
    0 < y_a < \outfuns_a(x_a) &< \caps_a, &\forall{a \in A},\\
    \vin{v} &> \vout{v}, &\forall{v \in V \setminus (\sources \cup \sinks)},\\
    \vin{\sink} &> \demands_\sink, &\forall{\sink \in \sinks}.
\end{array}
\end{equation}
Simple reasoning then tells us that a \cref{M:CP} instance is strictly feasible iff it is possible to satisfy all its demands without fully utilizing any of its arcs.

\subsection{General runtime analysis with barrier method}

The following subsection is entirely based on \cite{boyd2004convex}.

\begin{program}[Three times differentiable CP]\label{M:General}
Given: Convex and three times differentiable $f_0,f_1,\ldots,f_m: \R^n \to \R$ and affine $h$.
\begin{mini*}{z}{f_0(z)}{}{}
    \addConstraint{f_i(z)}{\leq 0, \quad}{i \in [m]}
    \addConstraint{h(z)}{= 0}
\end{mini*}
\end{program}

Assume we are given a strictly feasible instance $(f)$ of \cref{M:General}. The barrier method then solves $(f)$ in two phases, with the first phase really just being the second phase applied to an auxiliary program instance.

Phase II starts at a strictly feasible point of $(f)$, and returns an $(f)$-feasible point $x$ such that
\begin{equation*}
    \val[(f)]{x} \leq \OPT[(f)] + \varepsilon.
\end{equation*}
Roughly speaking, it achieves this by solving a sequence of gradually closer approximations of $(f)$, using the solution of one program as the starting point for the next. Specifically, for a sequence
\begin{equation*}
    (t^{(0)},\mu t^{(0)},\mu^2 t^{(0)},\ldots,\mu^l t^{(0)}),
\end{equation*}
where
\begin{align*}
     t^{(0)} &> 0, \\
    \mu &= 1+1/\sqrt{m+1}, \\
    l &= \ceil*{\frac{\log{m}-\log{(\varepsilon t^{(0)})}}{\log{\mu}}},
\end{align*}
it employs Newton's method with backtracking line search to solve the corresponding instance $(f_{(t)})$ of \cref{M:CentralPath}.

\begin{program}[Central path program]\label{M:CentralPath}
Given: $t>0$ and $f_0,f_1,\ldots,f_m,h$ as in \cref{M:General}.
\begin{mini*}{z}{tf_0(z)-\sum_{i \in [m]}{\log(-f_i(z))}}{}{}
    \addConstraint{h(z)}{= 0}
\end{mini*}
\end{program}

If $z$ is the solution of $(f_{(t)})$ for $t=t'$ we say that it is on the \textit{central path} of $(f)$ and set $z_{(t')}:=z$.

As finding a strictly feasible initial point of $(f)$ is not always trivial, we first perform a Phase I to find such a point. The algorithm in Phase I is the same as the algorithm in Phase II, solving a sequence of central path programs. However, the difference is that in Phase I we have replaced $(f)$ with a corresponding instance $(g)$ of some auxiliary program. We want $(g)$ to be an instance of \cref{M:General}, have an easily obtainable strictly feasible point, and have an orthogonal projection of its central path intersect with that of $(f)$. We achieve all these objective by making $(g)$ an instance of \cref{M:PhaseI}.

\begin{program}[Phase I]\label{M:PhaseI}
Given: $f_0,f_1,\ldots,f_m,h$ as in \cref{M:General} and appropriately chosen $M$ and $a$.
\begin{mini*}{z,s}{s}{}{}
    \addConstraint{f_i(z)}{\leq s, \quad}{i \in [m]}
    \addConstraint{h(z)}{= 0}
    \addConstraint{f_0(z)}{\leq M}
    \addConstraint{a^Tx}{\leq 1}.
\end{mini*}
\end{program}

As adding redundant constraints to a program can change its corresponding central path programs, $a,s^{(0)}$ and $\tilde{t}^{(0)}$ are here chosen such that $a^Tx \leq 1$ is a redundant constraint for $(g)$ that makes $(0,s^{(0)})$ an optimal solution of $(g_{(\tilde{t}^{(0)})})$. The interested reader is referred to \cite{boyd2004convex} for details of exactly how such $a,s^{(0)},\tilde{t}^{(0)}$ are computed. For $M$, we are free to choose any number larger than $\OPT[(f)]$. Introducing the constraint $f_0(z) \leq M$ then ensures that there exists a reasonably small $\tilde{t}$ and a reasonably large $t$ such that if $(z,0)$ is an optimal solution of $(g_{(\tilde{t})})$, then $z$ is an optimal solution of $(f_{(t)})$.

By starting Phase I at $(0,s^{(0)})$, we thus ensure that we in Phase I are always on the central path of $(g)$ and then in Phase II are always on the central path of $(f)$. Relying on this, we can under some extra assumptions compute an upper bound on both
\begin{equation*}
    f_0(z_{(t)}) - \OPT[(f)]
\end{equation*}
and on the number of steps of Newton's method with backtracking line search required to for any $t>0$ solve $(f_{(\mu t)})$ to within some absolute error $\varepsilon'$ when starting at $z_{(t)}$. With these upper bounds, we can hence upper bound the number of Newton's method with backtracking line search steps required in Phase II. As the same holds for Phase I, we add these upper bounds together to arrive at an upper bound on the steps of Newton's method with backtracking line search required by the barrier method to solve $(f)$ to within some absolute error $\varepsilon$.

To be exact, the extra assumptions required to obtain these bounds are that every sublevel set of $(f)$ and $(g)$ are bounded and that the objective function of every visited $(f_{(t)})$ and $(g_{(t)})$ is self-concordant and closed.

\begin{definition}[Sublevel set]
    Given an instance $(f)$ of \cref{M:General}, the $\alpha$-sublevel set is the set of all feasible $z$ such that $\val[(f)]{z} \leq \alpha$. In other words, it is the set
    \begin{equation*}
        \{z \given f_0(z) \leq \alpha \a h(z)=0 \a \forall{i \in [m]}: f_i(z) \leq 0\}
    \end{equation*}
\end{definition}

\begin{definition}[Closed function]
    A function $f$ is closed if all its sublevel sets are closed.
\end{definition}

\begin{definition}[Self-concordant function]
    A three times differentiable function $f: \R \to \R$ is self-concordant if $\abs{f'''} \leq 2(f'')^{3/2}$.
\end{definition}

\begin{definition}[Self-concordant function]
    A three times differentiable function $f: \R^n \to \R$ is self-concordant if its restriction to any line is self-concordant.
\end{definition}

Given these assumptions and the definitions
\begin{equation}
\begin{split}
    \MaxGradNorm &:= \max_{i \in [m]}\norm{\nabla f_i(0)}_2, \\
    \MaxPointNorm &:= \max\{\norm{z}_2 \given h(z) = 0 \a \forall{i \in [m]}: f_i(z) \leq 0\},
\end{split}
\end{equation}
the total number of Newton's method with backtracking line search steps performed before the barrier method finds a feasible $z$ for which
\begin{equation*}
    f_0(z) \leq \OPT[(f)]+\varepsilon
\end{equation*}
is upper bounded by
\begin{equation}\label{E:BarrierNewtonBound}
\begin{split}
    &\ceil*{\sqrt{m+2}\log_2{\frac{(m+1)(m+2)\MaxGradNorm\MaxPointNorm}{\abs{\OPT[(g)]}}}}(\frac{1}{2\zeta}+\xi) \\
    &+ \ceil*{\sqrt{m+1}\log_2{\frac{(m+1)(M-\OPT[(f)])}{\varepsilon}}}(\frac{1}{2\zeta}+\xi) \\
    &= \BigOh(\sqrt{m}\log{\frac{m\MaxGradNorm\MaxPointNorm(M-\OPT[(f)])}{\abs{\OPT[(g)]}\varepsilon}}).
\end{split}
\end{equation}
Here, $\zeta$ depends on the parameters used by Newton's method and $\xi$ depends on the accuracy to which we require that each central path program is solved. We treat both of these as fixed and constant, as they are not of much interest to us. As previously mentioned, $m$ is the number of inequality constraints of $(f)$, while $M$ is some upper bound of $\OPT[(f)]$ chosen by us.

Note how \eqref{E:BarrierNewtonBound} is thus infinite unless
\begin{equation*}
    \{z \given h(z) = 0 \a \forall{i \in [m]}: f_i(z) \leq 0\}
\end{equation*}
is bounded. Moreover, note how the upper bound here is in terms of the number of steps of Newton's method with backtracking line search, not in terms of arithmetic operations.

Additionally, for completeness, please note that in the derivation in \cite{boyd2004convex} of \eqref{E:BarrierNewtonBound}, every central path program is only solved approximately, yet it is assumed that every intermediary central path program gets the exact solution to the previous central path program as its starting point, rather than the approximate solution actually found. However, as Newton's method with backtracking line search gets close to the optimum, it enters a region of quadratic convergence. Hence, the upper bound on the number of steps Newton's method needs to beat $\varepsilon'$ only depends on $\varepsilon'$ through the term $\log\log\varepsilon'$. We can thus essentially treat this $\varepsilon'$ as not factoring into the complexity and solve these central path programs to whatever accuracy it takes to make their approximate solutions practically indistinguishable from the corresponding optimal solutions.

With those caveats in mind, we summarize our discussion in the form of \cref{T:BarrierNewtonBound}.

\begin{theorem}\label{T:BarrierNewtonBound}
    Assume $f_0,f_1,\ldots,f_m,h$ correspond to a strictly feasible \cref{M:General} instance $(f)$, with a corresponding \cref{M:PhaseI} instance $(g)$, such that
    \begin{itemize}
        \item All sublevel set of $(f)$ and $(g)$ are bounded,
        \item All central path programs of $(f)$ and $(g)$ have self-concordant and closed objective functions,
    \end{itemize}
    \begin{align*}
        \MaxGradNorm &= \max_{i \in [m]}\norm{\nabla f_i(0)}_2, \\
        \MaxPointNorm &= \max\{\norm{z}_2 \given h(z) = 0 \a \forall{i \in [m]}: f_i(z) \leq 0\}, \\
        M &\geq \OPT[(f)].
    \end{align*}
    Then, the barrier method obtains an $(f)$-feasible $z$ with $f_0(z)-\OPT[(f)] \leq \varepsilon$ in
    \begin{equation}
        \BigOh(\sqrt{m}\log{\frac{m\MaxGradNorm\MaxPointNorm(M-\OPT[(f)])}{\abs{\OPT[(g)]}\varepsilon}})
    \end{equation}
    steps of Newton's method with backtracking line search.
\end{theorem}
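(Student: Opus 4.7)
The plan is to assemble the bound by analyzing Phase~I and Phase~II separately, each via the standard self-concordance machinery from \cite{boyd2004convex}, and then adding the resulting Newton-step counts.

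First, I would handle Phase~II. Since $(f)$ is strictly feasible with bounded sublevel sets and the central-path objectives $tf_0 - \sum_i \log(-f_i)$ are self-concordant and closed, Boyd's analysis of the barrier method applies: starting from the central-path point $z_{(t^{(0)})}$ and updating $t \leftarrow \mu t$ with $\mu = 1 + 1/\sqrt{m+1}$, each outer iteration takes a constant number (depending only on the backtracking parameters, captured by $1/(2\zeta)+\xi$) of damped Newton steps to re-center, and after $l = \lceil \log(m/(\varepsilon t^{(0)}))/\log \mu \rceil$ outer iterations the duality gap $m/t$ drops below $\varepsilon$. Using $t^{(0)} \geq 1/(M - \OPT[(f)])$, which is guaranteed by how Phase~I terminates, this yields the second summand $\lceil \sqrt{m+1}\, \log_2((m+1)(M-\OPT[(f)])/\varepsilon)\rceil (1/(2\zeta)+\xi)$ of \eqref{E:BarrierNewtonBound}.

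Next, for Phase~I I would apply the same outer-iteration/Newton-step analysis to the auxiliary instance $(g)$ from \cref{M:PhaseI}. The choices of $a$ and $s^{(0)}$ are exactly such that $(0, s^{(0)})$ is on the central path of $(g)$ at some starting parameter $\tilde t^{(0)}$, so Phase~I begins properly centered. The sublevel sets of $(g)$ are bounded because adding $a^T x \leq 1$ confines $z$ to $\{z : \|z\|_2 \leq R\}$ and the epigraph variable $s$ is bounded below by $-\max_i \|\nabla f_i(0)\|_2 \cdot R = -\Gamma R$; this is where $\Gamma$ and $R$ enter the bound. Phase~I halts when it produces $(z,s)$ with $s \leq 0$, which by self-concordance of the central-path iterates occurs once $\OPT[(g)]$ is approximated to within $|\OPT[(g)]|$, giving the first summand $\lceil \sqrt{m+2}\,\log_2((m+1)(m+2)\Gamma R/|\OPT[(g)]|) \rceil (1/(2\zeta)+\xi)$.

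Finally, I would sum the two Newton-step counts, absorb the constants $\zeta,\xi$ (fixed backtracking parameters) and the $\log m$ versus $\sqrt{m+2}$ vs $\sqrt{m+1}$ distinctions into the $\BigOh$, and combine the two logarithms into a single one of the product $m \Gamma R (M-\OPT[(f)])/(|\OPT[(g)]|\varepsilon)$. The step I expect to be the main obstacle in a fully rigorous write-up is justifying the upper bound on the Phase~II starting parameter $t^{(0)}$ from the termination condition of Phase~I, since this is the link that lets $M - \OPT[(f)]$ (rather than some much larger quantity depending on $\Gamma,R$) appear in the Phase~II log; the remaining pieces are direct applications of the quoted theorems, and as noted the inner Newton accuracy $\varepsilon'$ enters only through a doubly-logarithmic term and can be treated as constant.
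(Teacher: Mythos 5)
Your proposal follows essentially the same route as the paper: the paper does not give an independent proof but summarizes the two-phase barrier-method analysis of \cite{boyd2004convex}, quoting the Phase~I and Phase~II Newton-step bounds of \eqref{E:BarrierNewtonBound} (whose two summands you reproduce exactly, including the link between the Phase~I termination, the constraint $f_0(z)\leq M$, and the appearance of $M-\OPT[(f)]$ in the Phase~II logarithm) and then absorbing constants into the $\BigOh$. The details you flag as potential obstacles are precisely those the paper delegates to the cited reference, so your write-up is consistent with the paper's proof.
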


Finally worth noting, if we run the barrier method on a \cref{M:General} instance that is infeasible but otherwise satisfies the conditions of \cref{T:BarrierNewtonBound}, then the same complexity bound holds for obtaining a certificate of infeasibility.

\subsection{Applied to a generalized flow program}\label{SS:BarrierOnGeneralizedFlow}

\cref{M:General} is a generalization of \cref{M:CP}, so we will here combine \cref{T:BarrierNewtonBound} with the unique properties of \cref{M:CP} to arrive at a stronger corollary. Hence, assume $(f)$ is a \cref{M:CP} instance and $(g)$ is its corresponding \cref{M:PhaseI} instance.

Pick an arbitrary $a=(u,v) \in A$. Then $x_a$ is involved in the constraints
\begin{align*}
    y_a - \outfuns_a(x_a) &\leq 0, \\
    x_a - \caps_a &\leq 0, \\
    \vout{u} - \vin{u} &\leq 0, \\
\end{align*}
while $y_a$ is involved in the constraints
\begin{align*}
    y_a - \outfuns_a(x_a) &\leq 0 \\
    -y_a &\leq 0,
\end{align*}
and either
\begin{equation*}
     \vout{v} - \vin{v} \leq 0
\end{equation*}
or
\begin{equation*}
     \demands_v - \vin{v} \leq 0.
\end{equation*}
As $0 \leq \outfuns_a \leq \Id$, we note that $\outfuns_a(0)=0$, hence that
\begin{equation*}
    0 \leq \at{\frac{\partial \outfuns_a(x)}{\partial x}}{x=0} \leq 1.
\end{equation*}
Thus,
\begin{equation}
    \begin{cases}
    \at{\frac{\partial f_i(z)}{\partial x_a}}{z=0} \in
    \begin{cases}
        [-1,0]\cup\{1\}, &\text{ if $f_i$ involves $x_a$}, \\
        \{0\}, &\text{ otherwise},
    \end{cases}\\
    \at{\frac{\partial f_i(z)}{\partial y_a}}{z=0} \in
    \begin{cases}
        \{-1,1\}, &\text{ if $f_i$ involves $y_a$}, \\
        \{0\}, &\text{ otherwise}.
    \end{cases}
\end{cases}
\end{equation}
We can hence conclude that
\begin{equation}
    \MaxGradNorm \leq \norm{(\underbrace{1,1,\ldots,1}_{2\abs{A}})}_2 = \sqrt{2\abs{A}}.
\end{equation}

From the properties and structure of \cref{M:CP} we moreover know that
\begin{equation*}
    \OPT[(f)] > 0
\end{equation*}
and
\begin{equation*}
    m = \BigOh(\abs{A}).
\end{equation*}
Also, as
\begin{equation}\label{E:BoundedFeasibleRegion}
    0 \leq y_a \leq x_a \leq \caps_a < \infty, \ \forall{a \in A},
\end{equation}
we observe that
\begin{equation*}
    \norm{(x,y)}_2 \leq \norm{(\caps,\caps)}_2 \leq \sqrt{2}\norm{\caps}_2 \leq \sqrt{2\abs{A}}\caps_{max},
\end{equation*}
hence conclude that
\begin{equation}
    \MaxPointNorm \leq \sqrt{2\abs{A}}\caps_{max}.
\end{equation}
Finally, as we assumed $(f)$ is feasible, the demands must be satisfiable by fully utilizing every arc, so
\begin{equation}
    M \leq \sum_{a \in A}\costs_a(\caps_a) \leq \abs{A}\max_{a \in A}{\costs_a(\caps_a)} =: \abs{A}\costs_{max}.
\end{equation}
As $\MaxPointNorm \leq \infty$, we can by the definitions of sublevel sets and $\MaxPointNorm$ hence conclude that every sublevel set of $(f)$ is bounded. It turns out that all sublevel sets of $(g)$ are also bounded, but deducing this requires a more careful analysis, such as that shown in \cref{L:BoundedPhaseISublevelSets}.

\begin{lemma}\label{L:BoundedPhaseISublevelSets}
    If $(f)$ is a \cref{M:CP} instance and $(g)$ is its corresponding \cref{M:PhaseI} instance, then all sublevel sets of $(g)$ are bounded.
\end{lemma}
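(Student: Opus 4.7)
The plan is to fix an arbitrary $\alpha \in \R$ and show that the $\alpha$-sublevel set of $(g)$ sits inside a bounded box of $\R^{2\abs{A}+1}$, by deriving separate upper and lower bounds on each coordinate of a generic point $((x,y),s)$ in the set. Every such point satisfies $s \leq \alpha$ together with the Phase I relaxations $x_a - \caps_a \leq s$, $y_a - \outfuns_a(x_a) \leq s$, $-y_a \leq s$, $\vout{v} - \vin{v} \leq s$, and $\demands_\sink - \vin{\sink} \leq s$ of the inequality constraints of $(f)$, plus the extra Phase I constraints $f_0(z) \leq M$ and $a^Tz \leq 1$ (which I will not need to invoke).

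First I would extract the bounds on $x$ and $y$. From $x_a - \caps_a \leq s \leq \alpha$ I immediately get $x_a \leq \caps_a + \alpha$. Chaining $y_a - \outfuns_a(x_a) \leq s \leq \alpha$ with the hypothesis $\outfuns_a \leq \Id$ on $\R_{\geq 0}$ then gives $y_a \leq \outfuns_a(x_a) + \alpha \leq x_a + \alpha \leq \caps_a + 2\alpha$, where I may assume $\alpha \geq 0$ without loss of generality, since decreasing $\alpha$ only shrinks the sublevel set. Conversely, $-y_a \leq s \leq \alpha$ yields $y_a \geq -\alpha$, while the domain $\R_{\geq 0}$ of $\outfuns_a$ forces $x_a \geq 0$ implicitly, as any $x_a < 0$ would make the expression $y_a - \outfuns_a(x_a)$ ill-defined.

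Finally, I would bound $s$ from below by invoking a sink constraint. Pick any $\sink \in \sinks$ (the degenerate case $\sinks = \emptyset$ is trivial, as $(x,y)=(0,0)$ is then optimal for $(f)$ and a similar bound follows from combining $x_a - \caps_a \leq s$ with $x_a \geq 0$). The relaxation $\demands_\sink - \vin{\sink} \leq s$ together with the upper bound on each $y_a$ from the previous step yields
\begin{equation*}
    s \geq \demands_\sink - \vin{\sink} \geq \demands_\sink - \sum_{a \in \InArcs{\sink}}(\caps_a + 2\alpha),
\end{equation*}
a finite constant depending only on $\alpha$ and the data of $(f)$. All coordinates of $((x,y),s)$ then lie in bounded intervals, which closes the proof. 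I expect no real obstacle beyond careful bookkeeping of which relaxed constraint to chain next for each coordinate; the only conceptual subtlety worth flagging is that the domain of $\outfuns_a$ is what implicitly enforces $x_a \geq 0$, and this in turn is essential for both the upper bound on $y_a$ and the lower bound on $s$.
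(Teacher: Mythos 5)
Your overall strategy is the same as the paper's: intersect with $\{s\le\alpha\}$ and chain the relaxed constraints to trap every coordinate of $((x,y),s)$ in a finite box. Your route to the lower bound on $s$ differs from the paper's (the paper shows $s\ge-\caps_{max}$ for \emph{every} feasible point of \cref{M:PhaseI} by a contradiction argument using only the arc constraints $-y_a\le s$, $x_a\le\caps_a+s$, $y_a\le\outfuns_a(x_a)+s$, whereas you use a sink constraint $\demands_\sink-\vin{\sink}\le s$ together with your upper bounds on the $y_a$), and that part of your argument is fine --- your bound is $\alpha$-dependent but finite, which is all boundedness needs, and it does not secretly rely on $x_a\ge0$ since $\outfuns_a(x_a)\le 0$ whenever $x_a\le 0$ anyway.

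The genuine weak point is the claim that the domain $\R_{\ge0}$ of $\outfuns_a$ ``implicitly'' forces $x_a\ge0$ on the feasible set of $(g)$. In the paper's setup the Phase I instance is built from constraint functions as in \cref{M:General}, i.e.\ functions on all of $\R^n$, and \emph{every} inequality of \cref{M:CP} is relaxed by the slack $s$; in the case this lemma actually feeds into, \cref{M:QCQP}, the constraint $r_ax_a^2-\outscaler_ax_a+y_a\le s$ is a polynomial inequality that is perfectly satisfiable with $x_a<0$ (take $r_a=0$, $\outscaler_a=1$, $y_a=0$, $x_a=-\alpha/2$, $s=\alpha$), so there is no implicit domain constraint doing this work, and indeed the paper's own bounding box allows $x_a$ down to $-2\alpha$. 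Without $x_a\ge0$ you have no lower bound on the $x$-coordinates, and your $\sinks=\emptyset$ fallback for bounding $s$ also collapses. The gap is small and repairable with inequalities you already use: from $-s\le y_a\le\outfuns_a(x_a)+s$ you get $\outfuns_a(x_a)\ge-2s\ge-2\alpha$, hence $x_a\ge\outfuns_a(x_a)\ge-2\alpha$ (using $\outfuns_a\le\Id$, as the paper does, or just strict monotonicity of the concave $\outfuns_a$), which restores a bounded box and also rescues the $\sinks=\emptyset$ case via $s\ge x_a-\caps_a$. So: derive the lower bound on $x_a$ from the relaxed constraints rather than from a domain restriction the perturbed program does not impose.
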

\begin{proof}
    Assume for contradiction that a $(g)$-feasible $(z,s)$ with $s < -\caps_{max}$ exists. Pick $a \in A$ arbitrarily. By the definitions of \cref{M:CP,M:PhaseI}, we know that
    \begin{equation*}
        y_a \geq -s > \caps_{max} \geq 0.
    \end{equation*}
    However, we by these definitions also know that
    \begin{equation*}
        x_a \leq \caps_{a} + s < \caps_{a} - \caps_{max} \leq 0
    \end{equation*}
    and
    \begin{equation*}
        y_a \leq \outfuns_a(x_a) + s < x_a - \caps_{max} \leq 0,
    \end{equation*}
    hence that
    \begin{equation*}
        y_a < -\caps_{max} \leq 0.
    \end{equation*}
    We have arrived at the contradiction $0 < y_a < 0$, and thus conclude that any $(g)$-feasible $(z,s)$ must have $s \geq -\caps_{max}$.

    For arbitrary $s$, we find that
    \begin{equation*}
        -s \leq y_a \leq \outfuns_a(x_a) + s \leq x_a + s \leq \caps_a + 2s, \ \forall{a \in A},
    \end{equation*}
    hence that necessary (but not sufficient) conditions for $(z,s)$ to be $(g)$-feasible is that
    \begin{align*}
        x_a &\in [-2s, \caps_a + s], \ \forall{a \in A} \\
        y_a &\in [-s, \caps_a + 2s], \ \forall{a \in A}.
    \end{align*}

    Combining our results, we find that the $\alpha$-sublevel set of $(g)$ is a subset of
    \begin{equation*}
        [-2\alpha, \caps_{max} + \alpha]^{A} \times [-\alpha, \caps_{max} + 2\alpha]^{A} \times [-\caps_{max}, \alpha],
    \end{equation*}
    which is clearly bounded. We hence conclude that all sublevel sets of $(g)$ are bounded.
\end{proof}

We combine these findings with \cref{T:BarrierNewtonBound} to arrive at the stronger \cref{C:BarrierNewtonBound}.

\begin{corollary}\label{C:BarrierNewtonBound}
    Assume $(f)$ is a strictly feasible \cref{M:CP} instance, $(g)$ is its corresponding \cref{M:PhaseI} instance, and all central path programs of $(f)$ and $(g)$ have self-concordant and closed objective functions.
    
    Then, the barrier method obtains an $(f)$-feasible $z$ with
    \begin{equation*}
        \val[(f)]{z} - \OPT[(f)] \leq \varepsilon
    \end{equation*}
    in
    \begin{equation}
        \BigOh(\sqrt{\abs{A}}\log{\frac{\abs{A}\costs_{max}\caps_{max}}{\abs{\OPT[(g)]}\varepsilon}})
    \end{equation}
    steps of Newton's method with backtracking line search.
\end{corollary}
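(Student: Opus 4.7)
The plan is to simply invoke Theorem~\ref{T:BarrierNewtonBound} with the $(f)$ and $(g)$ of the corollary, after verifying its hypotheses and substituting the explicit bounds on $\MaxGradNorm$, $\MaxPointNorm$, $M$, and $m$ that were collected in the paragraphs immediately preceding the corollary.

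To apply Theorem~\ref{T:BarrierNewtonBound}, I need: (a) strict feasibility of $(f)$, (b) self-concordance and closedness of the central path objectives for both $(f)$ and $(g)$, (c) bounded sublevel sets of $(f)$ and $(g)$, and (d) explicit values of $\MaxGradNorm$, $\MaxPointNorm$, and $M \geq \OPT[(f)]$. Items (a) and (b) are assumed in the corollary statement. For (c), sublevel sets of $(f)$ are contained in $\prod_{a \in A}[0,\caps_a]^2$ by \eqref{E:BoundedFeasibleRegion}, hence bounded, and sublevel sets of $(g)$ are bounded by Lemma~\ref{L:BoundedPhaseISublevelSets}. For (d), the preceding discussion established $\MaxGradNorm \leq \sqrt{2\abs{A}}$, $\MaxPointNorm \leq \sqrt{2\abs{A}}\,\caps_{max}$, $m = \BigOh(\abs{A})$, and the valid choice $M := \abs{A}\costs_{max} \geq \OPT[(f)]$ (and in particular $M - \OPT[(f)] \leq \abs{A}\costs_{max}$).

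Plugging these into the bound of Theorem~\ref{T:BarrierNewtonBound} gives
\begin{equation*}
    \BigOh\!\left(\sqrt{\abs{A}}\,\log\frac{\abs{A}\cdot\sqrt{2\abs{A}}\cdot\sqrt{2\abs{A}}\,\caps_{max}\cdot\abs{A}\costs_{max}}{\abs{\OPT[(g)]}\,\varepsilon}\right) = \BigOh\!\left(\sqrt{\abs{A}}\,\log\frac{\abs{A}^3\costs_{max}\caps_{max}}{\abs{\OPT[(g)]}\,\varepsilon}\right),
\end{equation*}
and since $\log(\abs{A}^3 X) = 3\log\abs{A} + \log X = \BigOh(\log(\abs{A} X))$, the polynomial factors of $\abs{A}$ inside the logarithm can be absorbed into the outer $\BigOh$, yielding the claimed bound $\BigOh(\sqrt{\abs{A}}\,\log(\abs{A}\costs_{max}\caps_{max}/(\abs{\OPT[(g)]}\varepsilon)))$. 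There is no real obstacle here: all quantitative work has already been done in the preceding text, and the only step that required non-trivial argument, the boundedness of sublevel sets of $(g)$, is isolated in Lemma~\ref{L:BoundedPhaseISublevelSets}.
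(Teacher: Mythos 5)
Your proposal is correct and is essentially the paper's own argument: the corollary is obtained by plugging the bounds $\MaxGradNorm \leq \sqrt{2\abs{A}}$, $\MaxPointNorm \leq \sqrt{2\abs{A}}\caps_{max}$, $m = \BigOh(\abs{A})$, $M \leq \abs{A}\costs_{max}$, and the sublevel-set boundedness (trivial for $(f)$, Lemma~\ref{L:BoundedPhaseISublevelSets} for $(g)$) into Theorem~\ref{T:BarrierNewtonBound}, with the polynomial-in-$\abs{A}$ factors inside the logarithm absorbed into the $\BigOh$ exactly as you do.
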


\subsubsection{For our convex \acs{QCQP}}\label{SS:QuadraticLosses}

Assume now the even more special case of $(f)$ being an instance of \cref{M:QCQP}, with $(g)$ being its corresponding \cref{M:PhaseI} instance. Then, every central path programs of $(f)$ and $(g)$ has a self-concordant objective function \citep{boyd2004convex}. \cite{boyd2004convex} also tells us that if a function is continuous, finite on an open domain, and tends to infinity as one approaches the boundary of that domain, then it is also closed. As this is the case for the objective functions of all central path programs of $(f)$ and $(g)$, we conclude that all of these objective functions are closed. We can hence combine these findings with \cref{C:BarrierNewtonBound} to arrive at the even stronger \cref{C:QCQPBarrierNewtonBound}.

\begin{corollary}\label{C:QCQPBarrierNewtonBound}
    Assume $(f)$ is a strictly feasible \cref{M:QCQP} instance and $(g)$ is its corresponding \cref{M:PhaseI} instance.
    
    Then, the barrier method obtains an $(f)$-feasible $z$ with
    \begin{equation*}
        \val[(f)]{z} - \OPT[(f)] \leq \varepsilon
    \end{equation*}
    in
    \begin{equation}
        \BigOh(\sqrt{\abs{A}}\log{\frac{\abs{A}\costs_{max}\caps_{max}}{\abs{\OPT[(g)]}\varepsilon}})
    \end{equation}
    steps of Newton's method with backtracking line search.
\end{corollary}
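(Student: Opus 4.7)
The plan is to derive this corollary directly from \cref{C:BarrierNewtonBound} by verifying the only two hypotheses of \cref{C:BarrierNewtonBound} that are not automatically inherited from the definition of \cref{M:QCQP}: namely, that every central path program of $(f)$ and $(g)$ has a self-concordant objective and that this objective is closed. Strict feasibility of $(f)$ (and hence of $(g)$) is assumed, and all structural bounds on $m$, $\MaxGradNorm$, $\MaxPointNorm$, and $M$ that the proof of \cref{C:BarrierNewtonBound} relies on are already established in the discussion preceding that corollary, so I would not revisit them.

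For self-concordance, I would appeal to the composition rules catalogued in \cite{boyd2004convex}. The objective of a central path program of $(f)$ has the form $t f_0(z) - \sum_{i \in [m]} \log(-f_i(z))$, and similarly for $(g)$. In our QCQP setting $f_0$ is a convex quadratic in $z=(x,y)$, each constraint function $f_i$ is either affine or the convex quadratic $r_a x_a^2 - \outscaler_a x_a + y_a$, and the Phase I perturbation preserves this structure. Convex quadratics are self-concordant, and $-\log(-\cdot)$ composed with a convex quadratic is self-concordant; sums and nonnegative scalings of self-concordant functions are self-concordant. This gives self-concordance of every central path objective of both $(f)$ and $(g)$.

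For closedness, I would invoke the continuity criterion already cited in the subsection: a function that is continuous and finite on an open set and blows up at the boundary of that set is closed. The domain of each central path objective is exactly the strict interior of the feasible region (arising from the $-\log(-f_i)$ terms), the objective is smooth there, and as $z$ approaches any point where some $f_i(z) = 0$ the corresponding $-\log(-f_i(z))$ tends to $+\infty$. Hence every central path objective of $(f)$ and $(g)$ is closed.

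With both hypotheses verified, \cref{C:BarrierNewtonBound} applies verbatim to the QCQP instance $(f)$, yielding exactly the stated iteration bound
\[
\BigOh\!\left(\sqrt{\abs{A}}\log{\frac{\abs{A}\costs_{max}\caps_{max}}{\abs{\OPT[(g)]}\varepsilon}}\right).
\]
I do not foresee a genuine obstacle: the only step that requires any care is making sure the self-concordance argument is stated correctly for the composition $-\log(-f_i)$ when $f_i$ is quadratic (as opposed to merely affine), but this is a standard item in \cite{boyd2004convex} and can be cited directly rather than reproved.
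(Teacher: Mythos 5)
Your proposal is correct and follows essentially the same route as the paper: the paper likewise establishes the corollary by citing \cite{boyd2004convex} for self-concordance of the central path objectives of $(f)$ and $(g)$, applying the same continuity/blow-up criterion for closedness, and then invoking \cref{C:BarrierNewtonBound}. Your extra detail on the composition rules for $-\log(-f_i)$ with quadratic $f_i$ is just a more explicit version of the citation the paper uses.
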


    \section{An FPTAS for Problem \ref{P:ConstantOP}}\label{S:Nesterov} As already noted, \cref{T:BarrierNewtonBound,C:BarrierNewtonBound,C:QCQPBarrierNewtonBound} provide bounds on the number of steps of Newton's method with backtracking line search required by the barrier method, but not on the actual runtime complexity in terms of algorithmic operations. \cite{nesterov1994interior} provides and analyses a related algorithm, which we will call the \textit{path-following method}. This method finds almost-feasible almost-optimal points of a convex \ac{QCQP} instance in polynomial time, even if the instance is only feasible but not strictly feasible. In this section, we will first outline that method, and then combine it with the unique properties of \cref{M:QCQP}, to under a mild assumption deduce an \ac{FPTAS} for \cref{M:QCQP}, hence for \cref{P:ConstantOP}.

\subsection{For the general convex \acs{QCQP}}

The following subsection is entirely based on \cite{nesterov1994interior}.

\begin{program}[Bounded and convex \acs{QCQP}]\label{M:ArbitraryQCQP}
Given: $f_0,f_1,\ldots,f_m$ and $\sigma \geq 0$, such that with
\begin{equation*}
    B := \{z\in\R^n : \norm{z}_2 \leq \sigma\},
\end{equation*}
we find
\begin{align*}
    &\forall{i \in \{0,1,\ldots,m\}}: \left\{
    \begin{array}{@{}r@{\;}l}
        A_i &\in \R^{n \times n} \text{ symmetric and positive-semidefinite}, \\
        b_i &\in \R^n, \\
        f_i&: B \ni z \mapsto \frac{1}{2} z^T A_i z - b_i^T z + c_i \in \R,
    \end{array} \right.
\end{align*}
and
\begin{equation*}
    \{z \in B : \forall{i \in [m]}: f_i(z) \leq 0\} \neq \emptyset.
\end{equation*}

\begin{mini*}{z}{f_0(z)}{}{}
    \addConstraint{f_i(z)}{\leq 0, \quad}{i \in [m]}.
\end{mini*}
\end{program}

Assume we are given a feasible \cref{M:ArbitraryQCQP} instance $(f)$, hence some feasible, bounded, convex \ac{QCQP} instance. The path-following method will then give us an \textit{$\varepsilon$-solution} to $(f)$.

\begin{definition}[$\varepsilon$-solution]
    If $(f)$ is a \cref{M:ArbitraryQCQP} instance with functions $f_0,\ldots,f_m$, then an $\varepsilon$-solution to $(f)$ is a $z$ for which
    \begin{align*}
        f_i(z) &\leq \varepsilon, \ \forall{i \in [m]}, \\
        f_0(z)-\OPT &\leq \varepsilon.
    \end{align*}
\end{definition}

One way to obtain such an $\varepsilon$-solution to $(f)$ is by approximately minimizing the auxiliary function $\AuxFunI$:
\begin{equation}
\begin{split}\label{E:AuxFunI}
    &\FunctionsBound := \max_{i \in \{0,1,\ldots,m\}}\sup_{z \in B}f_i(z),\\
    &\AuxConst := \frac{\varepsilon}{3\FunctionsBound},\\
    &\AuxFunI: B \ni z \mapsto \max\{\AuxConst(f_0(z)+\FunctionsBound),f_1(z),f_2(z),\ldots,f_m(z)\}.
\end{split}
\end{equation}
To be exact, if
\begin{equation*}
    \AuxFunI(z) - \min_{z' \in B}\AuxFunI(z') \leq \frac{\varepsilon^2}{3\FunctionsBound},
\end{equation*}
then $z$ is an $\varepsilon$-solution to $(f)$. Hence, if we have a method for minimizing $\AuxFunI$, then we have a method for finding an $\varepsilon$-solution.

The path-following method however takes it one step further, rather minimizing the extended space formulation $\AuxFunII$ of $\AuxFunI$:
\begin{equation}
\begin{split}
    &\AuxSpace := \{(z,q) \in B \times \R : \AuxFunI(z) \leq q \leq 2\FunctionsBound\}, \\
    &\AuxFunII: \AuxSpace \ni (z,q) \mapsto q.
\end{split}
\end{equation}
Specifically, the path-following method finds an $\varepsilon$-solution to $(f)$ by obtaining a $(z,s) \in \AuxSpace$ that gets within $\frac{\varepsilon^2}{3\FunctionsBound}$ absolute error of minimizing $\AuxFunII$, and then extracting $z$ from this. To find such a $(z,s)$, the path-following method employs a modified barrier method.

This modified barrier method is similar to the barrier method of \cref{S:Boyd} in that it makes use of barrier functions and consists of a Phase II iteratively approximating the optimal solution and a Phase I finding an appropriate starting point for Phase II. Although it like the barrier method performs steps of Newton's method against a sequence of central path programs, it differs in the choice of central path programs, in that it for every central path program only performs a single step of Newton's method before moving to the next program, and in that this step is a pure step of Newton's method without any line search, which thus runs in $\BigOh(1)(mn^2+n^3)$ time.

The main trick behind making this work is to ensure we for every central path program start in the region of quadratic convergence of Newton's method, as every step of Newton's method will then be highly efficient. Let $t^{(i)}$ denote the $i$-th value of $t$, $(f^{(i)})$ denote the corresponding $i$-th central path program, and $(z,q)^{(i)}$ denote our output for $(f^{(i)})$, hence the point obtained after a total of $i$ steps of Newton's method. What we want is then that for every $i \in \N$, $(z,q)^{(i)}$ is close enough to the optimizer of $(f^{(i+1)})$ to be in the region of quadratic convergence of Newton's method for $(f^{(i+1)})$. This is achieved by having each phase start with a $((z,q)^{(0)},t^{(0)})$ such that $(z,q)^{(0)}$ is deep inside the region of quadratic convergence of $(f^{(0)})$, and then in each step changing $t$ by a small enough value that if $(z,q)^{(i)}$ is deep inside the quadratic convergence region of $(f^{(i)})$, then $(z,q)^{(i)}$ is also inside the quadratic convergence region of $(f^{(i+1)})$. A single step of Newton's method then brings $(z,q)^{(i+1)}$ deep inside the quadratic convergence region of $(f^{(i+1)})$, ensuring the pattern repeats, hence by induction that $(z,q)^{(i)}$ is always in the quadratic convergence region of $(f^{(i+1)})$.

For reference, with
\begin{equation}\label{E:AuxFunIII}
\begin{split}
    \AuxFunIII &: \AuxSpace \ni (z,q) \mapsto\\
    &-\log\left(\frac{q}{\AuxConst}-\FunctionsBound-f_0(z)\right)\\
    &- \sum_{i\in[m]}\log(q-f_i(z))\\
    &- \log(\sigma^2-\norm{z}_2^2)\\
    &- \log(2\FunctionsBound-q) \in \R,
\end{split}
\end{equation}
the central path programs of Phase I and II are captured by \cref{M:PathFollowingPhase1CentralPath,M:PathFollowingPhase2CentralPath} respectively.

\begin{program}\label{M:PathFollowingPhase1CentralPath}
Given: $\FunctionsBound$ as in \eqref{E:AuxFunI}, $\AuxFunIII$ as in \eqref{E:AuxFunIII}, and $t > 0$.
\begin{mini*}{z,q}{t \cdot (\at{-\nabla \AuxFunIII}{(0,\frac{3}{2}\FunctionsBound)})^T (z,q-\frac{3}{2}\FunctionsBound) + \AuxFunIII(z,q)}{}{}
\end{mini*}
\end{program}

\begin{program}\label{M:PathFollowingPhase2CentralPath}
Given: $\AuxFunIII$ as in \eqref{E:AuxFunIII} and $t > 0$.
\begin{mini*}{z,q}{tq + \AuxFunIII(z,q)}{}{}
\end{mini*}
\end{program}

Without going into detail, $(0,\frac{3}{2}\FunctionsBound)$ is in $\interior{\AuxSpace}$ and is the solution to \cref{M:PathFollowingPhase1CentralPath} for $t=1$, while $\kappa_1,\kappa_2>1$ are chosen such that our argument above about staying in the quadratically convergent region holds. The path-following method thus starts in Phase I with $(z,q)^{(0)}:=(0,\frac{3}{2}\FunctionsBound)$ and $t^{(0)}:=1$, defines $t^{(1)}:=\kappa_1^{-1}$, defines $(z,q)^{(1)}$ as the result of performing one step of Newton's method from $(z,q)^{(0)}$ for \cref{M:PathFollowingPhase1CentralPath} with $t=t^{(1)}$, defines $t^{(2)}:=\kappa_1^{-2}$, defines $(z,q)^{(2)}$ as the result of performing one step of Newton's method from $(z,q)^{(1)}$ for \cref{M:PathFollowingPhase1CentralPath} with $t=t^{(2)}$, and so on. This continues until $(z,q)^{(i)},t^{(i)}$ fulfill some conditions that allow us to find a $\tilde{t}^{(1)}$ such that $(z,q)^{(i)}$ is in the region of quadratic convergence of Newton's method on the \cref{M:PathFollowingPhase2CentralPath} instance corresponding to $(f)$ and $\tilde{t}^{(1)}$. The path-following method then enters Phase II, which is executed like Phase I, but with \cref{M:PathFollowingPhase2CentralPath} instances and $\tilde{t}^{(i)}=\tilde{t}^{(1)}\kappa_2^{i-1}$.

In a total of at most
\begin{equation}
    \BigOh(1)\sqrt{m}\log{\frac{2m\FunctionsBound}{\varepsilon}}
\end{equation}
steps of Newton's method, this procedure obtains an $(z,q) \in \interior{\AuxSpace}$ such that $q \leq \frac{\varepsilon^2}{3\FunctionsBound}$, hence such that $z$ is an $\varepsilon$-solution to our \cref{M:ArbitraryQCQP} instance $(f)$. Multiplying this bound with the complexity bound of a single step of the pure Newton's method, we obtain a corresponding time complexity bound, as captured by \cref{T:PathFollowingComplexityBound}.

\begin{theorem}\label{T:PathFollowingComplexityBound}
    If $(f)$ is a \cref{M:ArbitraryQCQP} instance, then the path-following method obtains an $\varepsilon$-solution to $(f)$ in time complexity
    \begin{equation}
        \BigOh(1)\sqrt{m}(mn^2+n^3)\log{\frac{2m\FunctionsBound}{\varepsilon}}.
    \end{equation}
\end{theorem}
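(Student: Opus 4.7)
The statement is essentially a repackaging of the discussion immediately preceding it, so my plan is to simply put the two pieces together cleanly rather than re-derive Nesterov's analysis from scratch. The strategy decomposes into a per-iteration cost bound and a total-iteration bound, and the theorem follows by multiplication.

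First I would bound the cost of a single Newton step. Since each central path program in Phases~I and~II (\cref{M:PathFollowingPhase1CentralPath,M:PathFollowingPhase2CentralPath}) has the log-barrier $\AuxFunIII$ from \eqref{E:AuxFunIII} as its only non-affine ingredient, computing the gradient and Hessian at a point reduces to summing $\BigOh(m)$ rank-one-plus-quadratic contributions, each of which costs $\BigOh(n^2)$ to assemble (the $f_i$ are quadratic, so their Hessians are the fixed matrices $A_i$ plus scalar rescalings). This yields a Hessian in time $\BigOh(mn^2)$. Solving the resulting Newton system is a single dense linear solve in $n+1$ variables, costing $\BigOh(n^3)$. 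As the path-following method takes a pure (un-line-searched) Newton step, the per-step cost is therefore $\BigOh(mn^2+n^3)$.

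Next I would invoke the iteration count already argued in the preceding paragraphs: by the staying-in-the-region-of-quadratic-convergence argument based on the self-concordance of $\AuxFunIII$, the schedules $t^{(i)}=\kappa_1^{-i}$ in Phase~I and $\tilde{t}^{(i)}=\tilde{t}^{(1)}\kappa_2^{\,i-1}$ in Phase~II each advance by a constant factor per step, with $\kappa_1,\kappa_2>1$ independent of the instance. The termination criterion $q\le\varepsilon^2/(3\FunctionsBound)$ is reached after at most
\begin{equation*}
\BigOh(1)\sqrt{m}\log\frac{2m\FunctionsBound}{\varepsilon}
\end{equation*}
Newton steps in total, at which point the extracted $z$ is an $\varepsilon$-solution to $(f)$ by the relation between approximate minimizers of $\AuxFunII$ and $\varepsilon$-solutions of $(f)$ established via \eqref{E:AuxFunI}.

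Multiplying the per-iteration cost by the iteration count delivers the claimed bound
\begin{equation*}
\BigOh(1)\sqrt{m}\,(mn^2+n^3)\log\frac{2m\FunctionsBound}{\varepsilon}.
\end{equation*}
The hard part of this argument is entirely external: it is the self-concordance-based quadratic-convergence analysis that justifies both the constant $\BigOh(1)$ factor and the single-Newton-step-per-central-path-program scheme. Since we are explicitly citing \cite{nesterov1994interior} for that analysis, nothing substantive remains to be done inside this proof beyond assembling the two bounds, and I would write it as a short aggregation proof with a pointer to the above discussion.
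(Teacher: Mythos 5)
Your proposal is correct and follows essentially the same route as the paper: the paper likewise obtains \cref{T:PathFollowingComplexityBound} by multiplying the $\BigOh(1)\sqrt{m}\log\frac{2m\FunctionsBound}{\varepsilon}$ Newton-step bound from \cite{nesterov1994interior} by the $\BigOh(1)(mn^2+n^3)$ cost of a single pure Newton step, treating the quadratic-convergence analysis as external. Your brief justification of the per-step cost (Hessian assembly in $\BigOh(mn^2)$ plus an $\BigOh(n^3)$ solve) is a harmless elaboration of what the paper simply asserts.
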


\subsection{For our special case}

If $(f)$ is a \cref{M:QCQP} instance, then restricting $\costs$ and $\outfuns$ of $(f)$ to some ball $B$ gives us an instance of \cref{M:ArbitraryQCQP}, hence allowing us to utilize the path-following method and its corresponding analysis to optimize it.

We hence aim to make such a restriction while leaving $(f)$ unchanged inside its feasible region, as then any $\varepsilon$-solution to the modified instance will be an $\varepsilon$-solution to $(f)$. Since we know that $0 \leq y \leq x \leq \caps$, we know that
\begin{equation}
    \norm{(x,y)}_2 \leq \norm{(\caps,\caps)}_2 = \sqrt{2}\norm{\caps}_2.
\end{equation}
We can thus achieve this goal by picking
\begin{equation}
    \sigma := \sqrt{2}\norm{\caps}_2
\end{equation}
and restricting our functions $\costs$ and $\outfuns$ to
\begin{equation}
\begin{array}{r@{}l}
    B :&= \{z \in \R^A \times \R^A: \norm{z}_2 \leq \sigma\}\\
    &= \{z \in \R^A \times \R^A: \norm{z}_2 \leq \sqrt{2}\norm{\caps}_2\}.
\end{array}
\end{equation}

As we are hence dealing with a trivial reduction from \cref{M:QCQP} to \cref{M:ArbitraryQCQP}, we will from now treat \cref{M:QCQP} as a subclass of \cref{M:ArbitraryQCQP}, although this is technically not the case.

By analyzing the objective function and every set of inequalities of \cref{M:QCQP}, we find that we can pick
\begin{equation}\label{E:FuncsBoundBound}
\begin{array}{r@{}l}
    \FunctionsBound :&= \max\{\sqrt{2\abs{A}}\clin_{max}\sigma + \cquad_{max}\sigma^2, 2\sigma + r_{max}\sigma^2, \sigma + \caps_{max}, \sqrt{2\abs{A}}\sigma + \demands_{max}\}\\
    &= \BigOh\left(\clin_{max}\sqrt{\abs{A}}\norm{\caps}_2 + (\cquad_{max} + r_{max})\norm{\caps}_2^2\right).
\end{array}
\end{equation}
This complexity bound assumes the instance is feasible, as then
\begin{equation*}
    \demands_{max} \leq \norm{\caps}_1 \leq \sqrt{2\abs{A}}\norm{\caps}_2.
\end{equation*}

Finally, by weak graph connectivity, $\abs{V}=\BigOh(\abs{A})$. Combining this with \eqref{E:FuncsBoundBound} and \cref{T:PathFollowingComplexityBound}, we arrive at an explicit complexity bound for approximately solving \cref{M:QCQP} instances, captured by \cref{C:PathFollowingComplexityBound}.

\begin{corollary}\label{C:PathFollowingComplexityBound}
    If $(f)$ is a \cref{M:QCQP} instance, then the path-following method obtains an $\varepsilon$-solution to $(f)$ in time complexity
    \begin{equation}
        \BigOh\left(\abs{A}^{7/2}\log{\frac{\abs{A}(\clin_{max} + \cquad_{max} + r_{max})\norm{\caps}_2}{\varepsilon}}\right).
    \end{equation}
\end{corollary}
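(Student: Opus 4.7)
The plan is to instantiate \cref{T:PathFollowingComplexityBound} with the parameters of \cref{M:QCQP}, using the explicit bounds on $m$, $n$, and $\FunctionsBound$ that the section has already derived. First, I would invoke the remark preceding the statement: restricting the quadratic data of an \cref{M:QCQP} instance $(f)$ to the ball $B=\{z\in\R^{2\abs{A}}:\norm{z}_2\le \sqrt{2}\norm{\caps}_2\}$ yields a bona-fide \cref{M:ArbitraryQCQP} instance whose feasible region coincides with that of $(f)$, so any $\varepsilon$-solution of the restricted instance is an $\varepsilon$-solution of $(f)$. Thus \cref{T:PathFollowingComplexityBound} applies.

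Next I would read off the relevant constants. The decision variable is $(x,y)\in\R^{2\abs{A}}$, so $n=2\abs{A}$. Counting the inequality constraints of \cref{M:QCQP} (one per arc for the out-flow relation, arc capacity, and nonnegativity, plus one per non-source node for flow conservation and demand satisfaction) and using that weak connectivity forces $\abs{V}=\BigOh(\abs{A})$, we obtain $m=\BigOh(\abs{A})$. Hence
\begin{equation*}
    \sqrt{m}\,(mn^2+n^3) \;=\; \BigOh\bigl(\sqrt{\abs{A}}\cdot \abs{A}^3\bigr) \;=\; \BigOh(\abs{A}^{7/2}),
\end{equation*}
which supplies the prefactor of the claimed bound.

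For the logarithmic factor I would plug in \eqref{E:FuncsBoundBound}, namely
\begin{equation*}
    \FunctionsBound \;=\; \BigOh\!\bigl(\clin_{max}\sqrt{\abs{A}}\norm{\caps}_2 + (\cquad_{max}+r_{max})\norm{\caps}_2^{2}\bigr),
\end{equation*}
and multiply by $m=\BigOh(\abs{A})$ inside the logarithm. The remaining task is to verify that
\begin{equation*}
    \log\frac{2m\FunctionsBound}{\varepsilon} \;=\; \BigOh\!\left(\log\frac{\abs{A}(\clin_{max}+\cquad_{max}+r_{max})\norm{\caps}_2}{\varepsilon}\right).
\end{equation*}
This follows from the standard manipulations $\log(ab)=\log a+\log b$ and $\log(a+b)\le \log 2 + \log a + \log b$ (for $a,b\ge 1$), which allow both the extra powers of $\abs{A}$ and the extra factor $\norm{\caps}_2$ appearing with the quadratic term to be absorbed into the big-$\BigOh$ outside the logarithm. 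Composing this simplification with the prefactor $\abs{A}^{7/2}$ yields exactly the stated bound.

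The only genuinely delicate point is this last logarithmic simplification, since one must be careful that each of $\clin_{max},\cquad_{max},r_{max},\norm{\caps}_2,\abs{A},1/\varepsilon$ is at least a constant (or else add a harmless $+1$) so that the $\log(a+b)\le\log a+\log b+\BigOh(1)$ bound is legitimate; everything else is a direct substitution into \cref{T:PathFollowingComplexityBound}.
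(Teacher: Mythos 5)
Your proposal is correct and follows essentially the same route as the paper: restrict to the ball $B$ with $\sigma=\sqrt{2}\norm{\caps}_2$ so that \cref{T:PathFollowingComplexityBound} applies, use $n=2\abs{A}$ and $m=\BigOh(\abs{A})$ (via weak connectivity, $\abs{V}=\BigOh(\abs{A})$) to get the $\abs{A}^{7/2}$ prefactor, and substitute the bound \eqref{E:FuncsBoundBound} on $\FunctionsBound$ inside the logarithm. Your explicit caveat about absorbing the extra $\sqrt{\abs{A}}$ and $\norm{\caps}_2$ factors into the big-$\BigOh$ via logarithm manipulations is a point the paper glosses over, but it does not change the argument.
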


\subsection{Finding a feasible almost-optimal point}\label{SS:FeasibleEpsSolution}

We now have a method that for any feasible \cref{M:ArbitraryQCQP} instance $(f)$ obtains an $\varepsilon$-solution in polynomial time. However, our goal is not just to obtain $\varepsilon$-solutions, but rather to obtain feasible $\varepsilon$-solutions.

\begin{definition}[Feasible $\varepsilon$-solution]
    If $(f)$ is a \cref{M:ArbitraryQCQP} instance with functions $f_0,\ldots,f_m$, then a feasible $\varepsilon$-solution to $(f)$ is a $z$ for which
    \begin{align*}
        f_i(z) &\leq 0, \ \forall{i \in [m]}, \\
        f_0(z)-\OPT &\leq \varepsilon.
    \end{align*}
\end{definition}

So assume that $(f)$ is a \cref{M:ArbitraryQCQP} instance and $\varepsilon > 0$. Then we will find a feasible $\varepsilon$-solution to $(f)$ by making an appropriate choice of $\varepsilon'$ and finding an $\varepsilon'$-solution to to the $\varepsilon'$-hardened program instance $(f^{\varepsilon'})$ corresponding to $(f)$ and $\varepsilon'$.

\begin{program}[$\varepsilon$-hardened program]
Given: $f_0,\ldots,f_m$ as in \cref{M:ArbitraryQCQP} and $\varepsilon > 0$.
\begin{mini*}{z}{f_0(z)}{}{}
    \addConstraint{f_i(z) + \varepsilon}{\leq 0, \quad}{i \in [m]}.
\end{mini*}
\end{program}

By definition, we see that a $z \in B$ is an $\varepsilon'$-solution to $(f^{\varepsilon'})$ iff
\begin{align}
\begin{split}
    f_i(z) &\leq 0, \ \forall{i \in [m]}, \\
    f_0(z)-\OPT[(f^{\varepsilon'})] &\leq \varepsilon.
\end{split}
\end{align}
Hence, if $z$ is an $\varepsilon'$-solution to $(f^{\varepsilon'})$, then it is a feasible point of $(f)$. The problem is that we then only know that
\begin{equation}
    f_0(z)-\OPT[(f)] \leq \left(\OPT[(f^{\varepsilon'})] - \OPT[(f)]\right) + \varepsilon',
\end{equation}
which is of little use without also knowing
\begin{equation*}
    \OPT[(f^{\varepsilon'})] - \OPT[(f)].
\end{equation*}

However, if $(f)$ is a \cref{M:QCQP} instance and $(f^{\varepsilon'})$ is feasible, then we will see in \cref{S:Flows} that
\begin{equation}
    \OPT[(f^{\varepsilon'})] - \OPT \in [0, \varepsilon' \cdot (\clin_{max} + 2\cquad_{max}\caps_{max}) (\abs{V} + 3\abs{A}) \prod_{a \in A}{(1 - 2r_a u_a)^{-1}}].
\end{equation}
Thus, if
\begin{equation}\label{E:EpspCondition}
    \varepsilon' \cdot (\clin_{max} + 2\cquad_{max}\caps_{max}) (\abs{V} + 3\abs{A}) \prod_{a \in A}{(1 - 2r_a u_a)^{-1}} + \varepsilon' \leq \varepsilon,
\end{equation}
and $(f^{\varepsilon'})$ is feasible, then an $\varepsilon'$-solution to $(f^{\varepsilon'})$ is a feasible $\varepsilon$-solution to $(f)$. Defining
\begin{equation}
    \InitialGuarantor_{\varepsilon} := \frac{\varepsilon}{1 + (\clin_{max} + 2\cquad_{max}\caps_{max}) (\abs{V} + 3\abs{A}) \prod_{a \in A}{(1 - 2r_a u_a)^{-1}}},
\end{equation}
we find that \eqref{E:EpspCondition} is equivalent to
\begin{equation*}
    \varepsilon' \leq \InitialGuarantor_{\varepsilon}.
\end{equation*}

By definition, if $(f)$ is strictly feasible, then there must exist an $\varepsilon'>0$ such that $(f^{\varepsilon'})$ is feasible. Thus, if $(f)$ is a strictly feasible instance of \cref{M:QCQP}, then a feasible $\varepsilon$-solution to $(f)$ can be obtained by finding an
\begin{equation*}
    \varepsilon' \in (0, \InitialGuarantor_{\varepsilon}]
\end{equation*}
such that $(f^{\varepsilon'})$ is feasible, and then obtaining an $\varepsilon'$-solution to $(f^{\varepsilon'})$ using the path-following method.

\Cref{C:PathFollowingComplexityBound} already tells us the complexity of obtaining an $\varepsilon'$-solution to $(f^{\varepsilon'})$ for a known $\varepsilon'$. The remaining question is then how we can obtain an appropriate $\varepsilon'$, what complexity that procedure has, and what, if any, lower bound this $\varepsilon'$ has.

Our procedure is inspired by the Phase I program from \cref{S:Boyd}, as it tells us how much we can tighten all constraints at once without losing feasibility. However, we also add support for hardening, as its utility will become apparent soon.

\begin{program}\label{M:Slack}
Given: $f_0,f_1,\ldots,f_m$ corresponding to the constraints of \cref{M:QCQP} and $\varepsilon \geq 0$.
\begin{mini*}{z}{s}{}{}
    \addConstraint{f_i(z) + \varepsilon}{\leq s, \quad}{i \in [m]},
\end{mini*}
or equivalently
\begin{mini*}{z}{s + \varepsilon}{}{}
    \addConstraint{f_i(z)}{\leq s, \quad}{i \in [m]}.
\end{mini*}
\end{program}

Assume that $(f)$ is a strictly feasible \cref{M:QCQP} instance and denote by $(h^{\varepsilon'})$ the corresponding $\varepsilon'$-hardened \cref{M:Slack} instance and by $-b^{(\varepsilon')}$ the optimal value of $(h^{\varepsilon'})$. Then,
\begin{align}\label{E:bFeasibleRelationships}
\begin{split}
    b^{(\varepsilon')} < 0 &\iff (f^{\varepsilon'}) \text{ infeasible}, \\
    b^{(\varepsilon')} \geq 0 &\iff (f^{\varepsilon'}) \text{ feasible}, \\
    b^{(\varepsilon')} > 0 &\iff (f^{\varepsilon'}) \text{ strictly feasible},
\end{split}
\end{align}
and
\begin{equation}\label{E:bRelations}
    b^{(\varepsilon')} = b^{(0)} - \varepsilon'.
\end{equation}
Assume that $\varepsilon' \leq \frac{b^{(0)}}{3}$ and $(z,s)$ is an $\varepsilon'$-solution to $(h^{2\varepsilon'})$. Then,
\begin{equation}
    s \leq -b^{(2\varepsilon')} + \varepsilon' = -(b^{(0)} - 2\varepsilon') + \varepsilon' = 3 \cdot \left(\varepsilon' - \frac{b^{(0)}}{3}\right) \leq 0,
\end{equation}
and
\begin{equation}
    \forall{i \in [m]}: f_i(z) + 2\varepsilon' \leq s + \varepsilon'.
\end{equation}
Hence,
\begin{equation}
    \forall{i \in [m]}: f_i(z) + \varepsilon' \leq s \leq 0,
\end{equation}
and so $(z,s)$ is $(h^{\varepsilon'})$-feasible. As $s \leq 0$, $(z,s)$ is a certificate that
\begin{equation}
    b^{(\varepsilon')} \geq 0.
\end{equation}
Thus, by \eqref{E:bFeasibleRelationships}, $(z,s)$ is a certificate that $(f^{\varepsilon'})$ is feasible.

As every $(h^{2\varepsilon'})$ allows $s$ to get arbitrarily large, we know that $(h^{2\varepsilon'})$ is feasible for all $\varepsilon'$, so we can always use the path-following method to obtain an $\varepsilon'$-solution to $(h^{2\varepsilon'})$. We can thus efficiently find an appropriate $\varepsilon'$ by alternating between halving $\varepsilon'$ and between finding an $\varepsilon'$-solution to $(h^{2\varepsilon'})$. Starting this procedure at
\begin{equation}
    \varepsilon' := \min\{\InitialGuarantor_{\varepsilon}, 1\}
\end{equation}
then ensures that we at most have to run the path-following method
\begin{equation}
    \max(0,\ceil{\log_2{\frac{3}{b^{(0)}}}}) = \BigOh(\log{\frac{1}{b^{(0)}}})
\end{equation}
times. As a side note, the $b^{(0)}$ here equals the $\abs{\OPT[(g)]}$ of \cref{T:BarrierNewtonBound} in \cref{S:Boyd}, or more specifically $-\OPT[(g)]$.

\Cref{A:FindEps} captures the whole procedure we have here discussed, hence for any $\varepsilon > 0$ and strictly feasible \cref{M:QCQP} instance $(f)$ finding an
\begin{equation}\label{E:EpspLB}
    \varepsilon' \geq \min\{\frac{b^{(0)}}{6}, \InitialGuarantor_{\varepsilon}\} > 0
\end{equation}
such that all $\varepsilon'$-solutions to the $\varepsilon'$-hardened $(f^{\varepsilon'})$ are feasible $\varepsilon$-solutions to $(f)$.

\begin{algorithm}[htbp]
\caption{Find appropriate $\varepsilon'$}\label{A:FindEps}
\KwData{$\varepsilon > 0$ and a strictly feasible \cref{M:QCQP} instance $(f)$.}
\KwResult{$\varepsilon' > 0$}
    \begin{outline}[enumerate]
        \1 $\varepsilon' \longleftarrow \min\{\InitialGuarantor_{\varepsilon}, 1\}$
        \1 While True:
            \2 $(h^{2\varepsilon'}) \longleftarrow$ $\varepsilon'$-hardened \cref{M:Slack} instance corresponding to $(f)$
            \2 $(z,s) \longleftarrow $ path-following method on $(h^{2\varepsilon'})$ with precision $\varepsilon'$
            \2 If $s \leq 0$: return $\varepsilon'$
            \2 Else: $\varepsilon' \longleftarrow \varepsilon'/2$
    \end{outline}
\end{algorithm}

Since the complexity bound for the path-following method in \cref{C:PathFollowingComplexityBound} increases as $\varepsilon'$ grows smaller, we can combine \eqref{E:EpspLB} and \cref{C:PathFollowingComplexityBound} to obtain a complexity bound on \cref{A:FindEps}, as captured by \cref{L:FindEpsComplexity}.

\begin{lemma}\label{L:FindEpsComplexity}
    \Cref{A:FindEps} returns an appropriate $\varepsilon'$ in time complexity

    \begin{equation}
        \BigOh\left(\abs{A}^{7/2}\log\left(\frac{\abs{A}(\clin_{max} + \cquad_{max} + r_{max})\norm{\caps}_2}{\min\{b^{(0)}, \InitialGuarantor_{\varepsilon}\}}\right)\log\frac{1}{b^{(0)}}\right)
    \end{equation}
\end{lemma}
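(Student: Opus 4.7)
The plan is to bound the number of outer iterations of \cref{A:FindEps} and the per-iteration cost separately, and then multiply.

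For the iteration count, the key observation is that by \eqref{E:bRelations} and the analysis preceding \eqref{E:EpspLB}, once $\varepsilon' \leq b^{(0)}/3$ the $\varepsilon'$-solution $(z,s)$ returned by the path-following call on $(h^{2\varepsilon'})$ must satisfy $s \leq 0$, so the loop exits. Starting from $\varepsilon' = \min\{\InitialGuarantor_\varepsilon, 1\} \leq 1$ and halving on every unsuccessful iteration, this threshold is reached within at most $\lceil \log_2(3/b^{(0)}) \rceil = \BigOh(\log(1/b^{(0)}))$ iterations. Consequently $\varepsilon'$ never drops below $\min\{b^{(0)}/6, \InitialGuarantor_\varepsilon\}$, exactly as already recorded in \eqref{E:EpspLB}.

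For the per-iteration cost I would observe that $(h^{2\varepsilon'})$ fits the \cref{M:ArbitraryQCQP} framework, differing from $(f)$ only by the addition of one scalar variable $s$, a linear objective, and a shift of each inequality's right-hand side by $s - 2\varepsilon'$. Re-running the estimates of \eqref{E:FuncsBoundBound} on the augmented program shows that $\FunctionsBound$ and the ball radius $\sigma$ change only by constant factors, the numbers of variables and inequalities grow by one, and the parameters $\clin_{max}, \cquad_{max}, r_{max}, \norm{\caps}_2$ are preserved. \cref{C:PathFollowingComplexityBound}, applied with precision $\varepsilon'$, therefore yields a per-iteration cost
$$\BigOh\!\left(\abs{A}^{7/2}\log\frac{\abs{A}(\clin_{max} + \cquad_{max} + r_{max})\norm{\caps}_2}{\varepsilon'}\right),$$
which is monotone decreasing in $\varepsilon'$ and hence largest when $\varepsilon' = \min\{b^{(0)}/6, \InitialGuarantor_\varepsilon\}$.

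Multiplying the iteration count from the first paragraph by the per-iteration cost just derived, and absorbing the constant $6$ into the $\BigOh$, immediately yields the claimed complexity. The main obstacle I anticipate is in the per-iteration bound: although the quadratic forms carry over from $(f)$ to $(h^{2\varepsilon'})$ unchanged, the added auxiliary variable $s$ must be argued to lie in a compact range of size $\BigOh(\norm{\caps}_2)$, so that $\FunctionsBound$ and $\sigma$ entering \cref{T:PathFollowingComplexityBound} on $(h^{2\varepsilon'})$ stay within the asymptotic estimates of \eqref{E:FuncsBoundBound} and the final logarithmic factor collapses into the same shape as in \cref{C:PathFollowingComplexityBound} on $(f)$ itself.
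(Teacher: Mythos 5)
Your proposal is correct and follows essentially the same route as the paper: the paper likewise obtains the bound by multiplying the iteration count $\BigOh(\log\frac{1}{b^{(0)}})$ (from the halving argument and the observation that an $\varepsilon'$-solution of $(h^{2\varepsilon'})$ has $s\leq 0$ once $\varepsilon'\leq b^{(0)}/3$) by the per-call cost of \cref{C:PathFollowingComplexityBound}, with the precision replaced by its lower bound $\min\{b^{(0)}/6,\InitialGuarantor_{\varepsilon}\}$ from \eqref{E:EpspLB}. The per-iteration subtlety you flag (bounding the auxiliary variable $s$ so that $\FunctionsBound$ and $\sigma$ for $(h^{2\varepsilon'})$ stay within the estimates of \eqref{E:FuncsBoundBound}) is in fact left implicit in the paper as well, so your treatment is, if anything, slightly more careful.
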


Once an appropriate $\varepsilon'$ has been obtained, a feasible $\varepsilon$-solution to $(f)$ is obtained by running the path-following method once on $(f^{2\varepsilon'})$ with $\varepsilon'$ precision. The full procedure for finding an $\varepsilon$-optimal solution to a strictly feasible \cref{M:QCQP} instance is shown in \cref{A:FeasibleEpsSolution}.

\begin{algorithm}[htbp]
\caption{Compute a feasible $\varepsilon$-solution}\label{A:FeasibleEpsSolution}
\KwData{$\varepsilon > 0$ and a strictly feasible \cref{M:QCQP} instance $(f)$.}
\KwResult{Feasible $\varepsilon$-solution $x,y$ of $(f)$.}
    \begin{enumerate}
        \item $\varepsilon' \longleftarrow$ \cref{A:FindEps} for $(f)$ and $\varepsilon$
        \item $(f^{2\varepsilon'}) \longleftarrow$ $2\varepsilon'$-hardened $(f)$
        \item \label{I:FinalPathFollow} $x,y \longleftarrow$ path-following method on $(f^{2\varepsilon'})$ with $\varepsilon'$ precision
    \end{enumerate}
\end{algorithm}

This extra run of the path-following method in Step \ref{I:FinalPathFollow} obeys the same complexity bound of \cref{C:PathFollowingComplexityBound} as was used in \cref{L:FindEpsComplexity}. Hence, \cref{A:FindEps,A:FeasibleEpsSolution} have the same complexity, bounded by \cref{L:FindEpsComplexity}.

\begin{corollary}\label{C:FeasibleEpsComplexity}
    If $\varepsilon > 0$ and $(f)$ is a strictly feasible \cref{M:QCQP} instance, then \cref{A:FeasibleEpsSolution} returns a feasible $\varepsilon$-solution to $(f)$ in time complexity
    \begin{equation}
        \BigOh\left(\abs{A}^{7/2}\log\left(\frac{\abs{A}(\clin_{max} + \cquad_{max} + r_{max})\norm{\caps}_2}{\min\{b^{(0)}, \InitialGuarantor_{\varepsilon}\}}\right)\log\frac{1}{b^{(0)}}\right).
    \end{equation}
\end{corollary}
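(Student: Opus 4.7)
The plan is to separate the claim into a correctness half and a complexity half, then argue that both halves have already been essentially assembled earlier in this section. The correctness half must show that the point $(x,y)$ returned in Step \ref{I:FinalPathFollow} of \cref{A:FeasibleEpsSolution} is genuinely $(f)$-feasible and within $\varepsilon$ of $\OPT[(f)]$. The complexity half must show that the cost of \cref{A:FindEps}, plus one additional path-following run on the hardened instance $(f^{2\varepsilon'})$, is still bounded by the expression in \cref{L:FindEpsComplexity}.

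For correctness, I would first invoke the analysis preceding \cref{A:FindEps} to extract that the returned $\varepsilon'$ satisfies \eqref{E:EpspLB}, so in particular $\varepsilon' \leq \InitialGuarantor_{\varepsilon}$ and $(f^{\varepsilon'})$ is certifiably feasible. Next, I would observe that an $\varepsilon'$-solution $(x,y)$ to $(f^{2\varepsilon'})$ automatically satisfies $f_i(x,y) \leq -\varepsilon' \leq 0$ for every inequality of $(f)$, giving genuine $(f)$-feasibility. The remaining optimality gap decomposes as $f_0(x,y) - \OPT[(f)] \leq \varepsilon' + (\OPT[(f^{2\varepsilon'})] - \OPT[(f)])$, and the sensitivity bound quoted from \cref{S:Flows} together with $\varepsilon' \leq \InitialGuarantor_{\varepsilon}$ forces the right-hand side to be at most $\varepsilon$; essentially the same bookkeeping as the derivation of $\InitialGuarantor_{\varepsilon}$ from \eqref{E:EpspCondition}, but applied with a $2\varepsilon'$-perturbation instead of an $\varepsilon'$-perturbation (which only affects the constant factor hidden in the required $\varepsilon' \leq \InitialGuarantor_{\varepsilon}/2$-type inequality, absorbed into the $\BigOh$).

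For complexity, I would invoke \cref{L:FindEpsComplexity} directly for Step~1 of \cref{A:FeasibleEpsSolution}. For Step \ref{I:FinalPathFollow}, I would apply \cref{C:PathFollowingComplexityBound} to the hardened instance $(f^{2\varepsilon'})$, noting that hardening only shifts the constant terms $c_i$ of the quadratic constraints and therefore leaves all of $\abs{A}$, $\clin_{max}$, $\cquad_{max}$, $r_{max}$, and $\norm{\caps}_2$ unchanged. Substituting the lower bound $\varepsilon' \geq \min\{b^{(0)}/6, \InitialGuarantor_{\varepsilon}\}$ into the logarithm shows this single call matches the per-iteration factor inside the bound of \cref{L:FindEpsComplexity}, so the two contributions together remain of the same order. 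The only mildly delicate step I would expect to be the \emph{main obstacle} is tracking that the $2\varepsilon'$ hardening in Step \ref{I:FinalPathFollow} does not spoil either the feasibility certification (which was established for $(f^{\varepsilon'})$, not $(f^{2\varepsilon'})$) or the $\InitialGuarantor_{\varepsilon}$ accounting; this is just checking that \cref{A:FindEps} implicitly delivers strict feasibility of $(f^{2\varepsilon'})$ via the $s \leq 0$ termination condition, which in turn is exactly how the preceding text argues the algorithm's loop terminates correctly. Everything else is a direct composition of \cref{L:FindEpsComplexity} and \cref{C:PathFollowingComplexityBound}.
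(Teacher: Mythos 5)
Your proposal follows essentially the same route as the paper: there the corollary is established simply by noting that Step \ref{I:FinalPathFollow} is one additional path-following call obeying the same bound of \cref{C:PathFollowingComplexityBound} already used inside \cref{L:FindEpsComplexity}, so \cref{A:FeasibleEpsSolution} inherits the complexity of \cref{A:FindEps}, while correctness is inherited from the preceding hardening/$\InitialGuarantor_{\varepsilon}$ analysis exactly as you retrace it. One caveat: the $s \leq 0$ termination certificate of \cref{A:FindEps} only gives $f_i(z) \leq -\varepsilon'$, i.e.\ feasibility of $(f^{\varepsilon'})$ (equivalently $b^{(0)} \geq \varepsilon'$), not strict feasibility of $(f^{2\varepsilon'})$ as you assert, and the $2\varepsilon'$-perturbation affects the correctness constant rather than anything absorbable into the $\BigOh$ --- but the paper is equally loose on both points, so this does not distinguish your argument from its proof.
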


\subsection{From beating absolute error in polynomial-time to FPTAS}

Assume $(f)$ is a feasible \cref{M:QCQP} instance. Then $\OPT > 0$, and so
\begin{equation}
    \val{z}-\OPT \leq \varepsilon \iff \val{z} \leq \OPT+\varepsilon \iff \frac{\val{z}}{\OPT} \leq \frac{\OPT+\varepsilon}{\OPT} = 1 + \frac{\varepsilon}{\OPT}.
\end{equation}
Thus, if $\OPT \geq 1$, then
\begin{equation}
    \val{z}-\OPT \leq \varepsilon \implies \frac{\val{z}}{\OPT} \leq 1+\varepsilon,
\end{equation}
while if $\OPT \in (0,1)$, then
\begin{equation}
    \val{z}-\OPT \leq \varepsilon \OPT \iff \frac{\val{z}}{\OPT} \leq 1+\varepsilon.
\end{equation}

Hence, if we want a complexity bound for beating the relative error $\varepsilon$, we just have to replace $\varepsilon$ by $\varepsilon \OPT$ in \cref{C:FeasibleEpsComplexity}. Since \cref{A:FeasibleEpsSolution} takes the desired absolute precision as input, we need to replace $\OPT$ with something we know up front. This is however unproblematic, as
\begin{equation}
    \OPT \geq F := \min_{a \in \vout{\ssource}}\clin_{a} \sum_{\sink \in \sinks}\demands_\sink.
\end{equation}

\begin{algorithm}[htbp]
\caption{Beat \cref{M:QCQP} instance to within relative error $\varepsilon$}\label{A:ProgramFPTAS}
\KwData{$\varepsilon > 0$ and a strictly feasible \cref{M:QCQP} instance $(f)$.}
\KwResult{$(f)$-feasible point $x,y$ within $\varepsilon$ relative error of $\OPT$.}
    $x,y \longleftarrow$ \cref{A:FeasibleEpsSolution} on $(f)$ and $\varepsilon F$.
\end{algorithm}

\begin{corollary}\label{C:RelativeEpsComplexity}
    \Cref{A:ProgramFPTAS} returns a feasible point within $\varepsilon$ relative error of $\OPT$ in time complexity
    \begin{equation}
        \BigOh\left(\abs{A}^{7/2}\log\left(\frac{\abs{A}(\clin_{max} + \cquad_{max} + r_{max})\norm{\caps}_2}{\min\{b^{(0)}, F\InitialGuarantor_{\varepsilon}\}}\right)\log\frac{1}{b^{(0)}}\right)
    \end{equation}
\end{corollary}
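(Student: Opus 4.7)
The plan is to convert the absolute-error guarantee of \cref{C:FeasibleEpsComplexity} into a relative-error guarantee and then track how the precision parameter propagates into the complexity expression. First, I would establish the lower bound $\OPT[(f)] \geq F$. Since $(f)$ is a strictly feasible \cref{M:QCQP} instance, any feasible flow satisfies $\vin{\sink} \geq \demands_{\sink}$ at every sink, has $y_a \leq x_a$ on every arc (because $\outfuns_a \leq \Id$), and obeys the flow-conservation inequalities at non-source non-sink nodes. Summing these across all non-source nodes (and using that $\ssource$ has no in-arcs and sinks no out-arcs) yields $\vout{\ssource} \geq \sum_{\sink \in \sinks}\demands_{\sink}$. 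The total cost is then at least $\sum_{a \in \OutArcs{\ssource}}\clin_a x_a \geq (\min_{a \in \OutArcs{\ssource}}\clin_a)\vout{\ssource} \geq F$, so $\OPT[(f)] \geq F$.

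Next, as in the discussion directly preceding \cref{A:ProgramFPTAS}, any $(f)$-feasible $z$ with $\val[(f)]{z} - \OPT[(f)] \leq \varepsilon F$ satisfies $\val[(f)]{z}/\OPT[(f)] \leq 1 + \varepsilon F/\OPT[(f)] \leq 1 + \varepsilon$. Hence beating relative error $\varepsilon$ reduces to beating absolute error $\varepsilon F$, which is exactly what \cref{A:ProgramFPTAS} does via a single call of \cref{A:FeasibleEpsSolution} with target precision $\varepsilon F$. Since $(f)$ is strictly feasible by hypothesis, \cref{C:FeasibleEpsComplexity} applies to this call with $\varepsilon$ replaced by $\varepsilon F$, guaranteeing both feasibility and the desired accuracy of the output.

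Finally, I would trace the substitution through the complexity bound. The precision parameter enters the bound of \cref{C:FeasibleEpsComplexity} only inside $\InitialGuarantor_{\varepsilon}$, and inspecting the definition $\InitialGuarantor_{\varepsilon} = \varepsilon / (1 + (\clin_{max} + 2\cquad_{max}\caps_{max})(\abs{V} + 3\abs{A})\prod_{a \in A}(1 - 2r_a\caps_a)^{-1})$ shows it is linear in its subscript, so $\InitialGuarantor_{\varepsilon F} = F \cdot \InitialGuarantor_{\varepsilon}$. Replacing $\InitialGuarantor_{\varepsilon}$ by $F\InitialGuarantor_{\varepsilon}$ in the bound of \cref{C:FeasibleEpsComplexity} reproduces the stated expression verbatim. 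The only step requiring any real thought is the short flow-conservation argument for $\OPT[(f)] \geq F$; everything else is routine substitution and does not introduce additional obstacles.
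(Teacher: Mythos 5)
Your proposal is correct and follows essentially the same route as the paper: the corollary is obtained by replacing the absolute precision in \cref{C:FeasibleEpsComplexity} with $\varepsilon F$, using $\OPT \geq F$ and the linearity of $\InitialGuarantor$ in its subscript so that $\InitialGuarantor_{\varepsilon F} = F\InitialGuarantor_{\varepsilon}$. The only addition is your short flow-conservation argument for $\OPT[(f)] \geq F$, which the paper simply asserts; your justification of it is sound.
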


\subsection{Final algorithm}

Combining the reductions of \cref{S:Reducing} with the optimizers of the current section, we can finally combine everything into \cref{A:FPTAS}.

\begin{algorithm}[htbp]
\caption{Beat \cref{P:ConstantOP} instance to within relative error $\varepsilon$}\label{A:FPTAS}
\KwData{\cref{P:ConstantOP} instance $(f)$ and target relative error $\varepsilon$}
\KwResult{$(f)$-feasible point $\otild{z}$ within $\varepsilon$ relative error of $\OPT$.}
    \begin{outline}[enumerate]
        \1 $(g) \longleftarrow$ \cref{A:ConstantOPToQCQP} on $(f)$
        \1 \label{I:SolveQCQP} $z \longleftarrow$ \cref{A:ProgramFPTAS} on $\varepsilon$ and $(g)$
        \1 $\otild{z} \longleftarrow$ \cref{A:QCQPToConstantOP} on $(g)$ and $z$
    \end{outline}
\end{algorithm}

\begin{proposition}\label{Pr:FinalAlgIsPolynomial}
    If $(f)$ is a strictly feasible \cref{P:ConstantOP} instance and $\varepsilon > 0$, then \cref{A:FPTAS} returns a feasible point within relative error $\varepsilon$ of $\OPT$ in time polynomial in
    \begin{equation*}
        \bits{(f)}+\log{\frac{1}{\varepsilon}}+\log{\frac{1}{b^{(0)}}}.
    \end{equation*}
\end{proposition}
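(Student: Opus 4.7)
The plan is to chain together the reduction from Section~\ref{S:Reducing} with the complexity analysis of Section~\ref{S:Nesterov}, and check that strict feasibility and polynomial input size propagate correctly through every stage of \cref{A:FPTAS}. Concretely, I would first invoke \cref{Pr:ConstantOPReducesToQCQP} to know that $(\text{\cref{A:ConstantOPToQCQP}},\text{\cref{A:QCQPToConstantOP}})$ is a polynomial-time strict approximation-preserving reduction from \cref{P:ConstantOP} to \cref{M:QCQP}. In particular, the instance $(g)$ produced in the first line of \cref{A:FPTAS} is an instance of \cref{M:QCQP} with $\bits{(g)}$ polynomial in $\bits{(f)}$, computed in time polynomial in $\bits{(f)}$; likewise, the third line runs in time polynomial in $\bits{(g)}$, hence polynomial in $\bits{(f)}$, and by \Cref{D:Reduction} preserves the relative error.

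Next I would argue that strict feasibility transfers from $(f)$ to $(g)$. Every strictly feasible flow for $(f)$ can be lifted, via the graph construction in \cref{A:SimplifyConstant} followed by \cref{A:ToStatic}, to a strictly feasible flow on $G'$: the added source, distribution and sink arcs in \cref{A:ConstantOPToQCQP} can be given flow strictly between $0$ and their capacity whenever the underlying \cref{P:ConstantOP} solution has strict slack, so no constraint of $(g)$ is tight. Hence $(g)$ is a strictly feasible \cref{M:QCQP} instance, and Step~\ref{I:SolveQCQP} of \cref{A:FPTAS} may legitimately invoke \cref{A:ProgramFPTAS}.

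Then I would apply \cref{C:RelativeEpsComplexity} to Step~\ref{I:SolveQCQP}: it returns a $(g)$-feasible point within $\varepsilon$ relative error of $\OPT[(g)]$ in time
\[
    \BigOh\left(\abs{A}^{7/2}\log\left(\frac{\abs{A}(\clin_{max} + \cquad_{max} + r_{max})\norm{\caps}_2}{\min\{b^{(0)}, F\InitialGuarantor_{\varepsilon}\}}\right)\log\frac{1}{b^{(0)}}\right).
\]
Here, I would observe that $\abs{A}$, $\clin_{max}$, $\cquad_{max}$, $r_{max}$, $\norm{\caps}_2$, $\caps_{max}$, $\demands_{max}$, and $F$ are all bounded by quantities whose logarithms are polynomial in $\bits{(g)}$, and hence in $\bits{(f)}$. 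Consequently $\log(1/\InitialGuarantor_{\varepsilon})$ is polynomial in $\bits{(f)} + \log(1/\varepsilon)$, and the whole logarithmic factor above is polynomial in $\bits{(f)} + \log(1/\varepsilon) + \log(1/b^{(0)})$. Combined with the polynomial factor $\abs{A}^{7/2}$, Step~\ref{I:SolveQCQP} runs in time polynomial in $\bits{(f)} + \log(1/\varepsilon) + \log(1/b^{(0)})$.

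The main obstacle I anticipate is the careful bookkeeping for the reverse direction: the point $z$ returned by Step~\ref{I:SolveQCQP} is only $(g)$-feasible, and \cref{A:QCQPToConstantOP} must turn it into an $(f)$-feasible point without worsening the relative error. By \cref{L:MergingDoesNoHarm} and \cref{L:RoundDoesNoHarm}, the merging and rounding steps are $\BigOh(\abs{A}^2)$ and do not increase value, while the projection and the piecewise-constant instantiation are linear in $\bits{(g)}$. Since $(\text{\cref{A:ConstantOPToQCQP}},\text{\cref{A:QCQPToConstantOP}})$ is strict approximation-preserving, the resulting $\otild{z}$ satisfies $\val[(f)]{\otild{z}}/\OPT[(f)] \leq \val[(g)]{z}/\OPT[(g)] \leq 1+\varepsilon$, i.e.\ $\otild{z}$ is within relative error $\varepsilon$ of $\OPT[(f)]$. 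Summing the polynomial bounds from all three lines of \cref{A:FPTAS} then gives the claimed overall complexity, completing the proof.
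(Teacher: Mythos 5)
Your proposal is correct and follows essentially the same route as the paper, whose entire proof is simply to cite \cref{Pr:ConstantOPReducesToQCQP} together with \cref{C:RelativeEpsComplexity}; you merely spell out the details (strict-feasibility transfer to the \cref{M:QCQP} instance, polynomiality of the logarithmic factors, and preservation of relative error via \cref{D:Reduction}) that the paper leaves implicit.
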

\begin{proof}
    This follows immediately from \cref{Pr:ConstantOPReducesToQCQP} and \cref{C:RelativeEpsComplexity}.
\end{proof}

\begin{theorem}\label{T:FPTAS}
    If M is a set of strictly feasible instances of \cref{P:ConstantOP} on which $\log{\frac{1}{b^{(0)}}}$ is polynomially bounded in $\bits{in}$, then \cref{A:FPTAS} is an \ac{FPTAS} on M.
\end{theorem}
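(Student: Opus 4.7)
The plan is to obtain the theorem as an essentially immediate corollary of \cref{Pr:FinalAlgIsPolynomial}. Recall that an \ac{FPTAS} on a family $M$ of problem instances is an algorithm that, given any instance $(f)\in M$ and any $\varepsilon>0$, produces a feasible $(1+\varepsilon)$-approximate solution in time polynomial in $\bits{(f)}$ and $1/\varepsilon$. So the task reduces to verifying (i) approximation quality and (ii) the runtime bound; both are handed to us up to a single nuisance term.

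First, I would invoke \cref{Pr:FinalAlgIsPolynomial} to conclude that, for any strictly feasible instance $(f)$ of \cref{P:ConstantOP} and any $\varepsilon>0$, \cref{A:FPTAS} returns an $(f)$-feasible $\otild{z}$ satisfying
\begin{equation*}
    \val[(f)]{\otild{z}}\leq (1+\varepsilon)\,\OPT[(f)]
\end{equation*}
in time polynomial in
\begin{equation*}
    \bits{(f)}+\log\tfrac{1}{\varepsilon}+\log\tfrac{1}{b^{(0)}}.
\end{equation*}
Approximation quality is therefore already established for every instance of $M$.

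Next, I would discharge the runtime requirement by exploiting the defining hypothesis of $M$: the quantity $\log\tfrac{1}{b^{(0)}}$ is polynomially bounded in $\bits{in}=\bits{(f)}$ on $M$. Substituting this bound into the polynomial given by \cref{Pr:FinalAlgIsPolynomial}, the runtime becomes polynomial in $\bits{(f)}+\log\tfrac{1}{\varepsilon}$. Since $\log\tfrac{1}{\varepsilon}\leq \tfrac{1}{\varepsilon}$ for $\varepsilon\in(0,1]$, this is in particular polynomial in $\bits{(f)}+\tfrac{1}{\varepsilon}$ (the $\varepsilon\geq 1$ case is trivial, as any feasible point then suffices). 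Combining this with the approximation guarantee above yields the \ac{FPTAS} property on $M$.

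There is no genuine obstacle here: the only subtlety worth emphasizing in writing is that the $b^{(0)}$ term, which is instance-dependent and outside the control of \cref{A:FPTAS}, is precisely what the hypothesis on $M$ neutralizes. This is why we phrase the result relative to a family $M$ rather than claiming a global \ac{FPTAS}; identifying natural subclasses on which $\log\tfrac{1}{b^{(0)}}$ admits such a polynomial bound is the substantive question left open by this theorem, but it is not part of the proof itself.
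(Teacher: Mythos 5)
Your proposal is correct and takes the same route as the paper, which simply observes that the theorem follows immediately from \cref{Pr:FinalAlgIsPolynomial}; you merely spell out the substitution of the hypothesis on $\log\frac{1}{b^{(0)}}$ and the fact that polynomial dependence on $\log\frac{1}{\varepsilon}$ is stronger than the $1/\varepsilon$ dependence an \ac{FPTAS} requires. The parenthetical that for $\varepsilon\geq 1$ ``any feasible point suffices'' is not quite right (a feasible point need not be within factor $1+\varepsilon$ of $\OPT$), but it is also unnecessary, since for $\varepsilon\geq 1$ the term $\log\frac{1}{\varepsilon}$ is nonpositive and the runtime bound remains polynomial in $\bits{(f)}+\frac{1}{\varepsilon}$ anyway.
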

\begin{proof}
    This follows immediately from \cref{Pr:FinalAlgIsPolynomial}.
\end{proof}

    \section{Linear losses}\label{S:Linear} Another special case of \cref{P:GeneralizedConstant} is that of piecewise constant $\prodcpus$ paired with linear losses:
\begin{equation}
    \outfuns_a:
    \begin{cases}
        [0,\caps_a] \to R_{\geq 0}, \\
        x \mapsto x - r_a x =: \outscaler_a x,
    \end{cases}
    \quad \forall{a \in A}.
\end{equation}

If $(f)$ is such an instance, then applying \cref{A:ToStatic} to $(f)$ will produce an instance $(f')$ of \cref{M:QCQP} where all quadratic terms are 0, hence a \acf{LP} and an instance of \cref{M:LP}.

\begin{program}\label{M:LP}
Given: A weakly connected digraph $G=(V,A,\outscaler,\clin,\caps,\demands)$ such that $\forall{a \in A}: \outscaler_a \in [0,1] \a \clin_a \geq 0$.

\begin{mini*}{x,y}{\sum_{a \in A}{\clin_a x_a}}{}{}
    \addConstraint{y_a}{\leq \outscaler_a x_a, \quad}{a \in A}
    \addConstraint{x_a}{\leq \caps_a, \quad}{a \in A}
    \addConstraint{y_a}{\geq 0, \quad}{a \in A}
    \addConstraint{\vin{v}}{\geq \vout{v}, \quad}{v \in V \setminus (\{\ssource\} \cup \sinks)}
    \addConstraint{\vin{\sink}}{\geq \demands_\sink, \quad}{\sink \in \sinks}
\end{mini*}
\end{program}

First, as $(f')$ is an \acs{LP}, we can in the case of rational coefficients employ the Ellipsoid method to in polynomial time exactly solve $(f')$ hence $(f)$ \citep{grötschel1988geometric}. Moreover, as \cref{M:LP} is a special case of \cref{M:QCQP}, we know that \cref{C:QCQPBarrierNewtonBound} holds for $(f')$, hence that \cref{T:FPTAS} holds for $(f)$. Finally, by the same logic as in \cref{L:OptIsFeasible}, optimally solving this \ac{LP} corresponds to finding a min-cost generalized flow with linear losses, for which \cite{wayne2002polynomial} provides an exact polynomial-time combinatorial algorithm and a corresponding combinatorial \ac{FPTAS}.

\chapter{Sensitivity analysis}\label{Ch:Sensitivity}
    As models simplify reality, it is important that a good solution to one model instance is also a good solution to similar model instances. Otherwise, optimizing on the model is not of much use, as any obtained solutions may anyway fare poorly in practice. As a proxy to this objective, \textit{sensitivity analysis} looks at how the optimal value of a program changes as instance parameters change.

We will in this chapter first define this sensitivity notion rigorously. Then, we will review some general \ac{CP} theory for local and global sensitivity analysis. Finally, we will make use of the flow properties of our optimal solutions to derive more explicit sensitivity bounds.
    \section{For convex programs}\label{S:Convex} \begin{program}[General \acs{CP}]\label{M:GeneralCP}
Given: Convex $f_0,g_1,g_2,\ldots,g_m: \R^n \to \R$, affine $h: \R^n \to \R^l$, and $g: x \mapsto (g_1,\ldots,g_m)$.
\begin{mini*}{z}{f_0(z)}{}{}
    \addConstraint{g(z)}{\leq 0}
    \addConstraint{h(z)}{= 0}.
\end{mini*}
\end{program}

Assume we are given some \cref{M:GeneralCP} instance $(f)$, and denote by $(f_{(\IneqPerturb,\EqPerturb)})$ the corresponding $(\IneqPerturb,\EqPerturb)$-perturbed program instance.

\begin{program}[$(\IneqPerturb,\EqPerturb)$-perturbed program]
Given: $f_0,g,h$ as in \cref{M:GeneralCP}, $\IneqPerturb \in \R^m$ and $\EqPerturb \in \R^l$.
\begin{mini*}{z}{f_0(z)}{}{}
    \addConstraint{g(z)}{\leq \IneqPerturb}
    \addConstraint{h(z)}{= \EqPerturb}.
\end{mini*}
\end{program}

For ease of notation, we define
\begin{equation}
\begin{split}
    &\FunOPT: \R^m \times \R^l \ni (\IneqPerturb,\EqPerturb) \mapsto \OPT[(f_{(\IneqPerturb,\EqPerturb)})] \in \extR, \\
    &\DeltaOPT{(\IneqPerturb,\EqPerturb)} := \FunOPT(\IneqPerturb,\EqPerturb) - \FunOPT(0,0) = \OPT[(f_{(\IneqPerturb,\EqPerturb)})] - \OPT[(f)].
\end{split}
\end{equation}

\subsection{Global sensitivity}

The following subsection is entirely based on \cite{boyd2004convex}.

\begin{definition}[Lagrangian and dual function]\label{D:Lagrangian}
    Assume $f_0$, $g$ and $h$ correspond to some \cref{M:GeneralCP} instance $(f)$. The corresponding \textit{Lagrangian} is then
    \begin{equation*}
        \hat{L}: \R^n \times \R^m \times \R^l \ni (z,\lambda,\nu) \mapsto f_0(z) + \lambda^T g(z) + \nu^T h(z) \in \R,
    \end{equation*}
    while the corresponding (Lagrangian) dual function is
    \begin{equation*}
        L: \R^m \times \R^l \ni (\lambda,\nu) \mapsto \min_{z \in \R^n}\hat{L}(z,\lambda,\nu) \in \extR.
    \end{equation*}
\end{definition}

To analyze the sensitivity of some \cref{M:GeneralCP} instance $(f)$, we will heavily rely on its corresponding dual program instance $(L)$.

\begin{program}[Dual program]
    Given: $L$ as in \cref{D:Lagrangian}.
    \begin{maxi*}{\lambda,\nu}{L(\lambda,\nu)}{}{}
        \addConstraint{\lambda}{\geq 0}
    \end{maxi*}    
\end{program}

Now suppose $(f)$ is a strictly feasible \cref{M:GeneralCP} instance, $(L)$ is its dual, and $\lambda_*,\nu_*$ optimize $(L)$. By Slater's condition, strong duality must then hold, so
\begin{equation}
    L(\lambda_*,\nu_*)=\FunOPT(0,0).
\end{equation}

Assume moreover that $\IneqPerturb$ and $\EqPerturb$ are fixed and $z$ is a $(f_{(\IneqPerturb,\EqPerturb)})$-feasible point, hence that
\begin{align*}
    g(z) &\leq \IneqPerturb, \\
    h(z) &= \EqPerturb.
\end{align*}

Then,
\begin{align*}
    &\FunOPT(0,0) \\
    &=L(\lambda_*,\nu_*) \\
    &\leq \hat{L}(z,\lambda_*,\nu_*) \\
    &= f_0(z) + \lambda_*^T g(z) + \nu_*^T h(z) \\
    &= f_0(z) + \lambda_*^T g(z) + \nu_*^T \EqPerturb \\
    &\leq f_0(z) + \lambda_*^T \IneqPerturb + \nu_*^T \EqPerturb.
\end{align*}
As $z$ was an arbitrary $(f_{(\IneqPerturb,\EqPerturb)})$-feasible point, we conclude that
\begin{equation}
    \FunOPT(0,0) \leq \FunOPT(\IneqPerturb,\EqPerturb) + \lambda_*^T \IneqPerturb + \nu_*^T \EqPerturb.
\end{equation}

\begin{lemma}\label{L:GlobalSensitivityLB}
    If $(f)$ is a strictly feasible \cref{M:GeneralCP} instance, $(L)$ is its dual, $\lambda,\nu$ optimize $(L)$, and $\FunOPT$ corresponds to $(f)$, then
    \begin{equation*}
        \forall{(\IneqPerturb,\EqPerturb) \in \R^m \times \R^l}: \Delta_{(\IneqPerturb,\EqPerturb)} \geq - \lambda^T \IneqPerturb - \nu^T \EqPerturb.
    \end{equation*}
\end{lemma}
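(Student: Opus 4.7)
The plan is to essentially formalize the derivation the paper sketched just before stating the lemma: combine strong duality at the unperturbed instance with a weak-duality-style bound against any feasible point of the perturbed instance.

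First I would invoke Slater's condition: since $(f)$ is strictly feasible and convex, strong duality holds, which gives $L(\lambda,\nu)=\FunOPT(0,0)$ for the optimizers $\lambda\geq 0,\nu$ of the dual $(L)$. Next, fix an arbitrary $(\IneqPerturb,\EqPerturb)\in\R^m\times\R^l$. If $(f_{(\IneqPerturb,\EqPerturb)})$ is infeasible, then $\FunOPT(\IneqPerturb,\EqPerturb)=+\infty$ and the inequality is trivial, so I may assume it is feasible and pick any feasible $z$, meaning $g(z)\leq\IneqPerturb$ and $h(z)=\EqPerturb$.

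Then the key chain of inequalities is
\begin{equation*}
\FunOPT(0,0)=L(\lambda,\nu)\leq\hat L(z,\lambda,\nu)=f_0(z)+\lambda^Tg(z)+\nu^Th(z)\leq f_0(z)+\lambda^T\IneqPerturb+\nu^T\EqPerturb,
\end{equation*}
where the first inequality uses the definition of $L$ as a minimum over all $z\in\R^n$, and the last inequality uses $\lambda\geq 0$ together with $g(z)\leq\IneqPerturb$ and $h(z)=\EqPerturb$. Taking the infimum over all feasible $z$ for the perturbed program yields
\begin{equation*}
\FunOPT(0,0)\leq\FunOPT(\IneqPerturb,\EqPerturb)+\lambda^T\IneqPerturb+\nu^T\EqPerturb,
\end{equation*}
which rearranges to the claim $\DeltaOPT{(\IneqPerturb,\EqPerturb)}\geq-\lambda^T\IneqPerturb-\nu^T\EqPerturb$.

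There is no real obstacle here; the only subtlety is ensuring the argument is vacuously valid when the perturbed instance is infeasible (handled by the $+\infty$ convention on $\FunOPT$) and making explicit the sign handling in the step $\lambda^Tg(z)\leq\lambda^T\IneqPerturb$, which requires both $\lambda\geq 0$ (guaranteed by dual feasibility) and the componentwise inequality $g(z)\leq\IneqPerturb$. Everything else is a direct invocation of Slater's condition and the definition of the Lagrangian dual.
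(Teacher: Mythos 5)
Your proposal is correct and follows essentially the same argument as the paper: Slater's condition gives strong duality at the unperturbed instance, and the chain $\FunOPT(0,0)=L(\lambda,\nu)\leq\hat L(z,\lambda,\nu)\leq f_0(z)+\lambda^T\IneqPerturb+\nu^T\EqPerturb$ over all perturbed-feasible $z$ yields the bound. Your explicit handling of the infeasible case via $\FunOPT(\IneqPerturb,\EqPerturb)=+\infty$ is a minor tidying the paper leaves implicit, but otherwise the two derivations coincide.
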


\subsection{Local sensitivity}

The following subsection is entirely based on \cite{boyd2004convex}.

Assume $(f)$ is a strictly feasible \cref{M:GeneralCP} instance, $(L)$ is its corresponding dual, and $\lambda,\nu$ optimize $(L)$. We will denote by $e_i$ the $i$-th unit vector of $\R^{m}$:
\begin{equation}
    e_i := (\underbrace{0,...,0}_{i-1},1,\underbrace{0,...,0}_{m-i}).
\end{equation}

By \cref{L:GlobalSensitivityLB}, we then know that
\begin{equation*}
    \DeltaOPT{(te_i,0)} \geq -\lambda^T te_i = -t\lambda_{i}.
\end{equation*}
Hence,
\begin{equation}
    \forall{t > 0}: \frac{\Delta_{(te_i,0)}}{t} \geq -\lambda_{i}
\end{equation}
and
\begin{equation}
    \forall{t < 0}: \frac{\Delta_{(te_i,0)}}{t} \leq -\lambda_{i}.
\end{equation}

Thus, if the partial derivatives of $\FunOPT$ exist at $(0,0)$, then we conclude that
\begin{equation}
    \at{\frac{\partial \FunOPT(\IneqPerturb,\EqPerturb)}{\partial \IneqPerturb_i}}{(\IneqPerturb,\EqPerturb)=(0,0)} = -\lambda_{i}.
\end{equation}
By a similar argument, we then also find that
\begin{equation}
    \at{\frac{\partial \FunOPT(\IneqPerturb,\EqPerturb)}{\partial \EqPerturb_i}}{(\IneqPerturb,\EqPerturb)=(0,0)} = -\nu_{i}.
\end{equation}

\begin{proposition}\label{Pr:LocalSensitivity}
    If $(f)$ is a \cref{M:GeneralCP} instance, $(L)$ is its dual, $\lambda,\nu$ optimize $(L)$, $\FunOPT$ corresponds to $(f)$, and $\at{\nabla \FunOPT}{(0,0)}$ exists, then
    \begin{align*}
        \at{\frac{\partial \FunOPT(\IneqPerturb,\EqPerturb)}{\partial \IneqPerturb_i}}{(\IneqPerturb,\EqPerturb)=(0,0)} &= -\lambda_{i}, \\
        \at{\frac{\partial \FunOPT(\IneqPerturb,\EqPerturb)}{\partial \EqPerturb_i}}{(\IneqPerturb,\EqPerturb)=(0,0)} &= -\nu_{i}.
    \end{align*}
\end{proposition}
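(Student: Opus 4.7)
The plan is to derive each partial derivative by applying \cref{L:GlobalSensitivityLB} along one-parameter perturbations in the coordinate directions and then taking a limit, using the assumed differentiability of $\FunOPT$ at $(0,0)$ to merge the one-sided bounds into an equality. The whole proof will essentially formalize the informal computation carried out in the paragraphs immediately preceding the proposition statement.

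First I would handle the inequality-multiplier case. Fix $i\in[m]$, let $e_i$ denote the $i$-th standard basis vector of $\R^m$, and consider perturbations of the form $(te_i,0)$ for $t\in\R$. Applying \cref{L:GlobalSensitivityLB} gives
\begin{equation*}
    \DeltaOPT{(te_i,0)} \;\geq\; -\lambda^T(te_i) - \nu^T\cdot 0 \;=\; -t\lambda_i.
\end{equation*}
For $t>0$ divide through by $t$ to obtain $\DeltaOPT{(te_i,0)}/t \geq -\lambda_i$, and for $t<0$ divide through by $t$ (which flips the inequality) to obtain $\DeltaOPT{(te_i,0)}/t \leq -\lambda_i$. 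Taking $t\downarrow 0$ and $t\uparrow 0$ respectively gives a lower bound on the right-hand derivative and an upper bound on the left-hand derivative of $\FunOPT$ at $(0,0)$ in direction $e_i$. Since $\at{\nabla \FunOPT}{(0,0)}$ is assumed to exist, these one-sided derivatives agree and equal $\partial \FunOPT/\partial \IneqPerturb_i$, so both bounds must be attained simultaneously, yielding the claimed value $-\lambda_i$.

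Next I would repeat the argument with perturbations $(0,te_i)$ in $\R^l$ to handle $\nu_i$. The inequality from \cref{L:GlobalSensitivityLB} reads $\DeltaOPT{(0,te_i)} \geq -t\nu_i$, and the same sign analysis on $t>0$ and $t<0$ produces matching one-sided bounds on $\partial \FunOPT/\partial \EqPerturb_i$. A minor subtlety here, and the step I expect to require the most care, is that the equality-multiplier $\nu_i$ is unrestricted in sign, so one must check that the direction of the sandwich is actually forced by the assumption of differentiability rather than by any sign hypothesis on $\nu$. Concretely, the existence of $\at{\nabla \FunOPT}{(0,0)}$ means the two one-sided limits coincide, and since one of them is bounded below by $-\nu_i$ and the other bounded above by $-\nu_i$, equality must hold regardless of the sign of $\nu_i$.

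Beyond that sign bookkeeping, the argument is routine: no KKT conditions, no implicit function theorem, and no convexity machinery beyond what is already packaged into \cref{L:GlobalSensitivityLB}. The entire proof will therefore amount to two symmetric two-line limit arguments plus a sentence invoking differentiability to conclude equality from the matching one-sided inequalities.
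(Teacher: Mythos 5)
Your proposal is correct and follows essentially the same route as the paper: apply \cref{L:GlobalSensitivityLB} to perturbations $(te_i,0)$ and $(0,te_i)$, divide by $t$ with the sign flip for $t<0$, and invoke the assumed existence of $\at{\nabla \FunOPT}{(0,0)}$ to turn the matching one-sided bounds into equalities. The sign remark about $\nu_i$ is handled the same way implicitly in the paper, so there is nothing to add.
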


\subsection{For our parameters}

\cref{M:CP} has no equality constraints, so we only have to consider $\IneqPerturb$ when applying the above analysis to it. Assume then that $(f)$ is a strictly feasible \cref{M:CP} instance, $(L)$ is its dual, and $\lambda$ optimizes $(L)$.

In \cref{M:CP}, we solve
\begin{mini*}{x,y}{\sum_{a \in A}{c_a(x_a)}}{}{}
    \addConstraint{y_a}{\leq\outfuns_a(x_a), \quad}{a \in A}
    \addConstraint{x_a}{\leq \caps_a, \quad}{a \in A}
    \addConstraint{y_a}{\geq 0, \quad}{a \in A}
    \addConstraint{\vin{v}}{\geq \vout{v}, \quad}{v \in V \setminus (\{\ssource\} \cup \sinks)}
    \addConstraint{\vin{\sink}}{\geq \demands_\sink, \quad}{\sink \in \sinks}.
\end{mini*}
This can equivalently be formulated as
\begin{mini}{x,y}{\sum_{a \in A}{c_a(x_a)}}{\label{E:SensitivityCP}}{}
    \addConstraint{y_a - \outfuns_a(x_a)}{\leq 0, \quad}{a \in A}
    \addConstraint{x_a - \caps_a}{\leq 0, \quad}{a \in A}
    \addConstraint{-y_a}{\leq 0, \quad}{a \in A}
    \addConstraint{\vin{v} - \vout{v}}{\geq 0, \quad}{v \in V \setminus (\{\ssource\} \cup \sinks)}
    \addConstraint{\vin{\sink} - \demands_\sink}{\geq 0, \quad}{\sink \in \sinks}.
\end{mini}
We will assume that $\IneqPerturb,\lambda$ follow the structure of \eqref{E:SensitivityCP}, and use the notation
\begin{align}\label{E:CompositeDualStructure}
\begin{split}
    \IneqPerturb &= (\IneqPerturb^{(1)},\IneqPerturb^{(2)},\IneqPerturb^{(3)},\IneqPerturb^{(4)},\IneqPerturb^{(5)}), \\
    \lambda &= (\lambda^{(1)},\lambda^{(2)},\lambda^{(3)},\lambda^{(4)},\lambda^{(5)}).
\end{split}
\end{align}
The superscripts here refer to the corresponding constraint groups in \eqref{E:SensitivityCP}.

If $\at{\nabla \FunOPT}{\IneqPerturb=0}$ exists, then \cref{Pr:LocalSensitivity} we can infer the local sensitivity of $(f)$ from $\lambda$. For example, the local sensitivities of $\caps_a$ and $\demands_\sink$ in $(f)$ are then $-\lambda_a^{(2)}$ and $-\lambda_\sink^{(5)}$ respectively.

However, this assumption on $\at{\nabla \FunOPT}{\IneqPerturb=0}$ warrants a cautionary note. Namely, \cref{A:GeneralizedConstantToCP} will always output an instances of \cref{M:CP} where many of the arcs have their $\costs_a$ or $\outfuns_a$ defined only on exactly $[0,\caps_a]$. Hence, if these are not easily extendable beyond $[0,\caps_a]$, it makes little sense to talk about relaxing the constraints guaranteeing $x_a,y_a \in [0,\caps_a]$, and so $\at{\nabla \FunOPT}{\IneqPerturb=0}$ cannot entirely exist. However, the partial derivatives of all the other inequality constraints may then still exist, in which case we can obtain local sensitivities for all these other constraints.

Assume now furthermore that $(f)$ has quadratic losses in its arcs, $z^*$ solves $(f)$ optimally, and $z=z^*$. Then,
\begin{equation}
    y_a - \outfuns_a(x_a) \leq 0 \iff y_a - x_a \leq -r_a x_a^2 \leq 0 \iff y_a - x_a \leq -r_a \cdot (x_a^*)^2.
\end{equation}
Denote by $z_{(\IneqPerturb)}^*$ all optimal solutions to $(f^{(\IneqPerturb)})$, and assume moreover that
\begin{equation*}
    z_{(\IneqPerturb)}^* \to z^* \text{ as } \IneqPerturb \to 0.
\end{equation*}
The, by the univariate chain rule, the local sensitivity of $(f)$ on $r_a$ is
\begin{equation}
    \at{\frac{\partial \FunOPT(\IneqPerturb)}{\partial \IneqPerturb_a^{(1)}}}{\IneqPerturb=0}\frac{d (-r_a \cdot (x_a^*)^2)}{dr_a} = \lambda_a^{(1)} \cdot (x_a^*)^2.
\end{equation}

\begin{corollary}\label{C:OurCPSensitivity}
    If $(f)$ is a \cref{M:CP} instance, $(L)$ is its dual, $\lambda$ optimizes $(L)$, $\FunOPT$ corresponds to $(f)$, $\at{\nabla \FunOPT}{(0,0)}$ exists, and $\IneqPerturb,\lambda$ are structured as in \eqref{E:CompositeDualStructure}, then the local sensitivity of $\caps_a$ and $\demands_\sink$ at $\IneqPerturb=0$ are $-\lambda_a^{(2)}$ and $-\lambda_\sink^{(5)}$ respectively.

    If moreover
    \begin{equation*}
        z_{(\IneqPerturb)}^* \to z^* \text{ as } \IneqPerturb \to 0,
    \end{equation*}
    then the local sensitivity of $(f)$ to $r_a$ is
    \begin{equation*}
        \lambda_a^{(1)} \cdot (x_a^*)^2.
    \end{equation*}
\end{corollary}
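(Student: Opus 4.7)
The plan is to reduce every sensitivity claim to the local sensitivity formula of \cref{Pr:LocalSensitivity}, which identifies the partial derivatives of $\FunOPT$ at $(0,0)$ with the negatives of the optimal dual multipliers. The key observation is that each of the three parameters $\caps_a$, $\demands_\sink$ and $r_a$ enters exactly one of the five inequality groups in \eqref{E:SensitivityCP}, and in a way that can be matched to a perturbation of that group's right-hand side.

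First I would handle $\caps_a$ and $\demands_\sink$, which enter linearly as bare right-hand side constants. Since $\caps_a$ appears only in the second constraint group $x_a - \caps_a \leq 0$, a right-hand side perturbation by $\IneqPerturb_a^{(2)}$ is identical to replacing $\caps_a$ by $\caps_a + \IneqPerturb_a^{(2)}$, so \cref{Pr:LocalSensitivity} immediately gives $-\lambda_a^{(2)}$. The same reasoning applied to $\demands_\sink$ in the fifth constraint group, after writing it in the signed form implicit in \eqref{E:CompositeDualStructure}, yields $-\lambda_\sink^{(5)}$.

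The non-trivial part is the sensitivity with respect to $r_a$, which enters through the nonlinear term $r_a x_a^2$ inside $\outfuns_a$ rather than as a pure right-hand side constant. Here I would exploit the quadratic-loss structure together with the continuity hypothesis $z_{(\IneqPerturb)}^* \to z^*$: for infinitesimal changes in $r_a$ the optimum $x_a$ stays arbitrarily close to $x_a^*$, so I may freeze $x_a = x_a^*$ inside the quadratic term and rewrite the first constraint as $y_a - x_a \leq -r_a (x_a^*)^2$. A change $dr_a$ then acts as a right-hand side perturbation $\IneqPerturb_a^{(1)} = -(x_a^*)^2\, dr_a$ of the first constraint group, and the univariate chain rule combined with \cref{Pr:LocalSensitivity} delivers $\lambda_a^{(1)}(x_a^*)^2$.

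The main obstacle is justifying the ``freeze-at-optimum'' substitution in the $r_a$ argument; this is exactly the role played by the extra hypothesis $z_{(\IneqPerturb)}^* \to z^*$, which is essentially an envelope-theorem type statement ensuring that we are not implicitly differentiating $x_a$ alongside $r_a$. The only other subtlety is the sign bookkeeping between the two $\geq 0$ constraint groups in \eqref{E:SensitivityCP} and the standard $\leq 0$ form used in \cref{M:GeneralCP,Pr:LocalSensitivity}, which is absorbed into the sign convention chosen for the components of $\lambda$ in \eqref{E:CompositeDualStructure}.
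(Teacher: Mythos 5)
Your proposal matches the paper's own argument: $\caps_a$ and $\demands_\sink$ are read off directly from \cref{Pr:LocalSensitivity} as right-hand-side perturbations of the second and fifth constraint groups, and $r_a$ is handled by rewriting the loss constraint as $y_a - x_a \leq -r_a (x_a^*)^2$, using the hypothesis $z_{(\IneqPerturb)}^* \to z^*$ to justify freezing $x_a$ at $x_a^*$, and applying the univariate chain rule to obtain $\lambda_a^{(1)}(x_a^*)^2$. This is essentially the same derivation as in the paper, including the identification of the freeze-at-optimum step as the point where the extra hypothesis is needed.
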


\subsection{With barrier method}

Assume that $(f)$ is a \cref{M:GeneralCP} instance and $(L)$ is its dual. The above analysis assumes we have access to exact optimizers of $(L)$, but the barrier method will never find the exact solution to $(f)$ nor to $(L)$. However, if $z_{(t)}$ is on the central path of $(f)$, then \cite{boyd2004convex} provides a method to directly obtain $(L)$-feasible $\lambda_{(t)},\nu_{(t)}$ such that

\begin{equation}
    f_0(z_{(t)})-L(\lambda_{(t)},\nu_{(t)}) = \frac{m}{t}.
\end{equation}

Let $t^{(i)}$ denote the $i$-th $t$-value, $z^{(i)}$ denote the corresponding central point, and $(\lambda^{(i)},\nu^{(i)})$ denote the $(L)$-feasible point obtained from $z^{(i)}$. If the barrier method exactly solves every central path program it encounters, which it almost does, then we can hence obtain a sequence
\begin{equation}
    \{(\lambda^{(i)},\nu^{(i)})\}_{i\in\N}
\end{equation}
such that
\begin{equation}
    f_0(z^{(i)}) - L(\lambda^{(i)},\nu^{(i)}) \to 0 \text{ as } i \to \infty.
\end{equation}
By weak duality, we conclude that
\begin{equation}
    L(\lambda^{(i)},\nu^{(i)}) \to \OPT[(L)] \text{ as } i \to \infty.
\end{equation}
Hence, $(\lambda^{(i)},\nu^{(i)})$ must converge to some optimal $\lambda_*,\nu_*$, since $L$ is the pointwise minimum of a set of affine functions $\hat{L}(z,\cdot,\cdot)$, thus concave. As \cref{L:GlobalSensitivityLB} holds for all optimal $\lambda_*,\nu_*$, this implies that
\begin{equation}
    \Delta_{(\IneqPerturb,\EqPerturb)} + \varepsilon^{(i)} \geq - \IneqPerturb^T \lambda^{(i)} - \EqPerturb^T \nu^{(i)},
\end{equation}
with
\begin{equation}
    0 \leq \varepsilon^{(i)} \to 0 \text{ as } i \to \infty.
\end{equation}
Hence, as $i$ grows large, \cref{L:GlobalSensitivityLB} approximately holds for the $z^{(i)},\lambda^{(i)},\nu^{(i)}$ returned by the barrier method.

Moreover, if the partial derivatives of $\FunOPT$ exist at $(0,0)$, then \cref{Pr:LocalSensitivity} implies that the $(\lambda,\nu)$ optimizing the original dual program is unique, so then
\begin{align}
\begin{split}
    -(\lambda^{(i)})_j &\to \at{\frac{\partial \FunOPT(\IneqPerturb,\EqPerturb)}{\partial \IneqPerturb_j}}{(\IneqPerturb,\EqPerturb)=(0,0)} \\
    -(\nu^{(i)})_j &\to \at{\frac{\partial \FunOPT(\IneqPerturb,\EqPerturb)}{\partial \EqPerturb_j}}{(\IneqPerturb,\EqPerturb)=(0,0)} \\
     \text{ as } i &\to \infty.
\end{split}
\end{align}
Thus, if $\at{\nabla \FunOPT}{(0,0)}$ exists and $i$ is large, then \cref{Pr:LocalSensitivity} approximately holds for the $z^{(i)},\lambda^{(i)},\nu^{(i)}$ returned by the barrier method.

\begin{corollary}
    If $(f)$ is a \cref{M:GeneralCP} instance, $\{z^{(i)}\}_{i \in \N}$ is a sequence of central points, and $(\lambda^{(i)},\nu^{(i)})$ is computed from $z^{(i)}$, then
    \begin{align*}
        \Delta_{(\IneqPerturb,\EqPerturb)} + \varepsilon^{(i)} &\geq - \IneqPerturb^T \lambda^{(i)} - \EqPerturb^T \nu^{(i)}\\
        0 \leq \varepsilon^{(i)} &\to 0 \text{ as } i \to \infty.
    \end{align*}

    If moreover $\at{\nabla \FunOPT(\IneqPerturb,\EqPerturb)}{(\IneqPerturb,\EqPerturb)=(0,0)}$ exists, then
    \begin{align*}
        -(\lambda^{(i)})_j &\to \at{\frac{\partial \FunOPT(\IneqPerturb,\EqPerturb)}{\partial \IneqPerturb_j}}{(\IneqPerturb,\EqPerturb)=(0,0)} \\
        -(\nu^{(i)})_j &\to \at{\frac{\partial \FunOPT(\IneqPerturb,\EqPerturb)}{\partial \EqPerturb_j}}{(\IneqPerturb,\EqPerturb)=(0,0)} \\
         \forall{j} \text{ as } i &\to \infty.
    \end{align*}
    
\end{corollary}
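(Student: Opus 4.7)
The plan is to split the statement into its two asserted convergences. The first is a quantitative version of \cref{L:GlobalSensitivityLB} obtained by plugging the approximately-optimal central duals $(\lambda^{(i)},\nu^{(i)})$ into that lemma's argument and tracking the resulting slack. The second promotes value-convergence of the dual to iterate-convergence of $(\lambda^{(i)},\nu^{(i)})$, which together with \cref{Pr:LocalSensitivity} identifies the limit with the gradient of $\FunOPT$ at the origin. Throughout I would rely on Slater's condition (strict feasibility of $(f)$) to invoke strong duality, exactly as was done to derive \cref{L:GlobalSensitivityLB,Pr:LocalSensitivity}.

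For the first inequality, I would mirror the chain used to prove \cref{L:GlobalSensitivityLB} but with $(\lambda^{(i)},\nu^{(i)})$ in place of an exact optimal dual. Fix any $(f_{(\IneqPerturb,\EqPerturb)})$-feasible point $z$. Dual feasibility of $\lambda^{(i)} \geq 0$ together with $g(z) \leq \IneqPerturb$ and $h(z) = \EqPerturb$ gives
\begin{equation*}
    L(\lambda^{(i)},\nu^{(i)}) \leq \hat{L}(z,\lambda^{(i)},\nu^{(i)}) \leq f_0(z) + (\lambda^{(i)})^T \IneqPerturb + (\nu^{(i)})^T \EqPerturb.
\end{equation*}
Taking the infimum over $z$ yields $L(\lambda^{(i)},\nu^{(i)}) \leq \FunOPT(\IneqPerturb,\EqPerturb) + (\lambda^{(i)})^T \IneqPerturb + (\nu^{(i)})^T \EqPerturb$. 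Setting $\varepsilon^{(i)} := \FunOPT(0,0) - L(\lambda^{(i)},\nu^{(i)})$, weak duality gives $\varepsilon^{(i)} \geq 0$, and the cited barrier-method identity $f_0(z^{(i)}) - L(\lambda^{(i)},\nu^{(i)}) = m/t^{(i)}$ combined with $f_0(z^{(i)}) \to \FunOPT(0,0)$ (as $t^{(i)} \to \infty$) yields $\varepsilon^{(i)} \to 0$. Rearranging delivers $\DeltaOPT{(\IneqPerturb,\EqPerturb)} + \varepsilon^{(i)} \geq -\IneqPerturb^T \lambda^{(i)} - \EqPerturb^T \nu^{(i)}$.

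For the second part, strong duality yields $L(\lambda^{(i)},\nu^{(i)}) \to \OPT[(L)] = \FunOPT(0,0)$, so every limit point of the sequence is an optimal dual pair. When $\at{\nabla \FunOPT}{(0,0)}$ exists, \cref{Pr:LocalSensitivity} pins down the optimal dual components uniquely as the negatives of the partial derivatives, so there is a single such limit point $(\lambda_*,\nu_*)$. The plan is then to deduce full-sequence convergence from uniqueness of the limit point: if any coordinate of $(\lambda^{(i)},\nu^{(i)})$ failed to converge, one could extract a subsequence bounded away from $(\lambda_*,\nu_*)$ whose values $L(\lambda^{(i_k)},\nu^{(i_k)})$ still tend to the supremum, producing a second optimizer and contradicting uniqueness. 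The componentwise convergence claims then follow immediately.

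The main obstacle is the step of extracting a convergent subsequence at all, which the excerpt's hand-wave (``$L$ is concave'') does not quite cover on its own. To close this gap I would argue that the sequence $\{(\lambda^{(i)},\nu^{(i)})\}$ must be bounded: concavity of $L$ together with the existence of an interior optimizer (provided by \cref{Pr:LocalSensitivity}) means that every superlevel set of $L$ at a value strictly below $\OPT[(L)]$ is bounded, and $L(\lambda^{(i)},\nu^{(i)})$ is eventually above any such threshold. With boundedness in hand, Bolzano--Weierstrass and the uniqueness argument above close out the proof.
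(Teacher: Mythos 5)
Your proposal is correct, and it takes a partly different and in places tighter route than the paper. For the first display the paper does not rerun the duality chain with the approximate duals: it first asserts that $(\lambda^{(i)},\nu^{(i)})$ converges to some optimizer of the dual (justified only by the remark that $L$ is concave) and then argues that \cref{L:GlobalSensitivityLB}, which holds at the limit, holds approximately along the sequence, leaving $\varepsilon^{(i)}$ implicit. You instead plug the $(L)$-feasible central duals directly into the proof of \cref{L:GlobalSensitivityLB} and identify $\varepsilon^{(i)}$ with the duality gap $f_0(z^{(i)})-L(\lambda^{(i)},\nu^{(i)})=m/t^{(i)}$; this yields a perturbation-independent, quantitative error term, needs only weak duality, and requires no convergence of the dual iterates at all. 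For the second display both arguments hinge on the uniqueness of the dual optimizer supplied by \cref{Pr:LocalSensitivity}, but you rightly observe that the paper's ``must converge since $L$ is concave'' is not by itself a proof, and you close the gap with boundedness of the iterates (bounded superlevel sets of the concave dual function once its optimal set is a single point --- equivalently, Slater's condition gives a nonempty compact dual optimal set), Bolzano--Weierstrass, and the fact that every limit point is dual optimal. Two cosmetic points: ``interior optimizer'' should read ``unique (hence bounded) optimizer,'' since the multiplier may well lie on the boundary $\lambda_j=0$; and the limit-point step tacitly uses upper semicontinuity of $L$ (an infimum of affine functions), which deserves a word. Neither affects correctness.
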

    \section{For generalized flows}\label{S:Flows} As previously mentioned, if $(f)$ is a \cref{M:CP} instance with graph $G_{(f)}$ and $z$ is an optimal solution to $(f)$, then $z$ is a concave generalized $\ssource$-$\sinks$ flow on $G_{(f)}$. We can take advantage of this to deduce bounds on the optimal value increase $\DeltaOPT{\omega}$ associated with some $\omega$-perturbation of $(f)$. Unlike in \cref{S:Convex}, these bounds are functions of the instance parameters, hence do not require $(f)$ to first be solved optimally.

Note that all bounds deduced in this section are under the assumption that the perturbed program is still well-defined and feasible.

\subsection{Changing single demand}

Our entire analysis will be based on changes in demand. Hence, assume first the simplest situation: $(f)$ is a \cref{M:CP} instance with graph $G_{(f)}=(V,A)$, $z^*=(x^*,y^*)$ is an optimal solution to $(f)$, and $\IneqPerturb$ corresponds to changing the demand of a single sink by some $\delta > 0$.

\begin{definition}[Residual graph]\label{D:ResidualGraph}
    Assume $(f)$ is a \cref{M:CP} instance with graph $G_{(f)}=(V,A)$ and $z^*=(x^*,y^*)$ is an optimal solution to $(f)$. The \textit{residual graph} of $z^*,(f)$ is then defined as
    \begin{equation*}
        G_{(f)}^{z^*} = (V,\cup_{a \in A}\{a',a''\}),
    \end{equation*}
    where the $a',a''$ corresponding to an arc $a \in A$ are defined as follows:
    \begin{align*}
        a' &:= (u,v), \\
        a'' &:= (v,u), \\
        \caps_{a'} &:= \caps_a - x_a^*, \\
        \caps_{a''} &:= y_a^*, \\
        \outfuns_{a'}&: x \mapsto \outfuns_a(x_a^* + x) - y_a^*, \\
        \outfuns_{a''}&: y \mapsto x_a^* - \outfuns_a^{-1}(y_a^* - y).
    \end{align*}
\end{definition}

We see from \cref{D:ResidualGraph} that a residual graph for $(f)$ and $z^*$ can easily be generated in polynomial time. Moreover, we see that if $a \in A$, then sending $x$ flow into the corresponding $a'$ of $G_{(f)}^{z^*}$ and adding the resulting $(x,y)$ to $(x_a^*,y_a^*)$ corresponds to increasing $x_a^*$ by $x$ and enforcing
\begin{equation}\label{E:NoWaste}
    \outfuns_a(x_a^*)=\outfuns_a(y_a^*).
\end{equation}
Similarly, sending $y$ flow along the arc $a''$ then corresponds to reducing $y_a^*$ by $y$ and enforcing \eqref{E:NoWaste}. Thus, if a flow on $G_{(f)}$ is modified according to a flow on $G_{(f)}^{z^*}$ we obtain another flow on $G_{(f)}$. Moreover, any flow $z$ in $G_{(f)}$ can be rewritten as $z^*$ plus the modification corresponding to some $z'$ in the residual graph $G_{(f)}^{z^*}$. Hence, to modify $z^*$ into a flow with $\delta$ higher demand in some sink $\sink$ is equivalent to finding a flow in $G_{(f)}^{z^*}$ that sends $\delta$ flow into $\sink$.

By construction, the forward arcs $A'$ of $G_{(f)}^{z^*}$ are lossy, while its backward arcs $A''$ are gainful. The loss associated with getting $\delta$ more flow to $\sinks$ is thus upper bounded by the hypothetical situation of sending all flow in $G_{(f)}^{z^*}$ through all of the forward arcs once and none of the backward arcs. This corresponds to sending residual flow along a path containing all arcs of $A$. For the outgoing flow of such a hypothetical path to be $\delta$, its incoming flow is at most
\begin{equation}
    \delta\prod_{a \in A}{\MinGrad_a^{-1}},
\end{equation}
where
\begin{equation}
    \MinGrad_a^{-1} := \sup_{y\in[0,\outfuns_a(\caps_a)]}(\outfuns_{a}^{-1})'(y) = (\inf_{x\in[0,u_a]}\outfuns_{a}'(x))^{-1} = (\outfuns_{a}'(u_a))^{-1}.
\end{equation}

Thus, the new demand can be beaten by a modification of $z^*=(x^*,y^*)$ that increases $\vout{\ssource}^*$ by at most
\begin{equation}
    \delta\prod_{a \in A}{\MinGrad_a^{-1}}.
\end{equation}
The corresponding cost increase is then upper bounded by
\begin{equation}
    \costs'_{max}\delta\prod_{a \in A}{\MinGrad_a^{-1}},
\end{equation}
where
\begin{equation}
    \costs'_{max} := \max_{a \in A}\max_{x \in [0,\caps_a]} \frac{d\costs_a(x)}{dx} = \max_{a \in A} \left( \at{\frac{d\costs_a(x)}{dx}}{x = \caps_a} \right).
\end{equation}

Since $z^*$ is optimal and $\outfuns$ are concave and upper-bounded by $\Id$, we know that $\vout{\sources}$ must increase by at least $\delta$ for $\vin{\sinks}$ to increase by $\delta$. Hence,
\begin{equation}
    \DeltaOPT{\IneqPerturb} \geq \costs'_{min}\delta,
\end{equation}
where
\begin{equation}
    \costs'_{min} := \min_{a \in A}\min_{x \in [0,\caps_a]} \frac{d\costs_a(x)}{dx} = \min_{a \in A} \left( \at{\frac{d\costs_a(x)}{dx}}{x = 0} \right).
\end{equation}

We combine this with similar reasoning for the case where instead $\delta < 0$, arriving at \cref{Pr:DemandSensitivity}.

\begin{proposition}\label{Pr:DemandSensitivity}
    Assume that $(f)$ is a \cref{M:CP} instance, the perturbation $\IneqPerturb$ changes the demand of a single sink in $(f)$ by some $\delta \in \R$, and the $\IneqPerturb$-perturbed instance $(f_\IneqPerturb)$ is feasible. Then,
    \begin{equation*}
        \DeltaOPT{\IneqPerturb} \in
        \begin{cases}
            [\costs'_{min}\delta,\ \costs'_{max}\delta\prod_{a \in A}{\MinGrad_a^{-1}}], &\text{if } \delta \geq 0, \\
            [\costs'_{max}\delta\prod_{a \in A}{\MinGrad_a^{-1}},\ \costs'_{min}\delta], &\text{if } \delta < 0.
        \end{cases}
    \end{equation*}
\end{proposition}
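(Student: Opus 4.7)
The strategy is to phrase the perturbation through the residual graph $G^{z^*}_{(f)}$ of \cref{D:ResidualGraph}. Fix an optimal $z^* = (x^*, y^*)$ for $(f)$; then every flow on $G_{(f)}$ decomposes uniquely as $z^*$ plus a residual flow on $G^{z^*}_{(f)}$, and shifting the demand at the single affected sink $\sink$ by $\delta$ is equivalent to requiring that this residual flow delivers a net $\delta$ units into $\sink$. Hence $\DeltaOPT{\IneqPerturb}$ equals the minimum cost difference between such a modified flow and $z^*$, and it suffices to produce two-sided bounds on this quantity as the residual flow varies.

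For the upper bound when $\delta \geq 0$, the plan is to exhibit a specific feasible residual flow that realizes the bound. Using only forward residual arcs along an $\ssource$-to-$\sink$ path, which exists by the assumed feasibility of $(f_\IneqPerturb)$, I will back-propagate from the target output $\delta$ at $\sink$: because each $\outfuns_a$ is concave with slope at least $\MinGrad_a$ on $[0,\caps_a]$, its inverse has slope at most $\MinGrad_a^{-1}$, so each arc forces its upstream residual flow to be at most a factor of $\MinGrad_a^{-1}$ larger than its downstream one. Iterating this estimate along the path bounds the additional outflow at $\ssource$ by $\delta \prod_{a \in A} \MinGrad_a^{-1}$, and multiplying by the maximum cost slope $\costs'_{max}$ gives the upper bound. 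For the lower bound, since $\outfuns_a \leq \Id$ on every arc, any feasible flow for $(f_\IneqPerturb)$ must push at least $\delta$ more units out of $\ssource$ than $z^*$ does; multiplying by the minimum cost slope $\costs'_{min}$ then yields $\DeltaOPT{\IneqPerturb} \geq \costs'_{min} \delta$.

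The case $\delta < 0$ is handled symmetrically by routing $|\delta|$ units of residual flow along backward arcs from $\sink$ back to $\ssource$; the backward residual outflow functions $\outfuns_{a''}$ from \cref{D:ResidualGraph} satisfy the mirror-image slope estimates, and the signs of the two bounds flip exactly as the statement requires. The step I expect to be the most delicate is the lower bound: turning the intuitive claim that lossy arcs preclude free amplification of output into the precise inequality $\vout{\ssource}(z)-\vout{\ssource}(z^*)\geq\delta$ for every flow $z$ feasible for $(f_\IneqPerturb)$ requires carefully combining flow conservation with $\outfuns_a \leq \Id$ and invoking the optimality of $z^*$, so that no pre-existing slack in $z^*$ can silently absorb part of the extra demand on its way from the source.
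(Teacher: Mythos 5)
Your overall route matches the paper's: fix an optimal $z^*$, pass to the residual graph of \cref{D:ResidualGraph}, bound the extra source outflow needed to deliver $\delta$ more units by the product of worst-case inverse slopes $\prod_{a\in A}\MinGrad_a^{-1}$ over forward residual arcs, multiply by $\costs'_{max}$, and mirror the argument for $\delta<0$. One small inaccuracy there is harmless: feasibility of $(f_\IneqPerturb)$ does not give you a \emph{forward-only} residual $\ssource$-$\sink$ path, only some residual routing of $\delta$ into $\sink$ that may also use backward arcs; but backward residual arcs are gainful, so they can only improve the product bound, which is exactly the paper's informal ``worst case is a path through all forward arcs'' argument.

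The genuine gap is in your lower bound. The inequality you commit to proving, $\vout{\ssource}(z)-\vout{\ssource}(z^*)\geq\delta$ for \emph{every} $(f_\IneqPerturb)$-feasible $z$, is false, so no combination of flow conservation, $\outfuns_a\leq\Id$, and optimality of $z^*$ will establish it. Take a \cref{M:CP} instance with two internally disjoint $\ssource$-$\sink$ paths: one lossless with marginal cost $3$, one with $\outfuns(x)=x/2$ and marginal cost $1$, and demand $1$ at $\sink$. The optimum $z^*$ uses the lossy path, so $\vout{\ssource}$ equals $2$; yet the perturbed demand $1+\delta$ (for $\delta<1$) can be met by routing everything on the lossless path, with source outflow $1+\delta<2$. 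Moreover, even where such an inequality holds, ``multiplying by the minimum cost slope'' does not directly compare costs, since the costs $\costs_a$ sit on individual arcs and a perturbed-feasible flow may use entirely different arcs than $z^*$. The repair (which is also what is needed to make the paper's own one-line justification rigorous) is to argue on an optimal solution $z'$ of $(f_\IneqPerturb)$ itself: remove $\delta$ units of delivery at the perturbed sink and propagate the decrease backwards along flow-carrying arcs towards $\ssource$; since each $\outfuns_a\leq\Id$, every arc flow on this backward trace decreases by at least $\delta$, so the resulting flow is $(f)$-feasible and its cost is at least $\costs'_{min}\delta$ below that of $z'$, giving $\OPT[(f)]\leq\OPT[(f_\IneqPerturb)]-\costs'_{min}\delta$, i.e.\ $\DeltaOPT{\IneqPerturb}\geq\costs'_{min}\delta$.
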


\subsection{More general perturbations}\label{SS:GeneralPerturbationsSensitivity}

Imagine now instead that $\IneqPerturb$ corresponds to changing some arc capacity $\caps_a$ in $(f)$ by $\delta < 0$. In the best case, there exists an optimal solution $z^*$ of $(f)$ such that $x_a^* \leq \caps_a + \delta$. Then $z^*$ is also an optimal solution of the perturbed program $(f_\IneqPerturb)$, so $\DeltaOPT{\IneqPerturb} = 0$. In the worst case, every optimal solution of $(f)$ utilizes arc $a$ fully. However, then $(f_\IneqPerturb)$ is still no harder than a perturbation of $(f)$ that keeps $\caps_a$ constant but adds
\begin{equation}
    \outfuns_a(\caps_a) - \outfuns_a(\caps_a + \delta) \leq -\delta
\end{equation}
demand to head$(a)$.

If $\caps_a$ is instead changed by $\delta > 0$, we similarly see that in the worst case $(f_\IneqPerturb)$ has an optimal solution that sends at most $\caps_a$ flow into arc $a$, and that $(f_\IneqPerturb)$ is no easier than if some demand is reduced by
\begin{equation}
    \outfuns_a(\caps_a) - \outfuns_a(\caps_a + \delta) \leq -\delta.
\end{equation}

We combine these observations with the arguments used to derive \cref{Pr:DemandSensitivity}, arriving at \cref{Pr:CapsSensitivity}.

\begin{proposition}\label{Pr:CapsSensitivity}
    Assume that $(f)$ is a \cref{M:CP} instance, the perturbation $\IneqPerturb$ changes the capacity of a single arc in $(f)$ by some $\delta \in \R$, and the $\IneqPerturb$-perturbed instance $(f_\IneqPerturb)$ is feasible. Then,
    \begin{equation*}
        \DeltaOPT{\IneqPerturb} \in
        \begin{cases}
            [-\costs'_{max}\delta\prod_{a \in A}{\MinGrad_a^{-1}},\ 0], &\text{if } \delta \geq 0, \\
            [0,\ -\costs'_{max}\delta\prod_{a \in A}{\MinGrad_a^{-1}}], &\text{if } \delta < 0.
        \end{cases} 
    \end{equation*}
\end{proposition}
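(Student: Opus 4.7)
The plan is to reduce both cases of \cref{Pr:CapsSensitivity} to \cref{Pr:DemandSensitivity} via the residual-graph construction of \cref{D:ResidualGraph}, reusing the same worst-case forward-path analysis that underlies the earlier proposition. Write $a=(u,v)$ for the arc whose capacity is perturbed. The essential observation is that perturbing $\caps_a$ by $\delta$ changes the attainable out-flow $\outfuns_a(\caps_a)$ by at most $|\delta|$: since $\outfuns_a$ is concave with $\outfuns_a(0)=0$ and $\outfuns_a\leq\Id$, one has $|\outfuns_a(\caps_a+\delta)-\outfuns_a(\caps_a)|\leq|\delta|$. Hence a capacity perturbation of size $|\delta|$ is at most as disruptive as a demand perturbation of size $|\delta|$ at the head $v$ of $a$, and the rest of the analysis mirrors the proof of \cref{Pr:DemandSensitivity}.

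First I would handle $\delta\geq 0$. The upper bound $\DeltaOPT{\IneqPerturb}\leq 0$ is immediate from feasibility-set inclusion: every $(f)$-feasible point stays feasible for $(f_\IneqPerturb)$, so the optimum can only drop. For the lower bound I would start from an optimal $\tilde z=(\tilde x,\tilde y)$ of $(f_\IneqPerturb)$. If $\tilde x_a\leq\caps_a$ then $\tilde z$ is already $(f)$-feasible and $\DeltaOPT{\IneqPerturb}\geq 0$. Otherwise, I would cap $\tilde x_a$ at $\caps_a$ and $\tilde y_a$ at $\outfuns_a(\caps_a)$; this creates at $v$ a violation of $\vin v\geq\vout v$ (or $\vin v\geq\demands_v$, if $v$ is a sink) by at most $\outfuns_a(\caps_a+\delta)-\outfuns_a(\caps_a)\leq\delta$. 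I would repair this deficiency by pushing $\delta$ of additional flow to $v$ in the residual graph of the capped flow which, by the same worst-case forward-path bound used in \cref{Pr:DemandSensitivity}, requires at most $\delta\prod_{a\in A}\MinGrad_a^{-1}$ additional out-flow at $\ssource$ and costs at most $\costs'_{max}\delta\prod_{a\in A}\MinGrad_a^{-1}$. Since capping itself only decreases cost, the repaired flow is $(f)$-feasible with value at most $\val[(f_\IneqPerturb)]{\tilde z}+\costs'_{max}\delta\prod_{a\in A}\MinGrad_a^{-1}$, which yields the lower bound.

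The case $\delta<0$ is fully symmetric: $\DeltaOPT{\IneqPerturb}\geq 0$ by feasibility-set inclusion, and for the upper bound I would start from an optimal $z^*$ of $(f)$, cap $x^*_a$ at $\caps_a+\delta$ when needed, and repair the at-most-$|\delta|$ deficiency at $v$ by the same residual-graph push. The main obstacle I foresee is a minor technical one: \cref{Pr:DemandSensitivity} is literally phrased for perturbing demand at a sink, while here the repair happens at the head $v$ of an internal arc and $v$ need not be a sink. I would verify that the worst-case forward-path bound $\prod_{a\in A}\MinGrad_a^{-1}$ does not depend on the nature of the target node (its proof uses only residual paths from $\ssource$ to the target), and that the over-supply at $v$ introduced by the repair is permitted because the flow-conservation constraint $\vin v\geq\vout v$ of \cref{M:CP} is already an inequality.
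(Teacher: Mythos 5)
Your proposal is correct and follows essentially the same route as the paper: the capping-and-residual-repair step is exactly the paper's observation that the capacity perturbation is ``no harder/easier'' than a demand perturbation of at most $\abs{\delta}$ at head$(a)$, after which both arguments invoke the worst-case forward-path bound behind \cref{Pr:DemandSensitivity} together with the feasible-set-inclusion direction for the trivial bound. Your explicit remark that the target node need not be a sink (and that the path bound is insensitive to this) is a detail the paper leaves implicit, but it does not change the approach.
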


Similar arguments can be made for any perturbation affecting only a single inequality constraint of $(f)$.

If $\IneqPerturb$ is instead an arbitrary vector, hence a combination of many such "atomic" perturbations, then the corresponding upper (lower) bound is just the sum of the upper (lower) bounds for the atomic perturbations.

\begin{proposition}\label{Pr:CompositeSensitivity}
    Assume that $(f)$ is a \cref{M:CP} instance and the $\IneqPerturb$-perturbed instance $(f_\IneqPerturb)$ is feasible. Then,
    \begin{equation*}
        \DeltaOPT{\IneqPerturb} \in [\sum_{i \in [m]}\text{lower-bound}(\DeltaOPT{\IneqPerturb*e_i}), \sum_{i \in [m]}\text{upper-bound}(\DeltaOPT{\IneqPerturb*e_i})],
    \end{equation*}
    where
    \begin{equation*}
        \IneqPerturb * e_i = (\underbrace{0,...,0}_{i-1},\IneqPerturb_i,\underbrace{0,...,0}_{m-i}).
    \end{equation*}
\end{proposition}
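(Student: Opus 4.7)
The plan is to reduce the composite perturbation to a sequence of atomic ones and telescope. Enumerate the coordinates $[m]$ as $i_1, \ldots, i_m$ and construct a chain of \cref{M:CP} instances $(f) = (g_0), (g_1), \ldots, (g_m) = (f_\IneqPerturb)$, where $(g_k)$ is obtained from $(g_{k-1})$ by applying the single atomic perturbation $\IneqPerturb * e_{i_k}$ and leaving all other coordinates fixed. The identity
\[
  \DeltaOPT{\IneqPerturb} \;=\; \sum_{k=1}^{m} \bigl(\OPT[(g_k)] - \OPT[(g_{k-1})]\bigr)
\]
then reduces the statement to bounding each summand by the atomic bounds already established in \cref{Pr:DemandSensitivity}, \cref{Pr:CapsSensitivity}, and the analogous statements mentioned for the other inequality constraints of \cref{M:CP} at the end of \cref{SS:GeneralPerturbationsSensitivity}.

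Next, apply those atomic sensitivity propositions to each difference $\OPT[(g_k)] - \OPT[(g_{k-1})]$. The subtlety is that the atomic bounds are in terms of the quantities $\costs'_{min}, \costs'_{max}$ and $\prod_{a} \MinGrad_a^{-1}$ of the instance being perturbed, which here is $(g_{k-1})$ rather than $(f)$. Since $\costs_a$ and $\outfuns_a$ are untouched by the perturbations in $\IneqPerturb$, the only things that can change along the chain are capacities and demands, and hence the only dependence of these quantities on the chain is through $\caps$. One then verifies that the bound obtained at $(g_{k-1})$ is dominated by the bound as it reads at $(f)$: if the $k$-th constraint is a demand or a flow-conservation inequality, the relevant quantities coincide, and for perturbations to $\caps_a$ one uses monotonicity of $\frac{d\costs_a}{dx}$ and $\outfuns_a'$ in $x$ together with a uniform capacity envelope along the chain.

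The main obstacle is ensuring intermediate feasibility, so that the atomic bounds apply at every step. The remedy is to order the atomic perturbations so that every relaxation (each capacity increase, each demand decrease, each right-hand-side increase of a $\geq$ constraint) is performed before any tightening; relaxations preserve feasibility automatically, and since $(f_\IneqPerturb)$ is assumed feasible, convexity of the feasible region of a \ac{CP} in constraint-space guarantees that after the relaxation phase every subsequent tightening also lands in a feasible instance. Together with the monotonicity argument above, this secures both the bound and its applicability at each link of the telescope and hence the proposition.
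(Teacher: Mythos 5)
The paper itself gives no proof of \cref{Pr:CompositeSensitivity} beyond the one-sentence assertion preceding it, so your telescoping chain is the natural way to formalize the claim, and your feasibility fix is essentially right: doing all relaxations before any tightening keeps every intermediate instance feasible simply because each intermediate feasible set contains the feasible set of $(f_\IneqPerturb)$ (each of its constraints is at least as weak); this is plain monotonicity of the constraint sets, and the appeal to ``convexity of the feasible region in constraint-space'' is neither needed nor the correct justification.

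The genuine gap is the constant-domination step. The atomic bounds of \cref{Pr:DemandSensitivity,Pr:CapsSensitivity} involve $\costs'_{max}=\max_{a}\costs_a'(\caps_a)$ and $\prod_{a}\MinGrad_a^{-1}$ with $\MinGrad_a=\outfuns_a'(\caps_a)$, and both depend on the capacities. Under your ordering the capacity increases are executed first, so at a later demand- or conservation-tightening step the instance $(g_{k-1})$ has enlarged capacities and hence a larger $\costs'_{max}$ and a larger $\prod_a\MinGrad_a^{-1}$ than $(f)$: your claim that the relevant quantities ``coincide'' at such steps is false, and the unspecified ``uniform capacity envelope'' can only yield the weaker bound with constants evaluated at the inflated capacities, not the bound as stated. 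Nor can reordering repair this. Consider a single arc $a=(\ssource,\sink)$ with $\outfuns_a(x)=x-x^2$, $\caps_a=0.25$, $\costs_a=\Id$ and demand $0.18$, perturbed by raising the capacity to $0.49$ and the demand to $0.2499$: the perturbed instance is feasible, $\DeltaOPT{\IneqPerturb}\approx 0.49-0.235=0.255$, while the sum of the atomic upper bounds evaluated at $(f)$ is $0+\costs'_{max}\cdot 0.0699\cdot\MinGrad_a^{-1}=0.0699\cdot 2\approx 0.14$; and placing the capacity increase last makes the intermediate instance infeasible, so no ordering of your chain recovers the stated constants. Consequently the telescoping argument (and the proposition read literally) only goes through either with the constants taken at the perturbed capacities or for perturbations that do not enlarge capacities --- the latter being the only case the paper actually uses, namely the $\varepsilon$-hardening argument of \cref{SS:FeasibleEpsSolution}, which consists solely of demand increases, where your chain works verbatim and the constants genuinely coincide.
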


\subsection{For our \acs{QCQP}}

In the special case where $(f)$ is an instance of \cref{M:QCQP}, we find that
\begin{equation}
\begin{array}{r@{\;}l}
    \MinGrad_a &= \at{\frac{d}{dx}\left( x - r_a x^2 \right)}{x=u_a} = 1 - 2 r_a u_a, \quad \forall{a \in A}, \\
    \costs'_{min} &= \clin_{min}, \\
    \costs'_{max} &= \max_{a \in A} \left( \clin_a + 2\cquad_a\caps_a \right) \leq \clin_{max} + 2\cquad_{max}\caps_{max}.
\end{array}
\end{equation}
Combining this with \cref{Pr:DemandSensitivity,Pr:CapsSensitivity}, we arrive at \cref{C:QCQPDemandSensitivity,C:QCQPCapsSensitivity}.

\begin{corollary}\label{C:QCQPDemandSensitivity}
    Assume that $(f)$ is a \cref{M:QCQP} instance, the perturbation $\IneqPerturb$ changes the demand of a single sink in $(f)$ by some $\delta \in \R$, and the $\IneqPerturb$-perturbed instance $(f_\IneqPerturb)$ is feasible. Then,
    \begin{equation*}
        \DeltaOPT{\IneqPerturb} \in
        \begin{cases}
            [\clin_{min}\delta,\ (\clin_{max} + 2\cquad_{max}\caps_{max}) \delta\prod_{a \in A}\left( 1 - 2 r_a u_a \right)^{-1}], &\text{if } \delta \geq 0, \\
            [(\clin_{max} + 2\cquad_{max}\caps_{max}) \delta\prod_{a \in A}\left( 1 - 2 r_a u_a \right)^{-1},\ \clin_{min}\delta], &\text{if } \delta < 0.
        \end{cases}
    \end{equation*}
\end{corollary}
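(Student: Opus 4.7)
The plan is to obtain this corollary as a direct specialization of \cref{Pr:DemandSensitivity} to the QCQP setting, by computing explicit expressions for the quantities $\MinGrad_a$, $\costs'_{min}$ and $\costs'_{max}$ that appear in the general bound. Since \cref{M:QCQP} is a subclass of \cref{M:CP} with $\outfuns_a(x) = \outscaler_a x - r_a x^2$ (taking $\outscaler_a = 1$ so that $\outfuns_a$ captures pure quadratic losses as in the statement) and $\costs_a(x) = \clin_a x + \cquad_a x^2$, \cref{Pr:DemandSensitivity} applies verbatim, and only these three constants need to be evaluated.

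First I would compute $\MinGrad_a$. By definition $\MinGrad_a = \inf_{x \in [0,\caps_a]} \outfuns_a'(x)$, and since $\outfuns_a'(x) = 1 - 2r_a x$ is decreasing on $[0,\caps_a]$, this infimum is attained at $x = \caps_a$, giving $\MinGrad_a = 1 - 2r_a \caps_a$. Next, $\costs_a'(x) = \clin_a + 2\cquad_a x$ is non-decreasing on $[0,\caps_a]$, so its minimum on this interval is $\clin_a$ at $x = 0$ and its maximum is $\clin_a + 2\cquad_a \caps_a$ at $x = \caps_a$. Taking the min (resp.\ max) over arcs then yields $\costs'_{min} = \clin_{min}$ and $\costs'_{max} = \max_{a \in A}(\clin_a + 2\cquad_a \caps_a) \leq \clin_{max} + 2\cquad_{max}\caps_{max}$, with the final inequality used to loosen the bound into the form stated in the corollary.

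Plugging these three expressions into the two cases of \cref{Pr:DemandSensitivity} gives exactly the intervals claimed, completing the proof. The only subtlety to double-check is that the product $\prod_{a \in A} \MinGrad_a^{-1} = \prod_{a \in A}(1 - 2r_a \caps_a)^{-1}$ remains positive and finite, i.e.\ that $1 - 2r_a \caps_a > 0$ for every $a \in A$; this however is guaranteed by the standing assumption (made in \cref{S:Problem}) that capacities are chosen small enough that $\outfuns_a$ is strictly increasing on $[0,\caps_a]$, which is equivalent to $\outfuns_a'(\caps_a) > 0$. There is no real obstacle here beyond these routine derivative computations and the verification that the scaling constant $\outscaler_a$ in the definition of $\outfuns_a$ does not affect $\MinGrad_a$ in a way that breaks the argument, since the bound on $\costs'_{max}$ in the stated corollary uses $\outscaler_a = 1$.
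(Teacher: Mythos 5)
Your proposal is correct and follows essentially the same route as the paper: it specializes \cref{Pr:DemandSensitivity} by computing $\MinGrad_a = 1 - 2r_a\caps_a$, $\costs'_{min} = \clin_{min}$, and $\costs'_{max} = \max_{a \in A}(\clin_a + 2\cquad_a\caps_a) \leq \clin_{max} + 2\cquad_{max}\caps_{max}$, exactly as the paper does (the paper likewise evaluates these derivatives with the loss written as $x - r_a x^2$, i.e.\ $\outscaler_a = 1$). Your extra remark that $1 - 2r_a\caps_a > 0$ by the standing assumption that $\outfuns_a$ is strictly increasing on $[0,\caps_a]$ is a harmless additional check that the paper leaves implicit.
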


\begin{corollary}\label{C:QCQPCapsSensitivity}
    Assume that $(f)$ is a \cref{M:QCQP} instance, the perturbation $\IneqPerturb$ changes the capacity of a single arc in $(f)$ by some $\delta \in \R$, and the $\IneqPerturb$-perturbed instance $(f_\IneqPerturb)$ is feasible. Then,
    \begin{equation*}
        \DeltaOPT{\IneqPerturb} \in
        \begin{cases}
            [-\left( \clin_{max} + 2\cquad_{max}\caps_{max} \right)\delta\prod_{a \in A}\left( 1 - 2 r_a u_a \right)^{-1},\ 0], &\text{if } \delta \geq 0, \\
            [0,\ -\left( \clin_{max} + 2\cquad_{max}\caps_{max} \right)\delta\prod_{a \in A}\left( 1 - 2 r_a u_a \right)^{-1}], &\text{if } \delta < 0.
        \end{cases} 
    \end{equation*}
\end{corollary}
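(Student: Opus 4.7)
The plan is to directly specialize \cref{Pr:CapsSensitivity} to the \cref{M:QCQP} setting, reusing the explicit identifications of $\MinGrad_a$, $\costs'_{min}$, and $\costs'_{max}$ that were already computed just above \cref{C:QCQPDemandSensitivity} for the analogous demand-perturbation corollary. Since \cref{M:QCQP} is a subclass of \cref{M:CP}, \cref{Pr:CapsSensitivity} applies as is, and only the three parameters appearing in its bound need to be evaluated in the quadratic regime.

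First, I would recall that for $\outfuns_a(x) = x - r_a x^2$ the derivative $\outfuns_a'(x) = 1 - 2r_a x$ is non-increasing on $[0,\caps_a]$, so $\MinGrad_a = \outfuns_a'(\caps_a) = 1 - 2 r_a \caps_a$. Next, for $\costs_a(x) = \clin_a x + \cquad_a x^2$, the derivative $\costs_a'(x) = \clin_a + 2\cquad_a x$ is non-decreasing, giving $\costs'_{max} = \max_{a\in A}(\clin_a + 2\cquad_a \caps_a) \leq \clin_{max} + 2\cquad_{max}\caps_{max}$. These are exactly the values already tabulated in the excerpt preceding \cref{C:QCQPDemandSensitivity}, and no additional work is needed to derive them here.

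Substituting these expressions into the interval appearing in \cref{Pr:CapsSensitivity} yields, for $\delta \geq 0$,
\begin{equation*}
    \DeltaOPT{\IneqPerturb} \in \left[-\left(\clin_{max} + 2\cquad_{max}\caps_{max}\right)\delta\prod_{a \in A}\left(1 - 2 r_a u_a\right)^{-1},\ 0\right],
\end{equation*}
and symmetrically for $\delta < 0$. These are precisely the stated bounds.

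I do not foresee any real obstacle: the argument is a mechanical substitution once \cref{Pr:CapsSensitivity} is in hand. The only mild subtlety worth flagging is that the identity $\MinGrad_a = 1 - 2r_a \caps_a$ relies on $\outfuns_a'$ being monotone on $[0,\caps_a]$, which follows from concavity of $\outfuns_a$ (baked into \cref{M:QCQP} via $r_a \geq 0$); without this, the infimum defining $\MinGrad_a$ might not be attained at the boundary. Assuming the corresponding capacity-perturbed instance remains feasible, as in the hypothesis, the corollary then follows immediately.
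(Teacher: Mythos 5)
Your proposal is correct and matches the paper's own argument: the paper likewise derives $\MinGrad_a = 1-2r_a\caps_a$ and $\costs'_{max} \leq \clin_{max}+2\cquad_{max}\caps_{max}$ for the quadratic setting and obtains \cref{C:QCQPCapsSensitivity} by direct substitution into \cref{Pr:CapsSensitivity}. No gaps; the monotonicity remark about $\outfuns_a'$ is a fine (if optional) justification of the boundary evaluation already implicit in the paper.
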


\subsubsection{Finishing the proof in \Cref{SS:FeasibleEpsSolution}}

Assume $(f)$ is a \cref{M:QCQP} instance with graph $G_{(f)}=(V,A)$, and recall that given some $\varepsilon$, $(f^\varepsilon)$ denotes the corresponding $\varepsilon$-hardened instance. In \cref{SS:FeasibleEpsSolution}, all of our polynomial-time complexity bounds built on the assumption that if $(f^\varepsilon)$ is feasible, then
\begin{equation}\label{E:SlackPerturbationUB}
    \OPT[(f^\varepsilon)] - \OPT[(f)] \leq \varepsilon \cdot (\clin_{max} + 2\cquad_{max}\caps_{max}) (\abs{V} + 3\abs{A}) \prod_{a \in A}{(1 - 2r_a u_a)^{-1}}.
\end{equation}
We will now prove that \eqref{E:SlackPerturbationUB} indeed holds for all feasible $(f^\varepsilon)$. Roughly speaking, we will go about this by seeing that an $(f^{\varepsilon})$ is no harder than a specific perturbed $(f_\IneqPerturb)$ for which $\IneqPerturb$ represents an increase in $(\abs{V}+2\abs{A})\varepsilon$ of total demand.

We first construct an auxiliary graph $G'$ from $G$: for each arc $a = (u,w) \in A$, give $G'$ the nodes
\begin{align*}
    u,w &\in V, \\
    v_{uw},v_{uw}',v_{uw}'' &\notin V,
\end{align*}
and the arcs
\begin{align*}
    a^1 &:= (u,v_{uw}), \\
    a^2 &:= (v_{uw},v_{uw}'), \\
    a^3 &:= (v_{uw}',v_{uw}''), \\
    a^4 &:= (v_{uw}'',w),
\end{align*}
where
\begin{equation*}
\begin{array}{r@{\;}l}
    \costs_{a^1} &:= \costs_a, \\
    \costs_{a^2}, \costs_{a^3}, \costs_{a^4}&: x \mapsto 0, \\
    \outfuns_{a^2} &:= \outfuns_a, \\
    \outfuns_{a^1}, \outfuns_{a^3}, \outfuns_{a^4}&: x \mapsto 0, \\
    \caps_{a^1}, \caps_{a^2}, \caps_{a^3}, \caps_{a^4} &:= \caps_a.
\end{array}
\end{equation*}

\begin{figure}[H]
    \centering
    \begin{tikzpicture}
        \node[trans]        (u)        {$u$};
        \node[sink]      (v_{uw})     [right=2cm of u]   {$v_{uw}$};
        \node[sink]      (v_{uw}')     [right=2cm of v_{uw}]   {$v_{uw}'$};
        \node[sink]      (v_{uw}'')     [right=2cm of v_{uw}']   {$v_{uw}''$};
        \node[sink]      (w)     [right=2cm of v_{uw}'']   {$w$};
        
        \draw[->] (u) -- (v_{uw}) node[pos=.5,sloped,below] {$\caps_{a^1} = \caps_a$} node[pos=.5,sloped,above] {$\costs_{a^1} = \costs_a$};
        \draw[->] (v_{uw}) -- (v_{uw}') node[pos=.5,sloped,below] {$\caps_{a^2} = \caps_a$} node[pos=.5,sloped,above] {$\outfuns_{a^2} = r_a$};
        \draw[->] (v_{uw}') -- (v_{uw}'') node[pos=.5,sloped,below] {$\caps_{a^3} = \caps_a$};
        \draw[->] (v_{uw}'') -- (w) node[pos=.5,sloped,below] {$\caps_{a^4} = \caps_a$};
    \end{tikzpicture}
    \caption{The five nodes and four arcs of $G'$ corresponding to the arc $a=(u,v) \in A$.}
\end{figure}
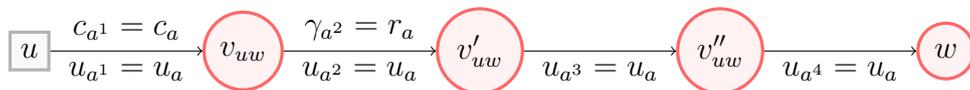

These four arcs together match the behaviour of $a$, in terms of what can be sent in from $u$ and how this maps to what comes out at $w$. Hence, the \cref{M:QCQP} instance $(f')$ corresponding to $G'$ must have the same optimal value as $(f)$. However, then $(f^{\varepsilon})$ is no harder than a perturbation of $(f')$ that adds $\varepsilon$ demand at each node of $G'$, as then
\begin{outline}
    \1 $\demands_{uw}$ captures $\displaystyle x_a + \varepsilon \leq \caps_a$,
    \1 $\demands_{uw'}$ captures $\displaystyle y_a + \varepsilon \leq x_a - r_a x_a^2$,
    \1 $\demands_{uw''}$ captures $\displaystyle y_a - \varepsilon \geq 0$,
    \1 $\demands_{w}$ captures $\displaystyle \vout{w} + \varepsilon \leq \vin{w}$.
\end{outline}

As $G'$ has $\abs{V} + 3\abs{A}$ nodes, it follows from \cref{Pr:CompositeSensitivity,C:QCQPDemandSensitivity} and the arguments used to derive these that \eqref{E:SlackPerturbationUB} holds if $(f^{\varepsilon})$ is feasible.

\chapter{Implementation}\label{Ch:Implementation} A good algorithm should ideally not just have nice theoretical bounds, but also perform well in practice. For that reason, we implemented a modification of \cref{A:FPTAS} in Python and tested its performance on various networks\footnote{The code is available at \url{https://github.com/JaHeRoth/SimplifiedDOPFSolver}}. The modification was to omit rounding from strictly feasible to feasible points, and to return vectors rather than the piecewise constant functions they represent. The graph and its modifications were implemented in NetworkX, while the convex \ac{QCQP} was formulated and solved using the gurobipy interface of Gurobi. \cref{A:Implemented} captures our implementation.

\begin{algorithm}[htbp]
\caption{Practical implementation of \cref{A:FPTAS}}\label{A:Implemented}
\KwData{NetworkX graph $G$ corresponding to an instance $(f)$ of \cref{P:ConstantOP}}
\KwResult{Almost-feasible almost-optimal $x'',y''$ of $(f)$}
    \begin{enumerate}
        \item $G' \longleftarrow$ \cref{A:SimplifyConstant} on $G$ (replace edges with arcs; add dedicated sources and sinks; split any source whose marginal cost depends on supply rate).
        \item $G'' \longleftarrow$ \cref{A:ToStatic} on $G'$ (time-expand network; split all sources at their marginal production cost function breakpoints).
        \item $x,y \longleftarrow$ Gurobi solution of the \cref{M:QCQP} instance corresponding to $G''$.
        \item $x',y' \longleftarrow$ all $x_a,y_a$ corresponding to flow at some time-step in some direction in the edges of $G$.
        \item $x'',y'' \longleftarrow$ $\forall{\text{antiparallel }a,a' \in A''}$: If $y'_{a} > x'_{a'}$: del $x'_{a'},y'_{a'}$.
    \end{enumerate}
\end{algorithm}

All execution times listed in this chapter were recorded on an ordinary HP Envy x360 laptop equipped with an 8-core AMD Ryzen 7 5800U and running in performance mode.

The first grid on which this algorithm was tested was the well-studied IEEE14 grid, as this allowed us to hopefully create a somewhat realistic instance by copying the supplies and demands from \cite{salman2022optimal}. With constant demands, hence a single time-step, our full algorithm solved this instance in 10-15ms.
\begin{figure}[htbp]
    \centering
    \includegraphics[width=0.425\linewidth]{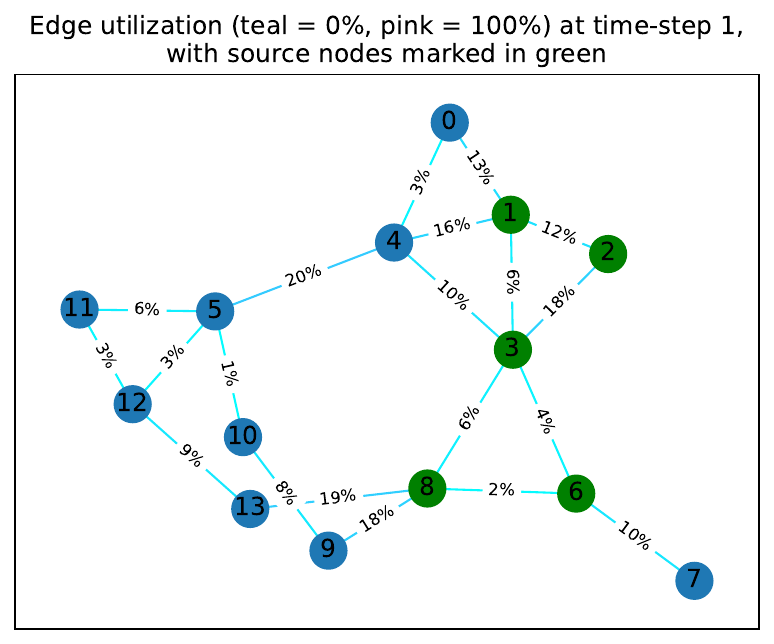}
    \caption{Optimal utilization for a reasonable \cref{P:ConstantOP} instance. Edge labels refer to how many percent of the individual edge capacities are utilized.}
\end{figure}

\Cref{T:FPTAS} does not just assume strict feasibility, but also that $\log{\frac{1}{b^{(0)}}}$ is polynomial in the instance size. We hence wanted to test how the execution time changes as we approach infeasibility. Gurobi allows its solutions to be slightly infeasible, so to capture the hardest possible case we wanted an instance that Gurobi would struggle to find such a slightly infeasible solution to. Hence, rather than trying to by hand find an instance that truly was barely feasible, we empirically found an instance which Gurobi would consider to be on the edge of infeasibility. Specifically, the edge capacities were reduced and the edge resistances increased, until Gurobi started regularly, but not always, reporting infeasibility. Rather than the execution time exploding, this just caused it to double, landing at 20-30ms.
\begin{figure}[htbp]
    \centering
    \includegraphics[width=0.425\linewidth]{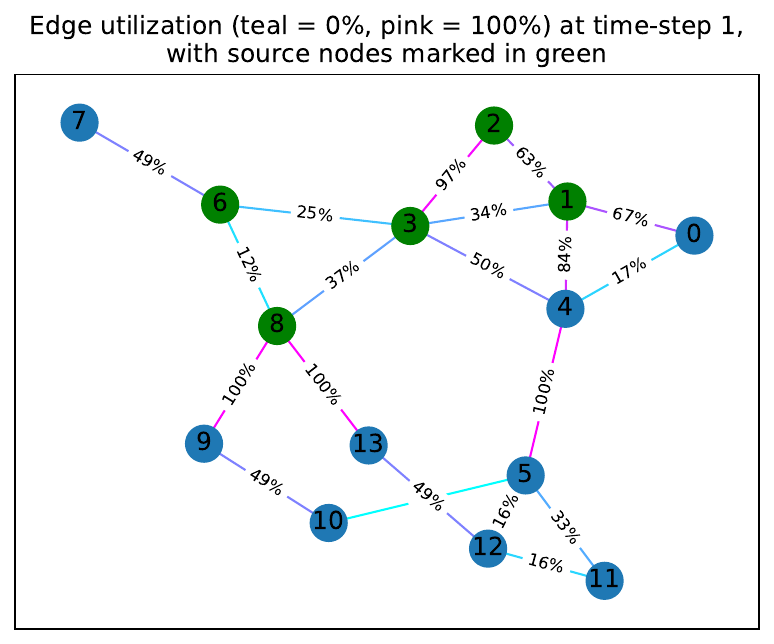}
    \caption{Optimal utilization for a \cref{P:ConstantOP} instance at the edge of Gurobi infeasibility.}
\end{figure}

Next, and possibly most importantly, we wanted to infer the relationship between execution time and graph size. The algorithm was hence benchmarked on cycle graphs, circular ladder graphs, and complete graphs of various node counts.
\begin{figure}[htbp]
    \centering
    \begin{subfigure}[t]{0.3\linewidth}
        \includegraphics[width=\linewidth]{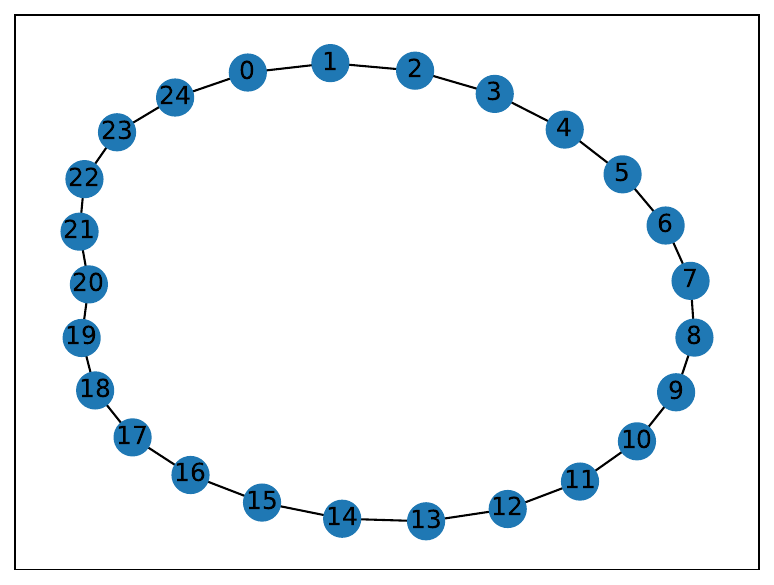}
        \caption{Cycle graph}
    \end{subfigure}
    \begin{subfigure}[t]{0.3\linewidth}
        \includegraphics[width=\linewidth]{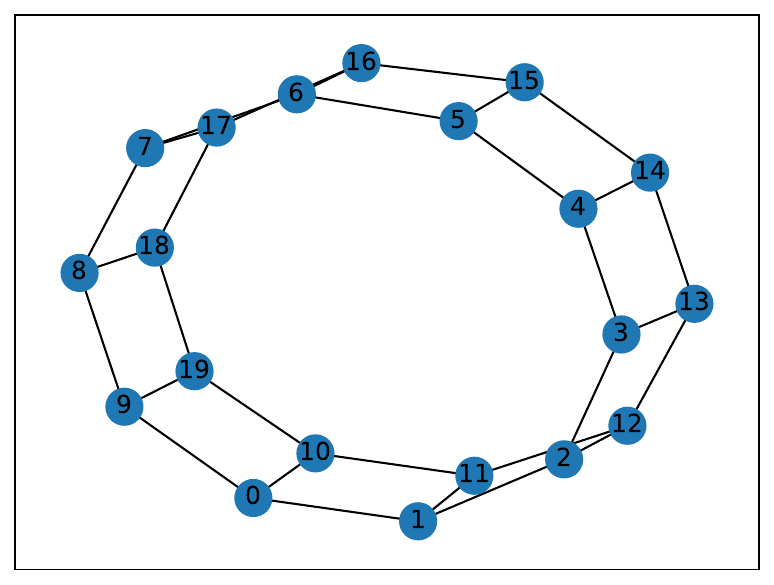}
        \caption{Circular ladder graph}
    \end{subfigure}
    \begin{subfigure}[t]{0.3\linewidth}
        \includegraphics[width=\linewidth]{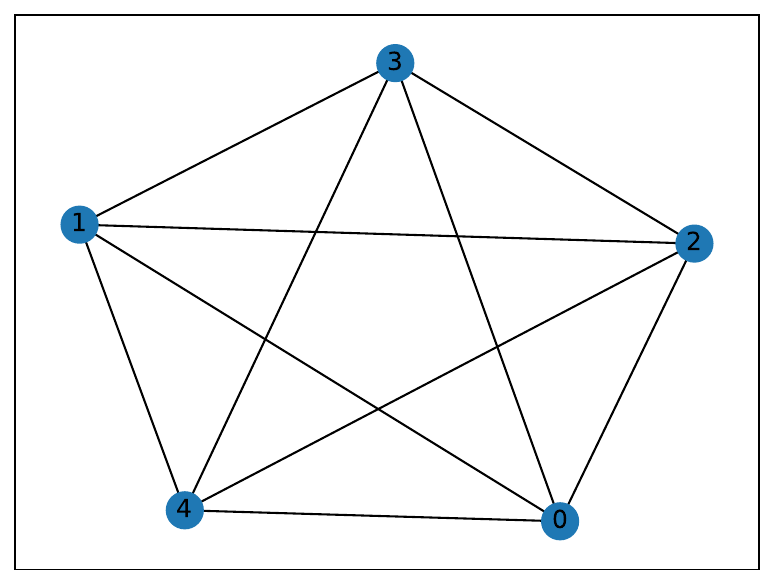}
        \caption{Complete graph}
    \end{subfigure}
\end{figure}

In each benchmark, $N$ equidistant points were chosen in $\{1,2,\ldots,n_{max}\}$, the first being $1$ and the last being $n_{max}$. For each chosen $n$, a graph of the benchmarked type and $n$ nodes was created, and the algorithm was then run $M$ times for this graph, each time with a different random set of parameters. The median of the corresponding $M$ execution times was then recorded as $e_n$. Taking the median ensured any outliers were automatically discarded, while the randomization of the parameters helped to break unrealistic symmetries for Gurobi to take advantage of. Specifically, every node $v$ was given a random demand $\dfuns_v$ with three breakpoints, a larger, random cumulative supply $\csupplies_v$, and a random marginal production cost function $\prodcpus_v$ with a single breakpoint. Every edge $e$ was moreover given a random capacity $\caps_e$ and a random resistance $r_e$. We hence ensured that the trivial 0-flow, corresponding to every node supplying its own demand, would be feasible but suboptimal, hence ensuring the instances were feasible but non-trivial.
\begin{figure}[htbp]
    \centering
    \includegraphics[width=0.425\linewidth]{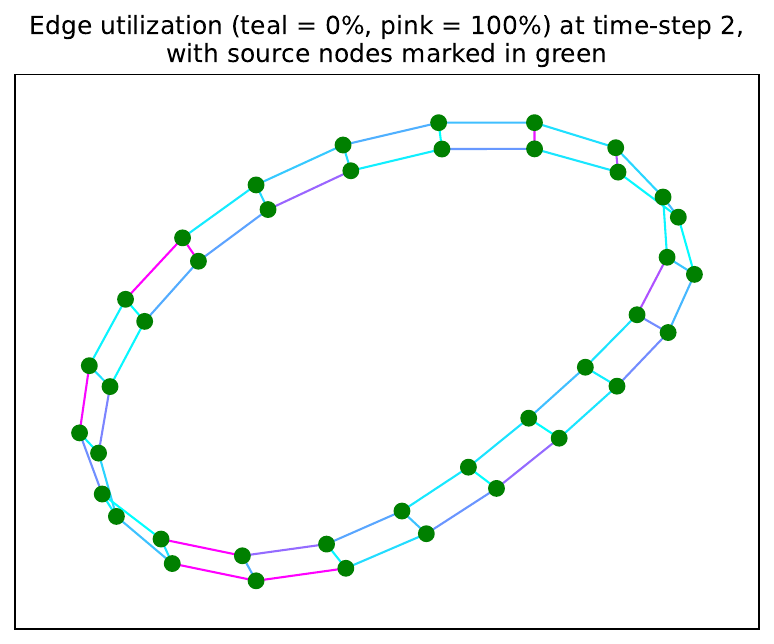}
    \caption{Optimal edge utilization for one of our randomly parameterized instance on a 200-node circular ladder graph.}
\end{figure}

Finally, to reduce the variance of the medians, any serial correlation between the $M$ measurements used to compute a median was broken by randomizing the execution order of the $N \cdot M$ total algorithm executions.

Plotting each recorded $(n,e_n)$ together with the linear and quadratic fits obtained through \ac{OLS}, we see that the execution time increases linearly or slightly quadratic in the node count for the cycle and circular ladder graphs.
\begin{figure}[htbp]
    \centering
    \begin{subfigure}[t]{0.45\linewidth}
        \includegraphics[width=\linewidth]{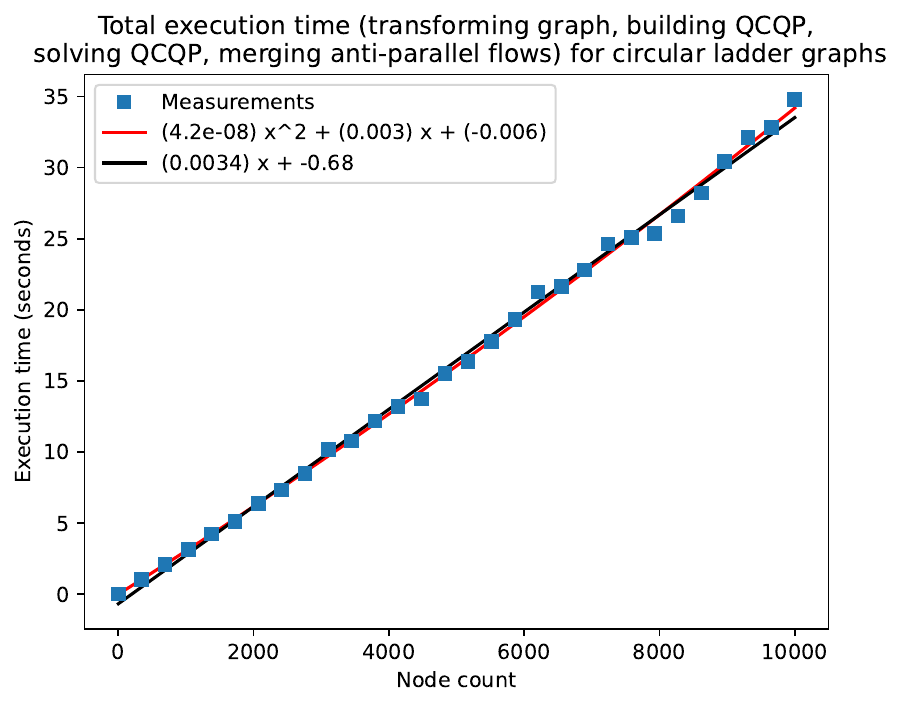}
    \end{subfigure}
    \begin{subfigure}[t]{0.45\linewidth}
        \includegraphics[width=\linewidth]{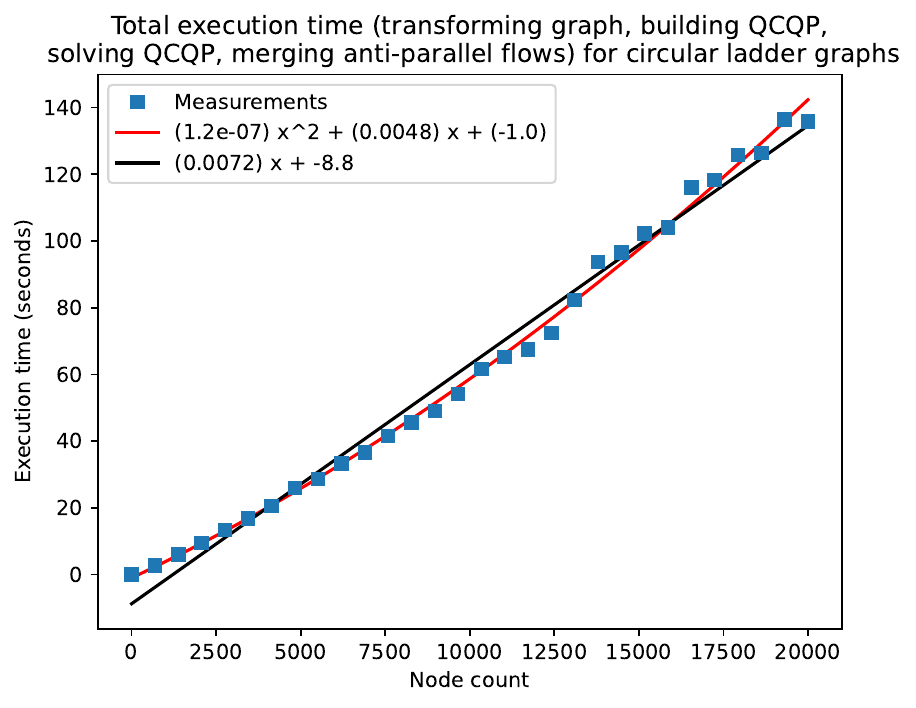}
    \end{subfigure}
    \caption{Execution time in seconds for circular ladder graphs of up to 10k and 20k nodes. In both cases $N=30 \a M=5$.}
\end{figure}
\begin{figure}[htbp]
    \centering
    \begin{subfigure}[t]{0.3\linewidth}
        \includegraphics[width=\linewidth]{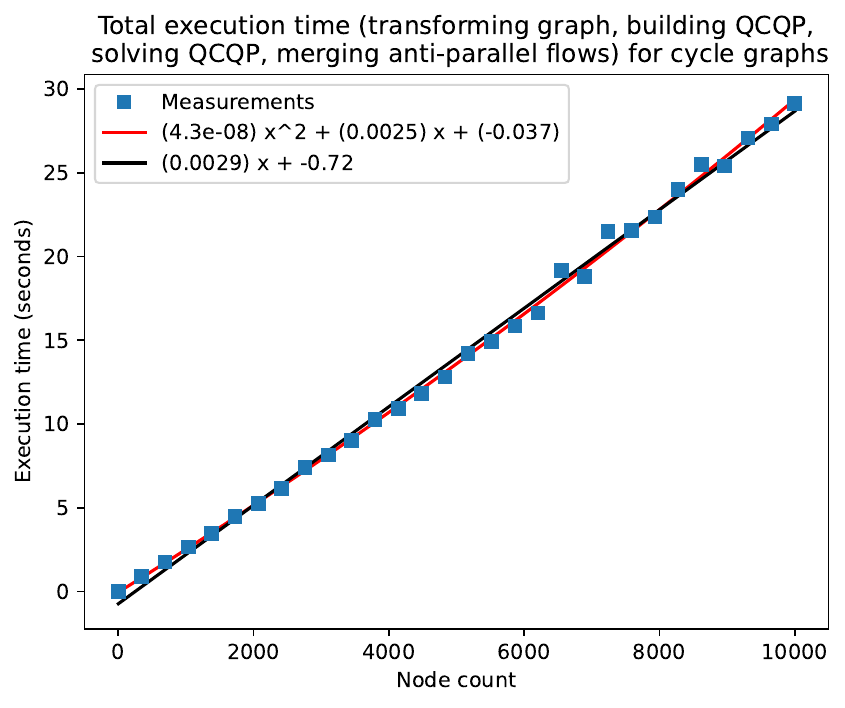}
        \caption{$N=30 \a M=5$}
    \end{subfigure}
    \begin{subfigure}[t]{0.3\linewidth}
        \includegraphics[width=\linewidth]{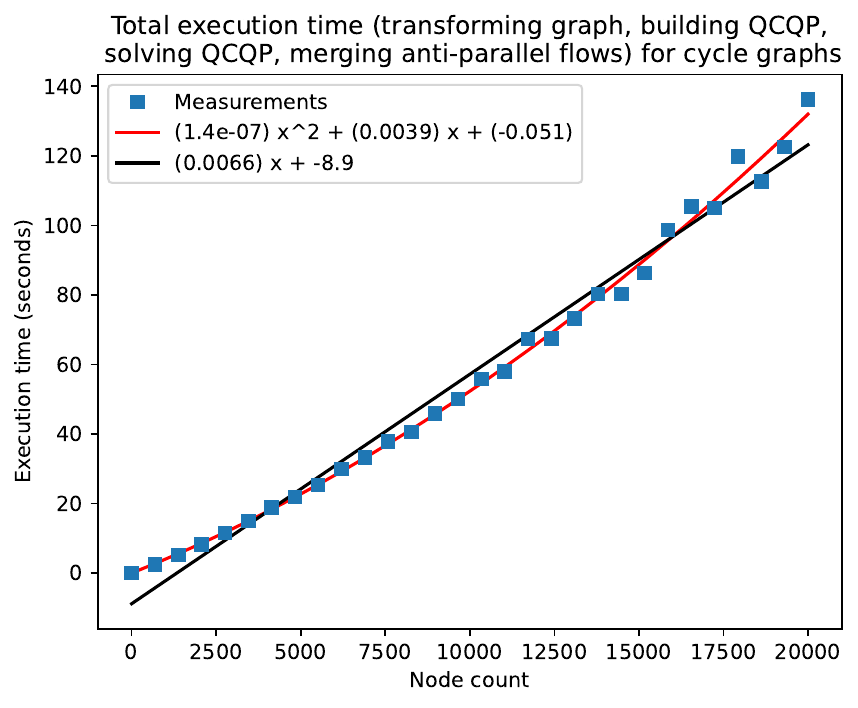}
        \caption{$N=30 \a M=5$}
    \end{subfigure}
    \begin{subfigure}[t]{0.3\linewidth}
        \includegraphics[width=\linewidth]{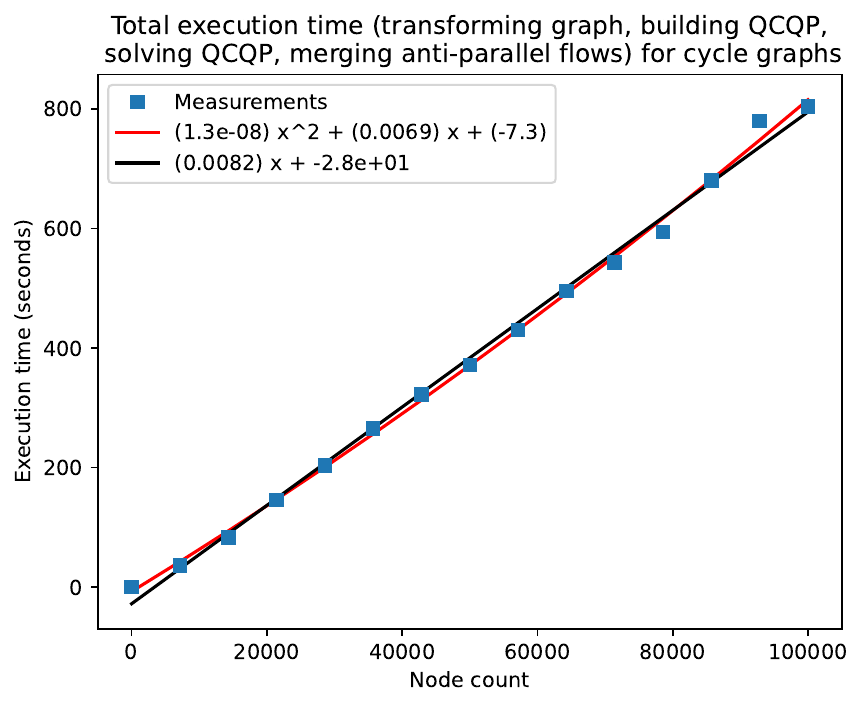}
        \caption{$N=5\a M=3$}
    \end{subfigure}
    \caption{Execution time in seconds for cycle graphs of up to 10k, 20k, and 100k nodes.}
    \label{F:CycleBenchmarks}
\end{figure}

For the complete graphs, linear and quadratic growths in the graph size correspond to quadratic and 4-th order growth in the node count, hence we replace the linear fit with a 4-th order fit. As the 4-th order fit is similar to the quadratic fit, we see indications of a quadratic growth in the node count, hence a linear growth in the graph size.
\begin{figure}[htbp]
    \centering
    \begin{subfigure}[t]{0.45\linewidth}
        \includegraphics[width=\linewidth]{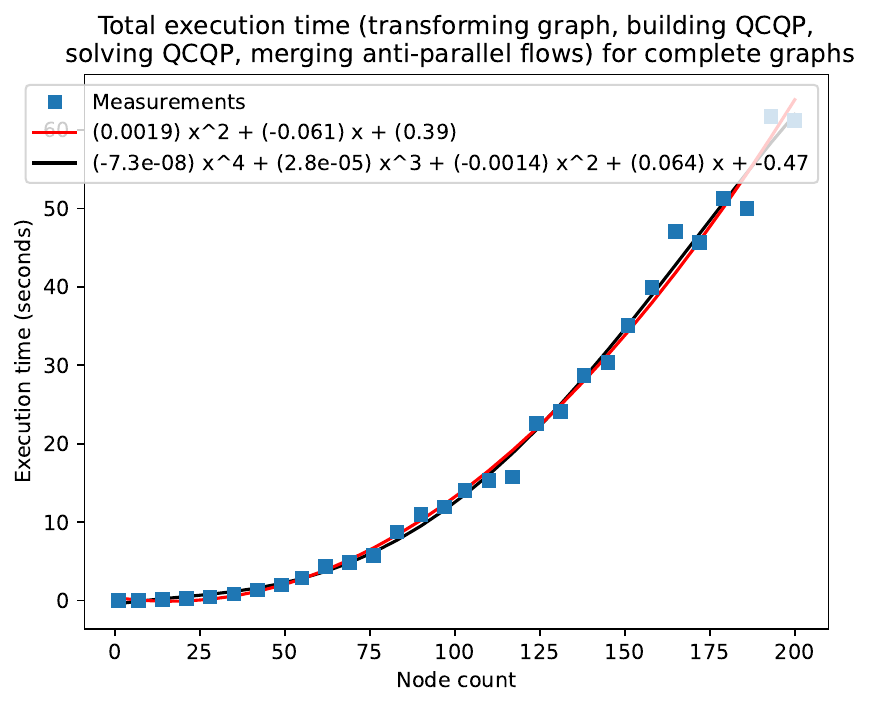}
    \end{subfigure}
    \begin{subfigure}[t]{0.45\linewidth}
        \includegraphics[width=\linewidth]{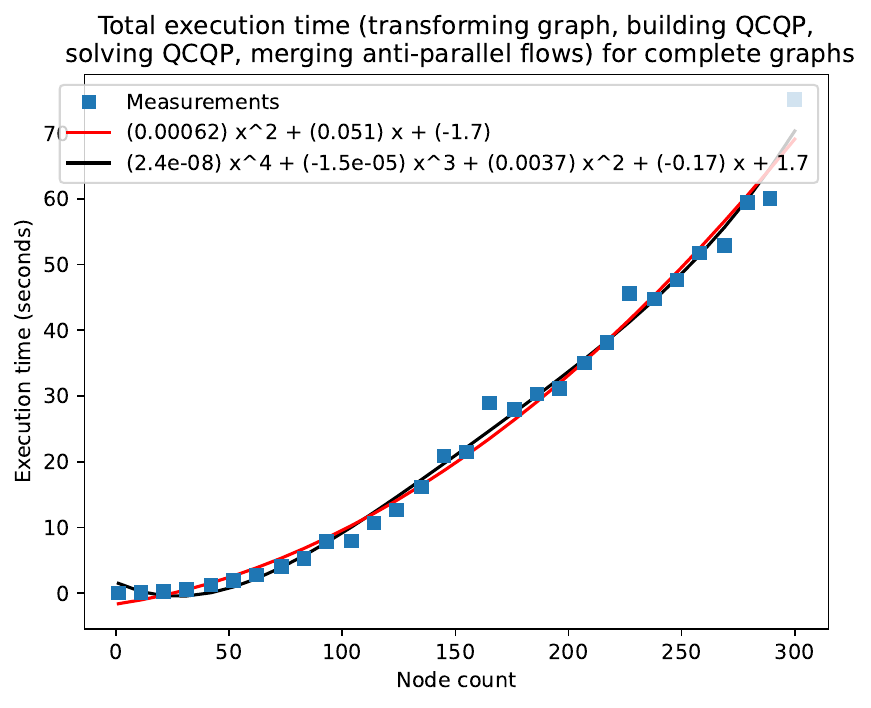}
    \end{subfigure}
    \caption{Execution time in seconds for complete graphs of up to 200 and 300 nodes. In both cases $N=30 \a M=5$.}
\end{figure}

To further investigate whether the execution time grows linearly or superlinearly in the graph size, 40 benchmarks with $n_{max}=8001$, $N=10$, and $M=3$ were run for cycle graphs, with an \ac{OLS} quadratic fit computed individually for each.
\begin{figure}[htbp]
    \centering
    \includegraphics[width=\linewidth]{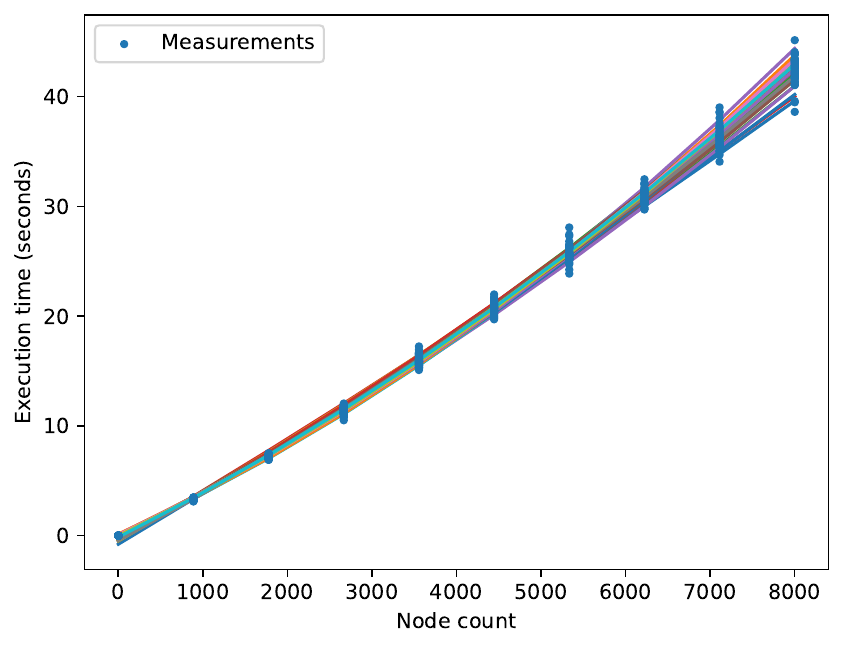}
    \caption{Scatter plot of all $40 \cdot 10 = 400$ observations, together with the 40 corresponding \acs{OLS} quadratic fits.}
    \label{F:Scatter}
\end{figure}
\begin{figure}[htbp]
    \centering
    \begin{subfigure}[t]{0.45\linewidth}
        \includegraphics[width=\linewidth]{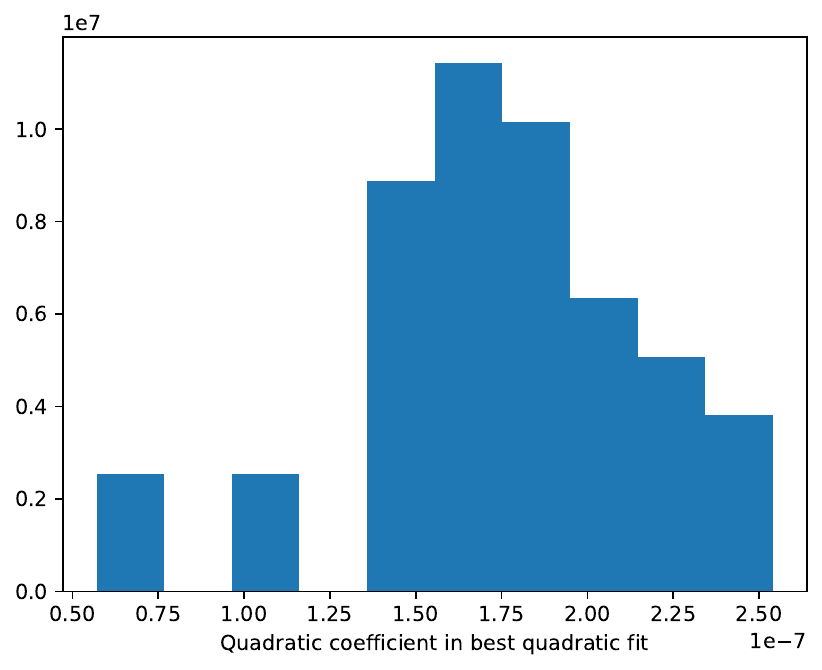}
        \caption{Distribution over these 40 benchmarks of the quadratic coefficient of the quadratic fits.}
    \end{subfigure}
    \hspace{0.5cm}
    \begin{subfigure}[t]{0.45\linewidth}
        \includegraphics[width=\linewidth]{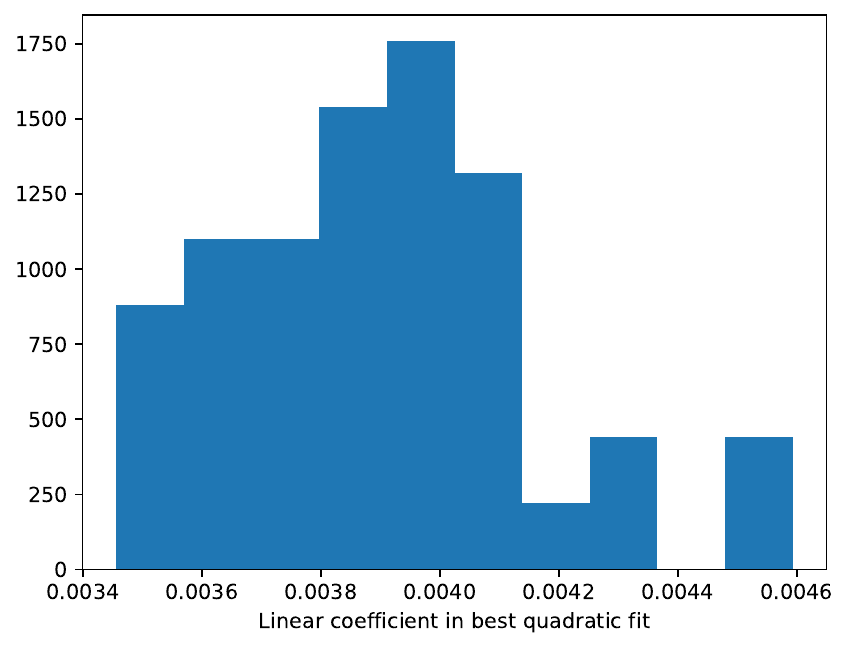}
        \caption{Distribution over these 40 benchmarks of the linear coefficient of the quadratic fits.}
    \end{subfigure}
\end{figure}
The corresponding two-sided 95\% confidence interval of the quadratic coefficient was then
\begin{equation}\label{E:ConfidenceInterval}
    [6.7 \cdot 10^{-8},\ 2.5 \cdot 10^{-7}],
\end{equation}
so a statistical test of significance would here reject the null hypothesis of a linear fit at the 0.025 level. Given how small the numbers in \eqref{E:ConfidenceInterval} are, one may be tempted to think of them as irrelevant in practice. However, at 8000 nodes this confidence interval corresponds to
\begin{equation}
    [4.3 \text{ seconds},\ 16 \text{ seconds}],
\end{equation}
which is
\begin{equation}
    [10\%,\ 38\%]
\end{equation}
of the average measurement of 42 seconds at 8001 nodes.

Alternatively, the true growth being quadratic superlinear is equivalent to the true quadratic coefficient being positive. If we for each experiment $i \in \{1,2,\ldots,40\}$ generate a decision variable $x_i$ that takes the value 1 if the quadratic coefficient is positive and 0 otherwise, then these 40 $x_i$ are i.i.d. $\Bern{\lambda}$ \acp{r.v.}, hence $s:=\sum_{i=1}^{40}$ is a $\Bin{40,\lambda}$ \ac{r.v.} With the unbiased prior $\lambda \sim \Unif{[0,1]}$, we get
\begin{equation}
    p(\lambda|s) \propto p(s|\lambda),
\end{equation}
hence $\lambda|s$ is a Beta($s+1,41-s$) \ac{r.v.} Since $s=40$, our posterior probability of the execution time growing quadratically and superlinearly in the node count for cycle graphs with less than 8000 nodes is
\begin{align}
    &p(x=1|s=40)\\
    &= \int_0^1{p(x=1|\lambda)p(\lambda|s=40)d\lambda} \\
    &= \int_0^1{\lambda p(\lambda|s=40)d\lambda}\\
    &= \int_0^1{\frac{\lambda^{41}}{\BetaConstant{41,1}}d\lambda}\\
    &= \int_0^1{\frac{\lambda^{41}}{\BetaConstant{42,1}}d\lambda} \frac{\BetaConstant{42,1}}{\BetaConstant{41,1}}\\
    &= \frac{\BetaConstant{42,1}}{\BetaConstant{41,1}}\\
    &\approx 0.98
\end{align}

We hence conclude that the growth rate in execution time for cycle graphs with less than 8000 nodes is superlinear. However, looking at Figure \ref{F:CycleBenchmarks}, we see indications that the quadratic coefficient shrinks as the span of node counts grows very large. Hence, the true growth rate in execution time may be subquadratic in $\abs{V}+\abs{E}$.

\subsubsection{Model assumptions}

In the above analysis, we implicitly assumed that \ac{OLS} gave us a best fit, hence that
\begin{equation*}
    y_i = p(x_i) + \varepsilon_i,
\end{equation*}
with $p$ some polynomial function and
\begin{equation*}
    \text{Cov}(\varepsilon) = \varepsilon\varepsilon^T = \sigma^2 I.
\end{equation*}
Both intuitively and empirically, multiplicative errors in the form of
\begin{equation*}
    y_i = p(x_i)\exp(\varepsilon_i)
\end{equation*}
seem closer to the truth. Hence, our quadratic fits were most likely not the truly best fits, as the model assumptions of \ac{OLS} were violated. However, as seen in \cref{F:Scatter}, the observed effect of variance on our observations was small compared to the observed effect of the node count. Moreover, in the above analysis our \ac{OLS} quadratic fits strongly indicated superlinearity. It is hence reasonable to assume that the truly best quadratic fits would be similar enough to the \ac{OLS} quadratic fits for us to still confidently arrive at the conclusion of superlinearity.

\chapter{Extending to arbitrary demands}\label{Ch:Extensions} We now wish to generalize beyond piecewise constant demand functions $\dfuns$. Assume $(g)$ is a \cref{P:ConstantOP} instance and $(f)$ equals $(g)$ in every way except its demands function $\dfuns^{(f)}$, which we permit to be any function such that
\begin{equation*}
    0 \leq \dfuns^{(f)} \leq \dfuns^{(g)}.
\end{equation*}

\section{Piecewise differentiable}\label{S:Differentiable}

We first restrict ourselves to every $\dfuns_\sink^{(f)}$ being differentiable with a bounded derivative:
\begin{equation*}
    \abs{\frac{d\dfuns_\sink^{(f)}(x)}{dx}} \leq \MaxDemandsSlope_\sink \in \R_{\geq 0}.
\end{equation*}

We want $\dfuns^{(g)}$ to be easily analyzable and a tight over-approximation of $\dfuns^{(f)}$. We thus split $[0,\dur]$ into $k$ equally large intervals $\TIntervals_1,\ldots,\TIntervals_k$, and change $\dfuns^{(g)}$ to
\begin{equation*}
    [0,\dur] \ni t \mapsto
    \begin{cases}
        \demands_{\sink 1} := \sup_{t \in \TIntervals_1}\dfuns_{\sink}^{(f)}(t), &\text{if } t \in \TIntervals_1 \subseteq [0,\dur], \\
        \vdots &\\
        \demands_{\sink k} := \sup_{t \in \TIntervals_k}\dfuns_{\sink}^{(f)}(t), &\text{if } t \in \TIntervals_k \subseteq [0,\dur],
    \end{cases}
    \qquad \forall{\sink \in \sinks}.
\end{equation*}
We moreover define the instance $(f')$ to be identical to $(f)$, except that
\begin{equation*}
    \dfuns^{(f')} := \dfuns^{(f)} + \MaxDemandsSlope_\sink \frac{\dur}{k}.
\end{equation*}
Then,
\begin{equation}
    \dfuns_\sink^{(f)} \leq \dfuns_\sink^{(g)} \leq \dfuns_\sink + \MaxDemandsSlope_\sink \frac{\dur}{k} = \dfuns_\sink^{(f')}, \ \forall{\sink \in \sinks}.
\end{equation}

Although the sensitivity analysis of \cref{S:Flows} was on instances of \cref{M:CP,M:QCQP}, the arguments it used applies equally well to $(f)$ and $(f')$. Hence, assume that $(f')$ is strictly feasible. Then,
\begin{equation}
    \OPT[(g)] - \OPT[(f)] \leq \OPT[(f')] - \OPT[(f)] \leq (\clin_{max} + 2\cquad_{max}\caps_{max}) \frac{\dur}{k} \left(\sum_{\sink \in \sinks}\MaxDemandsSlope_\sink\right) \prod_{a \in A}{\MinGrad_a^{-1}}.
\end{equation}
Thus, given some absolute error $\varepsilon > 0$, we can solve $(f)$ to within $\varepsilon$ by picking 
\begin{equation}\label{E:SufficientPieceCount}
    k := (\clin_{max} + 2\cquad_{max}\caps_{max}) \frac{\dur}{\varepsilon/2} \left(\sum_{\sink \in \sinks}\MaxDemandsSlope_\sink\right) \prod_{a \in A}{\MinGrad_a^{-1}}.
\end{equation}
and solving the corresponding $(g)$ to within $\varepsilon / 2$ absolute error. The latter is achieved by employing a slight modification of \cref{A:FPTAS}, where the modification is to call \cref{A:FeasibleEpsSolution} instead of \cref{A:ProgramFPTAS}. Note that the obtained $(g)$-feasible point will for $(f)$ contain waste in the sinks at all $t \in [0,\dur]$ where
\begin{equation*}
    \dfuns^{(f)}(t) \neq \dfuns^{(g)}(t).
\end{equation*}

To generalize our findings, we next loosen the restriction of differentiability to a requirement that all $\dfuns_\sink$ are piecewise differentiable. Note that we still require these derivatives to be upper-bounded by $\MaxDemandsSlope_\sink$ wherever they are defined. If $k$ and $\TIntervals$ are chosen as above, then splitting the $\TIntervals_1,\ldots,\TIntervals_k$ on the breakpoints of $\dfuns$ will cause the above analysis to hold within each differentiable piece of $\dfuns$. Hence, in this case we can get away with $k$ as determined by \eqref{E:SufficientPieceCount} plus the number of breakpoints of $\dfuns$.

\section{Piecewise convex}

We now instead restrict every $\dfuns_\sink$ to be convex.

Denote by $\PieceLengths_i$ the length of $\TIntervals_i$. Then, by convexity,
\begin{equation*}
    \sup_{t \in \TIntervals_i}\dfuns_{\sink}(t) = \max\left\{ \dfuns_{\sink}(\sum_{j \in [i-1]}\PieceLengths_j),\dfuns_{\sink}(\sum_{j \in [i]}\PieceLengths_j) \right\},
\end{equation*}
which is a convex function of $\PieceLengths=(\PieceLengths_1,\ldots,\PieceLengths_k)$. Hence, if we modify \cref{A:ToStatic,M:QCQP} accordingly, add to \cref{M:QCQP} the constraint
\begin{equation}\label{E:TotalPieceLengths}
    \sum_{i \in [k]} \PieceLengths_i = \dur,
\end{equation}
and optimize over $x,y,\PieceLengths$, then we still get a \ac{CP}. Given a fixed $k$, optimizing over this modified program means optimizing over all possible choices of $\PieceLengths$, hence giving a lower-cost point for $(f)$ than in \cref{S:Differentiable}, as there $\PieceLengths$ was fixed and optimization was only over $x,y$.

If each $\dfuns_\sink$ is instead piecewise convex, then we can manually assign the $\TIntervals_i$ to the convex pieces of $\dfuns$ and for each such piece add one equation like \eqref{E:TotalPieceLengths}. For each such equation the set we sum over in the left-hand side is made to correspond to the $\TIntervals$ that were manually assigned to this piece of $\dfuns$, while the right-hand side is set to the length of this piece of $\dfuns$. The resulting program is then once again a \ac{CP}.

\chapter{Discussion and future directions} \section{Model assumptions and comparison to DC-OPF}

In our AC-OPF simplification we made the assumptions that electric power travels instantaneously and is lost quadratically in each power line, and that we can control how it flows through the network. In practice, phase-shifting transformers allow us to control how power flows out of a node. However, as these are expensive, they are typically not installed on every node of the grid (see Figure \ref{F:PSTLocations}). Hence, assuming total control of how power flows through the network is indeed a simplification, thus likely leads our solutions to be infeasible for the actual AC-OPF problem. Moreover, our assumptions of zero transit times and quadratic losses can be seen as assuming that reactive power is negligible.
    \begin{figure}
        \centering
        \includegraphics[width=0.425\linewidth]{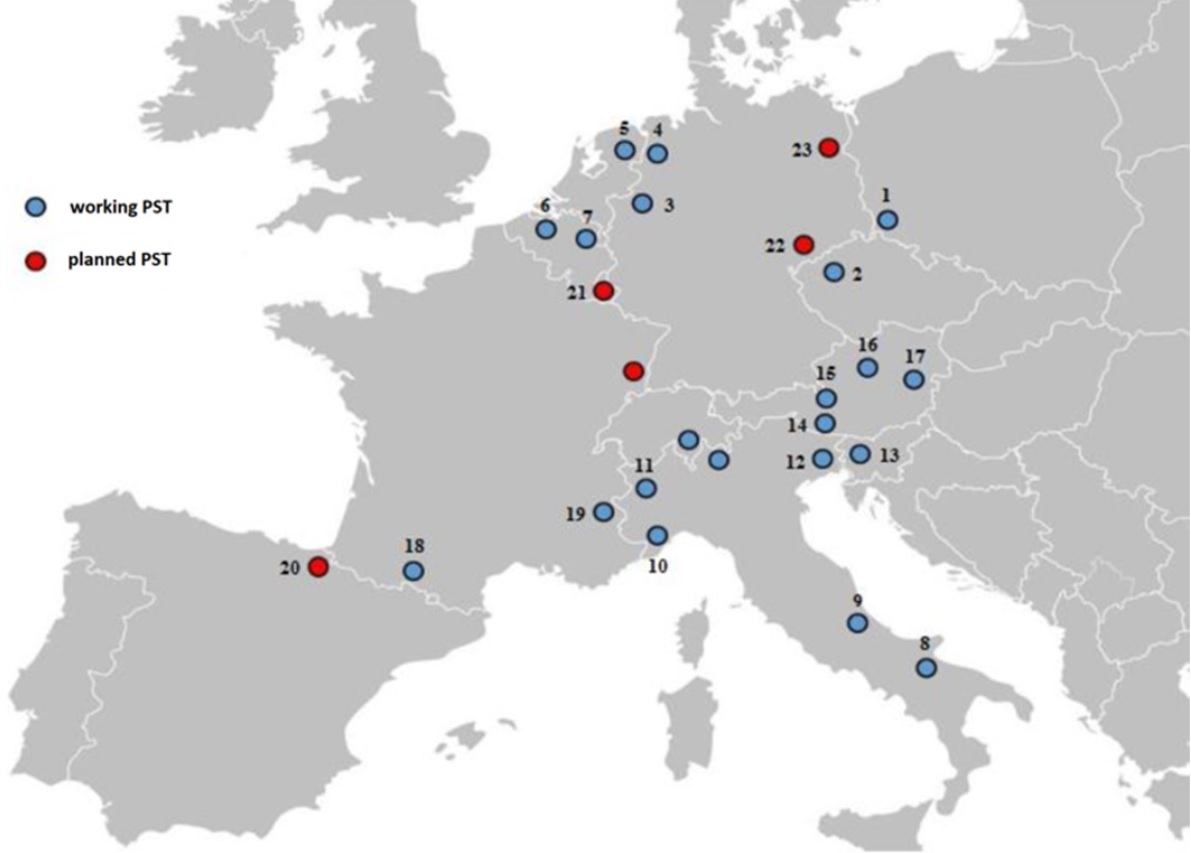}
        \caption{Locations of working and planned phase-shifting transformers in the Synchronous grid of Continental Europe. Source: \cite{zurek2023the}.}
        \label{F:PSTLocations}
    \end{figure}

A well-studied alternative simplification is the DC-OPF, where the power flow is instead determined by a set of physics equations, but under the assumptions that resistance and reactive power are zero, voltages are the same at every node, and the phase shifts between nodes are small enough to make the approximations $\sin(\theta_{im}) \approx \theta_{im}$ and $\cos(\theta_{im}) \approx 1$ \citep{baker2021solutions}. Due to its simplifications, DC-OPF is an \ac{LP} and can be solved much faster than the full AC-OPF. However, by these same assumptions and under light assumptions on the grid instance, DC-OPF will never give feasible points for the AC-OPF problem.

Our assumption of quadratic losses can be interpreted as reactive power being zero and power and current being linearly related. If we in our equations assume that every node has a transformer keeping its voltage at a constant level and that reactive power and reactance are negligible, then the power and current flowing out of that node will indeed be linearly related. Hence, the underlying assumptions of our simplification and the DC-OPF simplification are similar but different, with neither being a generalization of the other nor likely to provide feasible solutions to the corresponding AC-OPF problem. As reactance tends to be much larger than resistance in power transmission grids \citep{baker2021solutions}, the assumptions of the DC-OPF may be more realistic than our assumptions. However, by assuming zero resistance, the DC-OPF, unlike our model, may fail to punish long transmission distances, even when these are undesired in practice.

An interesting direction for future research would thus be to more rigorously compare the solution quality of our model to the solution quality of DC-OPF, by testing both on a range of instances and for each comparing their respective distances to the optimal AC-OPF solution. Hopefully this could then provide heuristics for deciding which model to use in a given instance.

\section{Algorithmic assumptions}

A caveat of \cref{A:FPTAS} is that we only know that it is an \ac{FPTAS} if $\log{\frac{1}{b^{(0)}}}$ is polynomial in the instance size. Intuitively it seems $\log{\frac{1}{b^{(0)}}}$ should always be polynomial in the instance size, but we did not prove this. Hence, one future research direction could be to either prove that $\log{\frac{1}{b^{(0)}}}$ is indeed always polynomial in the instance size, or alternatively obtain some explicit and testable assumptions on the instance that imply $\log{\frac{1}{b^{(0)}}}$ is polynomial in the instance size. Another possible research direction is to try to eliminate this dependence altogether by introducing some slack node $\slack$ with a very expensive direct arc to every node in the graph, as then $b^{(0)}$ can be made arbitrarily large by increasing the capacities on these arcs.

Finally, if the \cref{P:ConstantOP} instance is not strictly feasible, then \cref{A:FPTAS} will get stuck in Step \ref{A:FindEps} and run forever, as the \cref{A:FindEps} subroutine will never find a valid $\varepsilon'$. One interesting future research direction is thus to look into detecting infeasibility of our instance in polynomial time. An alternative direction is to instead look into exactly or approximately solving instances that are feasible but not strictly feasible.

\chapter{Summary} In this thesis, we looked at the min-cost concave dynamic generalized flow problem and its special case of the min-cost dynamic generalized flow with quadratic losses problem. To maximize our generality, we alternated between showing general results for the former (general) problem and special results for the latter (quadratic) problem. As a use case, we in \cref{Ch:Introduction} showed how this quadratic problem can model a simplified \ac{DOPF} problem.

The main contributions of this thesis were the derivation and implementation of an \ac{FPTAS} for a mild restriction of the quadratic problem. Secondary contributions were the derivation of a reduction from the general problem to a \ac{CP} and a corresponding sensitivity analysis of this \ac{CP}.

For the general problem, we in \cref{S:Reducing} first showed how it reduces to a \ac{CP} for which every optimal solution is a min-cost concave generalized flow. As a corollary of this, we showed how the quadratic problem, hence the simplified \ac{DOPF}, reduces to a convex \ac{QCQP} for which every optimal solution is a min-cost generalized flow with quadratic losses.

For our more general \ac{CP}, we then in \cref{S:Boyd} first reviewed classic \ac{CP} theory, outlining how the barrier method obtains a feasible $\varepsilon$-solution in polynomially many steps of Newton's method with backtracking line search. In \cref{S:Convex}, we further employed this \ac{CP} theory to show how local and global sensitivity analysis can be performed on the \ac{CP} given such a feasible $\varepsilon$-solution. Finally, in \cref{S:Flows}, we used the min-cost flow property of every optimal solution of our \ac{CP} to derive global sensitivity bounds explicit in the \ac{CP} instance.

For the special case of our convex \ac{QCQP}, we in \cref{S:Nesterov} paired this general sensitivity analysis with more specific convex \ac{QCQP} theory to obtain an \ac{FPTAS} under mild assumptions. Combined with the reductions of \cref{S:Reducing}, we hence arrived at an \ac{FPTAS} for a mild restriction of the min-cost dynamic generalized flow with quadratic losses problem.

A slightly modified version of this \ac{FPTAS} was then in \cref{Ch:Implementation} implemented in Python using NetworkX and gurobipy and benchmarked on cycle graphs, circular ladder graphs, and complete graphs. Through careful analysis we concluded that the execution time in practice grows slightly superlinearly and possibly subquadratically in the edge count of the input graph.

Finally, we generalized our algorithm to arbitrary demand functions, discussed how our \ac{DOPF} simplification compares to the classic direct current simplification, and pointed out some promising directions for future research.

\cleardoublepage
\phantomsection
\addcontentsline{toc}{chapter}{Bibliography}
\bibliography{bibliography}

\begin{thebibliography}{}

\bibitem[Aronson, 1989]{aronson1989survey}
Aronson, J. (1989).
\newblock A survey of dynamic network flows.
\newblock {\em Annals of Operations Research}, 20:1--66.

\bibitem[Baker, 2021]{baker2021solutions}
Baker, K. (2021).
\newblock Solutions of dc opf are never ac feasible.
\newblock In {\em Proceedings of the Twelfth ACM International Conference on Future Energy Systems}, e-Energy '21, page 264–268, New York, NY, USA. Association for Computing Machinery.

\bibitem[Bienstock and Verma, 2019]{bienstock2019strong}
Bienstock, D. and Verma, A. (2019).
\newblock Strong np-hardness of ac power flows feasibility.
\newblock {\em Operations Research Letters}, 47(6):494--501.

\bibitem[Boyd and Vandenberghe, 2004]{boyd2004convex}
Boyd, S. and Vandenberghe, L. (2004).
\newblock {\em Convex Optimization}.
\newblock {Cambridge University Press}.

\bibitem[Council, 2010]{nrc2010hidden}
Council, N.~R. (2010).
\newblock {\em Hidden Costs of Energy: Unpriced Consequences of Energy Production and Use}.
\newblock The National Academies Press, Washington, DC.

\bibitem[Gill et~al., 2014]{gill2014dynamic}
Gill, S., Kockar, I., and Ault, G.~W. (2014).
\newblock Dynamic optimal power flow for active distribution networks.
\newblock {\em IEEE Transactions on Power Systems}, 29(1):121--131.

\bibitem[Grötschel et~al., 1988]{grötschel1988geometric}
Grötschel, M., Lovász, L., and Schrijver, A. (1988).
\newblock {\em Geometric algorithms and combinatorial optimization}.
\newblock Springer.

\bibitem[Hern\'{a}ndez-Santib\'{a}\~{n}ez et~al., 2023]{hernandez2023pollution}
Hern\'{a}ndez-Santib\'{a}\~{n}ez, N., Jofr\'{e}, A., and Possama\"{\i}, D. (2023).
\newblock Pollution regulation for electricity generators in a transmission network.
\newblock {\em SIAM Journal on Control and Optimization}, 61(2):788--819.

\bibitem[IEA, 2023]{iea2023co2}
IEA (2023).
\newblock Co2 emissions in 2022 – analysis.

\bibitem[Nesterov and Nemirovskii, 1994]{nesterov1994interior}
Nesterov, Y. and Nemirovskii, A. (1994).
\newblock {\em Interior-Point Polynomial Algorithms in Convex Programming}.
\newblock Society for Industrial and Applied Mathematics.

\bibitem[Salman et~al., 2022]{salman2022optimal}
Salman, D., Kusaf, M., Elmi, Y.~K., and Almasri, A. (2022).
\newblock Optimal power systems planning for ieee-14 bus test system application.
\newblock In {\em 2022 10th International Conference on Smart Grid (icSmartGrid)}, pages 290--295.

\bibitem[von Meier, 2006]{meier2006electric}
von Meier, A. (2006).
\newblock {\em Electric Power Systems: A Conceptual Introduction}.
\newblock John Wiley \& Sons.

\bibitem[Wayne, 2002]{wayne2002polynomial}
Wayne, K.~D. (2002).
\newblock A polynomial combinatorial algorithm for generalized minimum cost flow.
\newblock {\em Mathematics of Operations Research}, 27(3):445--459.

\bibitem[Żurek and Przygrodzki, 2023]{zurek2023the}
Żurek, S. and Przygrodzki, M. (2023).
\newblock The use of a regulating transformer for shaping power flow in the power system.
\newblock {\em Energies}, 16(3):1--27.

\end{thebibliography}

\appendix
\setcounter{secnumdepth}{-1}
\chapter{Appendix}
\hypersetup{linkcolor=black}
\begin{acronym}
    \acro{QCQP}{quadratically constrained quadratic program}
\acro{CP}{convex program}
\acro{LP}{linear program}
\acro{FPTAS}{fully polynomial-time approximation scheme}
\acro{BFS}{breadth-first search}
\acro{DOPF}{dynamic optimal power flow}
\acro{UPGG}{undirected power grid graph}
\acro{SPGG}{simplified power grid graph}
\acro{GPGG}{generalized power grid graph}
\acro{r.v.}{random variable}
\acro{MCDGFWQL}{min-cost dynamic generalized flow with quadratic losses}
\acro{MCCDGF}{min-cost concave dynamic generalized flow}
\acro{OLS}{ordinary least squares}

\end{acronym}
\hypersetup{linkcolor=mycolor}

\definecolor{lg}{HTML}{eeeeee}

\begin{table}[htbp]
    \centering
    \caption{Standard notation.}
    \begin{tabular}{@{}ll@{}}
        \toprule
        Symbol & Description \\
        \midrule
        $\domain{f}$    &    The domain of the function $f$. \\
        $\image{f}$    &    The image of the function $f$. \\
        $\interior{Q}$    &    The interior of the set $Q$. \\
        
        \rowcolor{lg} $\Bern{p}$    &    A Bernoulli distribution with success probability $p$. \\
        \rowcolor{lg} $\Bin{n,p}$    &    A Binomial distribution with $n$ trials and $p$ success probability. \\
        \rowcolor{lg} $\Unif{Q}$    &    A Uniform distribution over the set $Q$. \\
        \rowcolor{lg} $\Beta{\alpha,\beta}$    &    A Beta distribution with shape parameters $\alpha,\beta$. \\
        \rowcolor{lg} $\BetaConstant{\alpha,\beta}$    &    A scaling factor used for the corresponding Beta distribution. \\

        $\N$    &    The set of all natural numbers: $\{0,1,2,\ldots\}$. \\
        $\R$    &    The set of all real numbers. \\
        $\extR$    &    $\R \cup \{-\infty,\infty\}$ \\
        $\Id$    &    The identity function $x \mapsto x$ \\

        \rowcolor{lg} $\abs{q}$    &    The absolute value of the scalar $q$. \\
        \rowcolor{lg} $\abs{I}$    &    The length of the interval $I$. \\
        \rowcolor{lg} $\abs{Q}$    &    The (finite) cardinality of the set Q. \\
        \rowcolor{lg} $\norm{w}$    &    The norm of the vector $w$. \\
        \rowcolor{lg} $w_{max}$    &    The largest entry of the vector $w$. \\
        \bottomrule
    \end{tabular}
\end{table}

\begin{table}[htbp]
    \centering
    \caption{Custom, recurring notation.}
    \begin{tabular}{@{}ll@{}}
        \toprule
        Symbol & Description \\
        \midrule
        $\sinks$    &    The nodes in our graph where flow can be absorbed (sinks). \\
        $\dur$    &    The width of the time span we optimize over: $[0,\dur]$. \\
        $\csupplies_s$    &    The maximum cumulative production in source $s$. \\
        $\dfuns_\sink$    &    A function mapping time $t \in [0,\dur]$ to the demand at sink $\sink$. \\
        $\demands$    &    The actual demand values found in the constant pieces of $\dfuns$. \\
        $\TIntervals$    &    The (maximal) pieces of $[0,T]$ on which $\dfuns$ is constant. \\
        $\prodcpus_s$    &    A function mapping production at source $s$ to marginal costs at $s$. \\
        $\supplies_s$    &    The maximum production (rate) at source $s$. \\
        $\caps_a$    &    The maximum flow capacity of arc $a$. \\
        $\outfuns_a$    &    The function mapping incoming to outgoing flow for arc $a$. \\
        
        \rowcolor{lg} $\InArcs{v}$    &    All the incoming arcs of node $v$. \\
        \rowcolor{lg} $\OutArcs{v}$    &    All the outgoing arcs of node $v$. \\
        \rowcolor{lg} $\vin{v}$    &    The flow coming into node $v$. \\
        \rowcolor{lg} $\vout{v}$    &    The flow going out of node $v$. \\
        
        $\bits{(f)}$    &    The number of bits required to encode the instance $(f)$. \\
        $\InsMap$    &    See \cref{D:Reduction}. \\
        $\SolMap$    &    See \cref{D:Reduction}. \\
        
        \rowcolor{lg} $\val[(f)]{z}$    &    The value of $z$ according to the objective function of $(f)$. \\
        \rowcolor{lg} $\OPT[(f)]$    &    The optimal value of the instance $(f)$. \\
        \rowcolor{lg} $\FunOPT(\omega,\rho)$    &    The optimal value of the $(\omega,\rho)$-perturbed $(f_{(\omega,\rho)})$. \\
        \rowcolor{lg} $\DeltaOPT{\omega,\rho}$    &    The increase in optimal value incurred by $(\omega,\rho)$-perturbing $(f)$. \\
        
        $\ssource$    &    The singular flow source in our \acs{CP}. See \cref{A:ToStatic}. \\
        $\costs_a$    &    The cost function of arc $a$ in our \acs{CP}. See \cref{A:ToStatic}. \\
        $\clin_a$    &    The linear coefficient of $\costs_a$ in our \acs{QCQP}. See \cref{M:QCQP}. \\
        $\cquad_a$    &    The quadratic coefficient of $\costs_a$ in our \acs{QCQP}. See \cref{M:QCQP}. \\
        $\outscaler_a$    &    linearly scales the flow on arc $a$ in our \acs{QCQP}. See \cref{M:QCQP}. \\
        \bottomrule
    \end{tabular}
\end{table}

\end{document}